\documentclass{mcom-l}

\usepackage{amssymb}

\usepackage{graphicx}

\usepackage[cmtip,all]{xy}
\usepackage{amsmath}   

\usepackage{mathtools}

\theoremstyle{plain}
\newtheorem{thm}{Theorem}[section]

\newtheorem{prop}[thm]{Proposition}

\newtheorem{assump}[thm]{Assumption}

\newtheorem{Lemma}[thm]{Lemma} 
\theoremstyle{definition}
\newtheorem{defn}[thm]{Definition}
\newtheorem{rem}[thm]{Remark}
\usepackage{graphicx}
\usepackage{float}       
\usepackage{booktabs}    
\usepackage{multirow}    
\usepackage{xcolor}
\usepackage{subcaption}  
\usepackage{comment}
\usepackage{algorithm}
\usepackage{algorithmicx}
\usepackage{algpseudocode}
\usepackage{listings}    

\theoremstyle{remark}
\usepackage{hyperref}
\hypersetup{
    colorlinks=true,
    linkcolor=blue,
    citecolor=blue,
    urlcolor=blue
}
\usepackage{cleveref} 
\usepackage{enumitem}
\setlist[enumerate,1]{label=(\roman*)} 
\usepackage{url}
\usepackage{comment}
\numberwithin{equation}{section}
\numberwithin{equation}{section}
\numberwithin{algorithm}{section}
\usepackage{bm}

\newcommand{\R}{\mathbb{R}} 

\newcommand{\dom}{\text{dom}}

\newcommand{\E}{\mathbb{E}}

\newcommand{\Expect}{\mathbb{E}}

\newcommand{\prox}{\text{prox}}

\newcommand{\env}{\operatorname{env}}

\newcommand{\norm}[1]{\left\| #1 \right\|}

\newcommand{\romannum}[1]{\uppercase\expandafter{\romannumeral #1\relax}}

\usepackage{amssymb}
\usepackage{amsmath}

\begin{document}

\title{A Stochastic Implicit  Proximal Point Algorithm for Solving Linearly Constrained Stochastic Minimax Problems}
\author{Kehan Zhu}
\address{School of Mathematical Sciences, Beijing University of Posts and Telecommunications, Beijing, China}
\email{zhukehan@bupt.edu.cn}

\author{Jiani Wang}
\address{School of Mathematical Sciences \& Key Laboratory of Mathematics and Information Networks, Beijing University of Posts and Telecommunications, Ministry of Education, Beijing, China}
\email{wjiani@bupt.edu.cn}
\thanks{This work was supported by the National Natural Science Foundation of China (No. 12401400), Scientific Research Startup Fund of Beijing University of Posts and Telecommunications (No. 510224054) and the State Key Laboratory of Scientific and Engineering Computing, Chinese Academy of Sciences.}

\author{Yu-Hong Dai}
\address{LSEC, ICMSEC, Academy of Mathematics and Systems Science, Chinese Academy of Sciences, Beijing, China}
\email{dyh@lsec.cc.ac.cn}
\thanks{This work was supported by National Key R\&D Program of China (2022YFA1004000), the National Natural Science Foundation of China (No. 12331011).}

\subjclass[2010]{Primary 65K05, 90C30; Secondary 90C06}

\date{\today}


\begin{abstract}
This paper presents a novel approach to solving large-scale minimax problems with nonsmooth regularizers. We propose a stochastic implicit  proximal point algorithm with variance reduction techniques where stochastic oracles are selected in two cases---with or without replacement. The semismooth Newton methods with Armijo line search is used to solve the implicit proximal point update subproblem in each iteration. The algorithm efficiently handles the strongly-convex-strongly-concave objective function with nonsmooth regularizers and coupling linear equations, which is proved to exhibit global q-linear convergence of the iterations to the saddle point and global r-linearly convergence of the multipliers to the multiplier set in expectation. Numerical experiments on machine learning problems demonstrate the superiority of the proposed method over state-of-the-art algorithms in terms of both computational efficiency and selection of the step sizes.
\end{abstract}
\keywords{
minimax problem; semismooth newton; stochastic proximal point method; variance reduction}

\maketitle


\section{Introduction}
\label{sec:intro}

\subsection{Problem setting}
Linearly constrained optimization problems arise in numerous scientific and 
engineering applications including signal processing \cite{lai2013augmented} and distributed optimization \cite{ling2015dlm}.
 Many studies\cite{han2018linear,powell1993number}, have developed the optimality theory and numerical algorithms for
  the linear constrained minimization problem, which is one of the most important constraint problems.
   In this paper, we consider the following linear constrained minimax problem 
\begin{equation}
    \begin{aligned}
    &\min_{x \in \Re^n} \max_{y \in \Re^m} \quad \varphi(x)+g(x)+f(x,y)-h(y)-\psi(y)\\
    &\text{subject}  ~\text{to} \quad Ax+By+c=0,
    \end{aligned}
    \label{problem1}
    \end{equation}
where
\[g(x)=\frac{1}{N} \sum_{i=1}^{N}g_i(x),\quad h(y)=\frac{1}{N} \sum_{i=1}^{N}h_i(y),\quad f(x,y)=\frac{1}{N}\sum_{i=1}^{N}f_i(x,y)\]
and  $g_i:\Re^n \to \Re,h_i:\Re^m \to \Re,f_i:\Re^n \times \Re^m \to \Re $ are  continuously differentiable and smooth functions, 
and $\varphi:\Re^n \to \Re,\psi:\Re^m \to \Re$ are extended real-valued proper lower semicontinuous convex functions. 
$A \in \Re ^{q \times n}, B \in \Re ^{q\times m},$ and $~c \in \Re^q.$

\subsection{Applications}

\subsubsection{Adversarial attacks in network flow problems}\label{subsec:adversarial_attacks_network_flow}
In a flow network, a user routes flow $\mathbf{x}$ from source to sink at minimum cost, while an adversary injects flow $\mathbf{y}$ to maximize the user's expected cost under uncertain edge costs. The stochastic minimax formulation is
\[
\max_{\mathbf{y} \in \mathcal{Y}} \min_{\mathbf{x} \in \mathcal{X}} \ \mathbb{E}[q(\mathbf{x}, \mathbf{y})],
\]
where $\mathcal{X}$ and $\mathcal{Y}$ are flow polyhedra with capacity and conservation constraints. This extends deterministic network interdiction models \cite{smith2013modern,fu2019network,salmeron2004analysis} to stochastic settings, applicable to communication or power networks with cost fluctuations.

\subsubsection{Linear regression}

We focus on the linear regression problem with smoothed $L_1$ regularization. Following the derivation in \cite{du2019linear}, this problem is fundamentally reformulated as a convex-concave saddle point problem. 

The primal problem is given by $\min_{x} \frac{1}{2n} \|Ax - b\|^2 + \lambda R_a(x)$. By utilizing the convex conjugate of the quadratic loss function, \cite{du2019linear} transforms this into a minimax structure
\begin{equation} \label{eq:du_formulation}
    \min_{x \in \mathbb{R}^d} \max_{y \in \mathbb{R}^n} \quad \lambda R_a(x) + \frac{1}{n} y^\top A x - \left( \frac{1}{2n}\|y\|^2 + \frac{1}{n}b^\top y \right).
\end{equation}
Here, the primal function $f(x) = \lambda R_a(x)$ is smooth and convex. We extend the formulation of \cite{du2019linear} by incorporating joint linear constraints, as studied in \cite{dai2024optimality}. The resulting linearly constrained minimax problem is
\begin{equation} \label{eq:constrained_minimax}
\begin{aligned}
    \min_{x \in \mathbb{R}^d} \max_{y \in \mathbb{R}^n} \quad & \lambda R_a(x) + \frac{1}{n} y^\top A x - \left( \frac{1}{2n}\|y\|^2 + \frac{1}{n}b^\top y \right) \\
    \textrm{subject to} \quad & \mathcal{C}x + \mathcal{D}y + e = 0,
\end{aligned}
\end{equation}
where $\mathcal{C}$ and $\mathcal{D}$ are constraint matrices.

\subsection{Related work}
Numerous algorithmic approaches have been developed for unconstrained stochastic minimax problems, spanning both zero-order methods \cite{akhavan2021distributed,dvinskikh2022gradient,xu2021zeroth,xu2023unified} and first-order strategies \cite{huang2022new,salmeron2004analysis,luo2021near,xian2021faster}. From a theoretical perspective, ~\cite{shapiro2002minimax} established an equivalence between minimax stochastic programs and expected value problems under worst-case distributions, thereby bridging robust optimization with Bayesian methodologies. \cite{nemirovski2009robust} studied a stochastic primal-dual method, which achieves a convergence rate of $O(1/\sqrt{T})$ for the duality gap in general convex-concave settings. The convergence rate can be improved by structural properties such as smoothness in certain component functions \cite{juditsky2011solving,zhao2022accelerated} or a bilinear objective structure \cite{chen2014optimal,dang2014randomized}. For instance, \cite{zhao2022accelerated} investigates a class of objectives of the form $f(x) + g(x) + \phi(x,y) - J(y)$, where smoothness is assumed for $f$ and $\phi$, and strong convexity is imposed on $f$ when applicable, leading to optimal or near-optimal complexity bounds for a stochastic primal-dual hybrid algorithm. Our work extends this research direction by considering constrained stochastic min-max problems with strongly convex–strongly concave composite objectives.
In the context of constrained optimization, many studies focus on the case with separable constraints on $x$ and $y$ \cite{chambolle2011first,arrow1958studies,chambolle2016ergodic,lin2020near,chambolle2016introduction,nemirovski2004prox}. \cite{madsen1978linearly} pioneered an early and influential feasible-point method for the linearly constrained nonlinear minimax problem. A notable advance was made by \cite{dai2024optimality}, who proposed the PGmsAD method, attaining the iteration complexity of $\mathcal{O}(\epsilon^{-2} \log \epsilon^{-1})$ for nonsmooth problems with linearly coupled constraints of the form $Ax + By + c = 0$. However, their approach is largely confined to linear coupling terms and does not extend to more general forms of variable interaction. Theoretically, Tsaknakis et al.~\cite{tsaknakis2023minimax} provided a rigorous duality analysis for linearly constrained minimax problems, demonstrating the NP-hardness and the failure of the classical max-min inequality even under strong-convexity–strong-concavity. Moreover, deterministic methods often encounter computational limitations in large-scale settings, motivating the development of stochastic alternatives.
Noted that deterministic proximal point frameworks are widely used for solving nonsmooth optimization problems, we introduce a stochastic proximal point algorithm framework for coupling linear constrained minimax optimization. For solving the nonsmooth nonlinear equations in the subproblems, the semismooth Newton  method is used inspired by \cite{milzarek2024semismooth}.

\subsection{Contributions}
Our main contributions addressed in this article are as follows.

Firstly, inspired by the implicit updates widely used in noisy or ill-conditioned settings, we develop a implicit stochastic proximal point method for the nonsmooth minimax problem \eqref{problem1}. Stochastic sampling, including both with-replacement and without-replacement schemes, constructs unbiased first-order oracle estimates of the gradient, which are then integrated with SVRG-style variance reduction in stochastic proximal point algorithmic framework. In contrast to prior works \cite{dai2024optimality} and \cite{tsaknakis2023minimax} that predominantly focus on deterministic settings, our implicit stochastic  method offers superior stability and robustness.

Secondly, to accelerate the solution of subproblems arising from the implicit updates, we study a semismooth Newton method with Armijo line search, which achieves fast local convergence in the inner iterations. We establish a worst-case computational complexity bound of $\mathcal{O}(\log(1/\varepsilon))$ Newton steps in subproblem to reach an $\varepsilon$-accurate solution. 

Thirdly, we prove global linear convergence in the strongly convex case. Under strong convexity–strong convexity–concavity assumptions, we prove that the variables $(x, y)$ achieve global q-linear convergence, while the dual variable $\lambda$ associated with the linear constraints attains r-linear convergence, provided with constant step sizes and the subproblems solved to sufficient accuracy.

Finally, the stochastic proximal point method demonstrates broad applications across constrained linear regression, network interdiction, and so on. By establishing the equivalence between the coupling linear constrained minimax optimization and the unconstrained minimax optimization, the algorithm serves as a general solver that requires no problem-specific structures. Numerical experiments confirm the method more favorable performance and enhanced robustness in practical cases.

\subsection{Orgnization}
The paper is organized as follows. Section \ref{sec:2} formulates the problem and proposes the stochastic semismooth Newton proximal point for minimax (SNmMSPP)  framework with variance reduction. Section \ref{Sloving_subproblem} develops a deterministic semismooth Newton method to efficiently solve the subproblems. Section \ref{sec:controllinexact} addresses error control and inner loop complexity. Section \ref{sec:convergence_analysis} establishes global $q$-linear convergence for primal iterates and $r$-linear convergence for multipliers. Finally, Section \ref{sec:num_experiments} demonstrates the algorithm's efficiency on linear regression and adversarial network flow problems.

\subsection{Preliminaries}
For $ N \in \mathbb{N} $, we set $ [N] := \{1, \ldots, N\} $ and denote by $ I \in \R^{n \times n} $ the identity matrix. By $ \langle \cdot, \cdot \rangle $ and $ \| \cdot \| $, we denote the standard Euclidean inner product and norm.
Let $B_\epsilon(x)$ denote the closed ball of radius $\epsilon > 0$ centered at $x$. For any $x \in \R^{n}$, $\nabla h(x)$ and $\partial g(x)$ represent the gradient of the smooth function $h$ at $x$ and the subderivative of the nonsmooth function $g$ at $x$, respectively. The conjugate function of $f : \R^{n} \rightarrow \R^{m}$ is defined as $f^{*} : \R^{m} \rightarrow \R^{n}$. $\operatorname{dom} h$ denotes the domain of the proper lower semicontinuous function $h$. $\operatorname{dist}(x, C)$ represents the distance from the point $x \in \R^{n}$ to the set $C \subseteq \R^{n}$.
The function $ f : \R^{n} \to (-\infty, \infty] $ is called \emph{$ \rho $-weakly convex}, $ \rho > 0 $, if the mapping $ x \mapsto f(x) + \frac{\rho}{2} \| x \|^{2} $ is convex. Furthermore, $ f $ is called \emph{$ \mu $-strongly convex}, $ \mu > 0 $, if $ f - \frac{\mu}{2} \| \cdot \|^{2} $ is a convex function. The set $ \operatorname{dom}(f) := \{ x \in \R^{n} : f(x) < +\infty \} $ denotes the effective domain of $ f $.

Given a function \( g:\R^{n}\rightarrow(-\infty,\infty] \), the \textbf{proximal mapping} of g is the operator given by
\[ 
\prox_{g}(x):=\underset{z\in\R^{n}}{\arg\min}\, \left\{g(z)+\frac{1}{2}\|x-z\|^{2}\right\}.
\]
given a peoper closed convex function $f:\R^n \to (-\infty,+\infty]$ and $\lambda > 0,$ the \textbf{Moreau envelope} of f is the 
function
\begin{equation*}
\env_{g}^{\lambda}:\R^{n}\rightarrow\R,\quad \env_{g}^{\lambda}(x):=\min_{z\in\R^{n}}
\left\{g(z)+\frac{1}{2\lambda}\|x-z\|^{2}\right\}. 
\end{equation*}
In particular, \( \prox_{g} \) is Lipschitz continuous with constant 1. Moreover, by \cite[Thm. 6.60]{beck2017first} the Moreau envelope is continuously differentiable with
\begin{equation*}
\nabla\env_{g}(x)=x-\prox_{g}(x). 
\end{equation*}
For a function \( g:\R^{n}\rightarrow(-\infty,\infty] \), the conjugate of \( g \) is defined by \( g^{*}:\R^{n}\rightarrow(-\infty,\infty] \), \( g^{*}(x):=\sup_{z\in\R^{n}}\,\langle z,x\rangle-g(z) \).
\begin{prop} \cite[prop~ 2.2]{milzarek2024semismooth},
Let \( g:\R^{n}\rightarrow(-\infty,\infty] \) be proper and closed. If \( g \) is \( \mu \)-strongly convex, then its conjugate \( g^{*} \) is closed, convex, proper, and Fréchet differentiable and its gradient is given by \( \nabla g^{*}(x)=\arg\max_{z\in\mathbb{R}^{n}}\langle z,x\rangle-g(z) \). In addition, \( \nabla g^{*}:\R^{n}\rightarrow \R^{n} \) is Lipschitz continuous with Lipschitz constant \( \mu^{-1} \).
\end{prop}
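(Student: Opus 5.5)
We prove the proposition directly from the definition of the conjugate, using only elementary convex analysis. We may assume \( g \) is proper, closed and \( \mu \)-strongly convex. The plan is: (a) show that for every \( x \) the supremum defining \( g^{*}(x) \) is attained at a unique point \( z(x) \), which gives finiteness and properness; (b) show that \( z(\cdot) \) is \( \mu^{-1} \)-Lipschitz; (c) identify \( \partial g^{*}(x)=\{z(x)\} \), which yields Fréchet differentiability with \( \nabla g^{*}=z(\cdot) \).

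\textbf{Step (a).} Closedness and convexity of \( g^{*} \) are immediate, since \( g^{*} \) is a pointwise supremum of the affine functions \( x\mapsto\langle z,x\rangle-g(z) \) over \( z\in\operatorname{dom} g \neq\emptyset \). For fixed \( x \), the function \( z\mapsto g(z)-\langle z,x\rangle \) is proper, closed and \( \mu \)-strongly convex. A proper closed convex function has an affine minorant, so adding \( \frac{\mu}{2}\|z\|^2 \) makes this function coercive. Hence it has a minimizer, and strong convexity makes the minimizer unique; call it \( z(x) \). Therefore \( g^{*}(x) \) is finite for every \( x \), so \( g^{*} \) is proper with \( \operatorname{dom} g^{*}=\R^n \).

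\textbf{Step (b).} The optimality condition for \( z(x) \) is \( x\in\partial g(z(x)) \). Strong convexity of \( g \) makes \( \partial g \) strongly monotone:
\[
\langle u-v,\,z_1-z_2\rangle\ \ge\ \mu\|z_1-z_2\|^2 \quad\text{for all } u\in\partial g(z_1),\ v\in\partial g(z_2).
\]
To verify this, add the two strong-convexity subgradient inequalities
\[
g(z_2)\ge g(z_1)+\langle u,z_2-z_1\rangle+\tfrac{\mu}{2}\|z_2-z_1\|^2
\]
and its counterpart with \( z_1,z_2 \) and \( u,v \) swapped. Taking \( z_1=z(x_1) \) and \( z_2=z(x_2) \), and applying Cauchy--Schwarz, we get \( \mu\|z_1-z_2\|^2\le\|x_1-x_2\|\,\|z_1-z_2\| \). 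Hence \( \|z(x_1)-z(x_2)\|\le\mu^{-1}\|x_1-x_2\| \).

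\textbf{Step (c).} This is the main obstacle, because we must pass from a maximizer to an actual derivative. By the Fenchel--Young equality case, \( x\in\partial g(z) \) if and only if \( z\in\partial g^{*}(x) \), which holds because \( g \) is closed, proper and convex. Consequently
\[
\partial g^{*}(x)=\{z: x\in\partial g(z)\}=\arg\max_{z}\bigl(\langle z,x\rangle-g(z)\bigr)=\{z(x)\}.
\]
A finite convex function on \( \R^n \) whose subdifferential is a singleton at \( x \) is differentiable there, and in finite dimensions this differentiability is Fréchet differentiability. So \( \nabla g^{*}(x)=z(x) \), which is the asserted formula.

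If one wants to avoid citing that fact about singleton subdifferentials, a direct argument works. Using \( z(x)\in\partial g^{*}(x) \) and \( z(x+h)\in\partial g^{*}(x+h) \) gives the sandwich
\[
\langle z(x),h\rangle\ \le\ g^{*}(x+h)-g^{*}(x)\ \le\ \langle z(x+h),h\rangle .
\]
Together with the Lipschitz bound from Step (b), this yields
\[
0\ \le\ g^{*}(x+h)-g^{*}(x)-\langle z(x),h\rangle\ \le\ \mu^{-1}\|h\|^2 ,
\]
which proves Fréchet differentiability directly. Combined with Step (b), \( \nabla g^{*} \) is Lipschitz continuous with constant \( \mu^{-1} \).
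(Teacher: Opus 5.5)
Your proof is correct. The paper gives no argument of its own and simply imports the result from Milzarek et al. Your route is the standard one behind that citation, namely the conjugate correspondence theorem in \cite{beck2017first}:
\begin{itemize}
\item a unique maximizer from strong convexity plus coercivity;
\item $\partial g^{*}(x)=\{z(x)\}$ via the conjugate subgradient relation;
\item the $\mu^{-1}$-Lipschitz bound from strong monotonicity of $\partial g$.
\end{itemize}
Your sandwich estimate $0\le g^{*}(x+h)-g^{*}(x)-\langle z(x),h\rangle\le\mu^{-1}\|h\|^{2}$ is a useful addition. It gives Fr\'echet differentiability using only the elementary inclusion $z(x)\in\partial g^{*}(x)$, so it avoids the closedness-dependent half of the Fenchel--Young inversion.
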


\begin{defn}
Let $F : V \to \R^m$ be locally Lipschitz and let $V \subseteq \R^n$ be an open set. $F$ is called semismooth at $x \in V$ (with respect to $\partial F$), if $F$ is directionally differentiable at $x$ and if it holds that
\[
\sup_{M \in \partial F(x+s)} \| F(x+s) - F(x) - Ms \| = o(\|s\|) \quad \text{as } s \to 0.
\]
Moreover, for $\nu > 0$, $F$ is called $\nu$-order semismooth (strongly semismooth if $\nu = 1$) at $x \in V$ (w.r.t. $\partial F$), if $F$ is directionally differentiable at $x$ and we have
\[
\sup_{M \in \partial F(x+s)} \| F(x+s) - F(x) - Ms \| = \mathcal{O}(\|s\|^{1+\nu}) \quad \text{as } s \to 0.
\]
\end{defn}

For problem \eqref{problem1}, we introduce the proximal gradient mapping as a measure of stationarity, i.e., for $\alpha > 0$, we define
\[
F_{\text{nat}}^\alpha : \R^n \to \R^n, \quad F_{\text{nat}}^\alpha (x) := x - \operatorname{prox}_{\alpha \varphi} (x - \alpha \nabla f(x))
\]
and $F_{\text{nat}}(x) := F_{\text{nat}}^1(x)$. Note that $x^*$ is a stationary point if and only if $F_{\text{nat}}^\alpha(x^*) = 0$. Clearly, if $f$ is $L$-smooth, then the function $F_{\text{nat}}$ is Lipschitz continuous with constant $2 + L$.

\section{The Stochastic Proximal Point Method of Minimax Optimization Problem.}\label{sec:2}
\subsection{Assumption}
We first specify the basic assumptions under which we construct and study our stochastic proximal point method. Throughout this paper, we assume that the functions $g_i: \R^n \to \R$,$h_i: \R^m \to \R$ and $f_i:\R^n \times \R^m \to \R$, $i \in [N]$, are continuously differentiable and $\varphi:\Re^n \to \Re,\psi:\Re^m \to \Re$ is a closed, convex, and proper mapping. Further conditions on $f,g,h,\varphi,\psi$ are summarized and stated below.
\begin{assump}    
\label{eq:assume_2.1}   
Let functions $g_i: \R^n \to \R$,$h_i: \R^m \to \R$ and $f_i:\R^n \times \R^m \to \R $ satisfy the following conditions.    
\begin{enumerate}[label=(A\arabic*)]        
\item Functions $g_i$,$h_i$,$f_i$ are continuously differentiable convex with Lipschitz continuous gradients;
~i.e., there exist constants $L_g^i > 0$ , $L_h^i > 0$ and $L_f^i >0$ such that for any $(x',y'), (x,y) \in \R^n \times \R^m$        
\begin{align*}            
\|\nabla g_i(x') - \nabla g_i(x)\| &\leq L_g^i \|x' - x\| \\            
\|\nabla h_i(y') - \nabla h_i(y)\| &\leq L_h^i \|y' - y\|  \\            
\max\{\|\nabla_x f_i(x',y') - \nabla_x f_i(x,y)\|,\|\nabla_y f_i(x',y')&-\nabla_y f_i(x,y)\|\} \leq L_f^i \|(x',y') - (x,y)\|          
\end{align*}        
For each $i \in [N]:=\{1,2,\cdots,N\}$, given any fixed $y \in \R^m$, the function $g_i(x) + f_i(x,y)$ is $\mu_x^i$-strongly convex w.r.t $x$;        
For each $i$, given any fixed $x \in \R^n$, the function $-h_i(y) + f_i(x,y)$ is $\mu_y^i$-strongly concave w.r.t $y$.        
Function \( f_i \) is convex with respect to \( x \) and concave with respect to \( y \).        
\label{asumme:L-smooth}        
\item Define Lagrange Functions 
\begin{equation}\label{eq:lagrange}
L(x,y,\lambda) := \varphi(x) + g(x) + f(x,y) - h(y) - \psi(y) + \langle \lambda, Ax + By + c \rangle
\end{equation}         
is closed ,where $\lambda \in \Re^q$ is a Lagrange multiplier,        
admits a saddle point $(x^*, y^*,\lambda^*)$, i.e., there exists         
$(x^*, y^*, \lambda^*) \in \Re^n \times \Re^m \times \Re$ such that        
\begin{equation*}         
\forall x\in\Re^n, y\in\mathbb \Re^m, \lambda\in\mathbb \Re^q,~          
L(x^*, y, \lambda^*) \leq L(x^*, y^*, \lambda^*) \leq L(x, y^*, \lambda)          
\label{(A2)}         
\end{equation*}    
\end{enumerate}
\end{assump}
\quad Under Assumptions (A1) and (A2), obviously, for $L(x,y,\lambda)$ being strongly convex w.r.t x and strongly concave w.r.t y
, we give the following definition
\[\textbf{$\mu_x := \max_i \mu^i_x$},\quad \textbf{$\mu_y := \max_i \mu^i_y$}.\] 
Then $L(x,y,\lambda)$ is $\mu_x$-strongly convex w.r.t $x$  
and $\mu_y$-strongly concave w.r.t $y$.  
Obviously,the mapping $x \mapsto \operatorname{prox}_{\alpha\varphi}(x)$ is semismooth for all $\alpha > 0$ and all $x \in \Re^n$,
The mapping $y \mapsto \operatorname{prox}_{\alpha\psi}(y)$ is semismooth for all $\alpha > 0$ and all $y \in \Re^m$.
Prior to the following assumptions, for simplicity of expression, we provide the following definitions. Regarding the smooth functions involving \(x\) and \(y\) in the objective function, we denote them as follows
\(
\phi_{i}^x : (x,y) \mapsto g_i(x) + f_i(x, y),~~\phi_y^x=\frac{1}{N}\sum_{i=1}^{N}\phi_{i}^x,
\phi_{i}^y : (x,y) \mapsto h_i(y) - f_i(x, y),~~\phi^y_x=\frac{1}{N}\sum_{i=1}^{N}\phi_{i}^y.
\)
For notational convenience, we denote $\phi^x_i(x,y)$ as $\phi^x_{y,i}(x)$ and $\phi^y_i(x,y)$ as $\phi^y_{x,i}(y)$; for example, $\phi^x_i(x^k,y^k)$ is denoted as $\phi^x_{y^k,i}(x^k)$.

We work with the following assumptions for the conjugates
\begin{assump}
\label{eq:assume_2.2}       
Let functions $g_i: \R^n \to \R$,~$h_i: \R^m \to \R$ and $f_i:\R^n \times \R^m \to \R $ satisfy the following conditions.        
Note that in the definitions of the conjugates, $(\phi^x_{y,i})^*$ is the conjugate of $\phi^x_{y,i}$ with respect to $x$ (with $y$ fixed), and $(\phi^y_{x,i})^*$ is the conjugate of $\phi^y_{x,i}$ with respect to $y$ (with $x$ fixed).
\begin{enumerate}[label=(A\arabic*),start=3]            
\item The functions $(\phi^x_{y,i})^*$,$(\phi^y_{x,i})^*$ are essentially differentiable \cite{goebel2008local}  with locally Lipschitz continuous gradients on the sets  
$\mathcal{D}^x_i := \text{int(dom)}(\phi^x_{y,i})^* \neq \emptyset,\mathcal{D}^y_i := \text{int(dom)}(\phi^y_{x,i})^* \neq \emptyset, i \in [N]$.            
\label{essentially_differentiable}             
\item The mappings $\nabla (\phi^x_{y,i})^* $,$\nabla (\phi_{x,i}^y)^*$ are semismooth on $\mathcal{D}^x_i$,$\mathcal{D}^y_i$ for all $i$.
\label{asumme:conjugate_semidef}            
\end{enumerate}    
\end{assump}

  Assumption \ref{asumme:L-smooth} ensures that $\phi_{y,i}^x$ and $\phi_{x,i}^y$ are Lipschitz smooth, which implies their conjugates are strongly convex. Combined with Assumption \ref{essentially_differentiable}, this guarantees that the generalized Hessians of the conjugates are uniformly positive definite. Specifically, there exist constants $\mu^*_x, \mu^*_y > 0$ (bounded by the inverse smoothness constants) such that for all $i \in [N]$
\begin{equation}
\label{eq:positive_definite}
\begin{aligned}    
\langle h, M h \rangle &\geq \mu^*_x\|h\|^2, \quad \forall M \in \partial(\nabla (\phi_{y,i}^x)^*)(u), \quad \forall u \in \mathcal{D}^x_i,\\    
\langle h, M h \rangle &\geq \mu^*_y\|h\|^2, \quad \forall M \in \partial(\nabla (\phi_{x,i}^y)^*)(v), \quad \forall v \in \mathcal{D}^y_i.
\end{aligned}
\end{equation}

\subsection{Implicit Proximal Point Method}
\label{sec:algorithm}
\qquad Inspired by traditional deterministic implicit proximal point methods \cite{eckstein1992douglas}, which offer enhanced stability, we extend this approach to develop a  implicit proximal point methods for solving minimax problem \eqref{problem1} with nonsmooth terms.

Given a suitable step size $\alpha_k > 0$, we observe from \ref{asumme:L-smooth} that the function $x \mapsto L(x,y,\lambda)$ is convex, while $y \mapsto L(x,y,\lambda)$ is concave.
Therefore, the iteration
\begin{equation*}
    \begin{split}
    x^{k+1} = \underset{x \in \Re^n}{\operatorname{argmin}} \Biggl\{ 
        &g(x)  + f(x, y^{k+1}) + (\lambda^k)^T A x+ \varphi(x) + \frac{1}{2\alpha_k}\|x - x^k\|^2  \Biggr\},
    \end{split}
    \end{equation*}
 The first-order optimality condition for $x^{k+1}$ is given by
\begin{align*}
    0 &\in \nabla g(x^{k+1}) + \nabla_x f(x^{k+1}, y^{k+1}) + A^\top \lambda^k + \partial \varphi(x^{k+1}) + \frac{1}{\alpha_k}(x^{k+1} - x^k) \\
    \iff &x^{k+1} = \operatorname{prox}_{\alpha_k \varphi} \big( x^k - \alpha_k [ \nabla g(x^{k+1}) + \nabla_x f(x^{k+1}, y^{k+1}) + A^\top \lambda^k ] \big).
\end{align*}
Similarly, 
    \(
        y^{k+1}=\operatorname{prox}_{\alpha_k \psi}\left(y^k+\alpha_k[\nabla_y f(x^k,y^{k+1})-\nabla h(y^{k+1})+B^T\lambda^k]\right)
    \)
    is well-defined.
    
    We reformulate the above update steps into the implicit update system of equations
    \begin{equation}
        \begin{cases}
            \displaystyle
            y^{k+1} = \operatorname{prox}_{\alpha_k \psi} \Big( y^k + \alpha_k \big[ \nabla_y f(x^k, y^{k+1}) - \nabla h(y^{k+1}) + B^\top \lambda^k \big] \Big), \\
            \displaystyle
            x^{k+1} = \operatorname{prox}_{\alpha_k \varphi} \Big( x^k - \alpha_k \big[ \nabla g(x^{k+1}) + \nabla_x f(x^{k+1}, y^{k+1}) + A^\top \lambda^k \big] \Big), \\
            \displaystyle
            \lambda^{k+1} = \lambda^{k} - \alpha_k \big( A x^{k+1} + B y^{k+1} + c \big).
        \end{cases}\tag{2.4} \label{eq:iteration_form}
    \end{equation}
    The update for $\lambda$ employs gradient descent to handle the linear constraint $Ax + By + c = 0$ in problem \eqref{problem1}, where the term $(A x^{k+1} + B y^{k+1} + c)$ represents the constraint violation at the current primal iterates.

We now introduce an equation-based characterization of the implicit update \eqref{eq:iteration_form}.We now define   
\( \bar{\xi}_{y}^{x,k+1} = (\bar{\xi}_{y,1}^{x,k+1}, \ldots, \bar{\xi}_{y,N}^{x,k+1}) \) and    
\( \bar{\xi}_{x}^{y,k+1} = (\bar{\xi}_{x,1}^{y,k+1}, \ldots, \bar{\xi}_{x,N}^{y,k+1}) \) by
\[
\begin{cases}
\bar{\xi}_{y,i}^{x,k+1} := \nabla_x \phi^x_{y^{k+1},i}(x^{k+1}) = \nabla g_{i}(x^{k+1})+ \nabla_x f_{i}(x^{k+1},y^{k+1}),\\
\bar{\xi}_{x,i}^{y,k+1} := \nabla_y \phi^y_{x^k,i}(y^{k+1}) = \nabla h_{i}(y^{k+1})- \nabla_y f_{i}(x^{k},y^{k+1}),
\tag{2.5}
\quad i \in N.
\end{cases}
\]
Under assumption \ref{asumme:L-smooth}, \cite[Thm. 4.20]{beck2017first} yields
\[
\bar{\xi}_{y,i}^{x,k+1}= \nabla_x \phi^x_{y,i}(x^{k+1}) \quad \iff \quad \nabla (\phi^x_{y,i})^*(\bar{\xi}_{y,i}^{x,k+1})=x^{k+1},
\]
\[
\bar{\xi}_{x,i}^{y,k+1}= \nabla_y \phi^y_{x,i}(y^{k+1}) \quad \iff \quad \nabla (\phi^y_{x,i})^*(\bar{\xi}_{x,i}^{y,k+1})=y^{k+1}.
\]
The system \eqref{eq:iteration_form} is equivalent to the system
\[
\begin{cases}
\label{equation_11}
y^{k+1} = \operatorname{prox}_{\alpha_k \psi} \left( y^k- \frac{\alpha_k}{N}\sum_{i=1}^{N}\bar{\xi}_{x,i}^{y,k+1}+\alpha_k B^\top \lambda^k  \right),\\
x^{k+1} = \operatorname{prox}_{\alpha_k \varphi} \left( x^k - \frac{\alpha_k}{N}\sum_{i=1}^{N}\bar{\xi}_{y,i}^{x,k+1}-\alpha_k A^\top \lambda^k \right), \\
\lambda^{k+1}=\lambda^k-\alpha_k(Ax^{k+1}+By^{k+1}+c),\\
\nabla (\phi^y_{x,i})^*(\bar{\xi}_{x,i}^{y,k+1})=\operatorname{prox}_{\alpha_k \psi} \left( y^k -\frac{\alpha_k}{N}\sum_{i=1}^{N}\bar{\xi}_{x,i}^{y,k+1}+\alpha_k B^\top \lambda^k\right),\\
\nabla(\phi^x_{y,i})^*(\bar{\xi}_{y,i}^{x,k+1})=\operatorname{prox}_{\alpha_k \varphi} \left( x^k - \frac{\alpha_k}{N}\sum_{i=1}^{N}\bar{\xi}_{y,i}^{x,k+1}-\alpha_k A^\top \lambda^k \right),  i\in N.
\tag{2.6}
\end{cases}
\]

The update for $\lambda$ employs gradient descent to handle the linear constraint $Ax+By+c=0$ in problem~\eqref{problem1}, where the term $Ax^{k+1}+By^{k+1}+c$ represents the constraint violation at the current primal iterates.
\subsection{Stochastic Gradient Approximation And Variance Reduction}

To address the high computational cost of full gradients in Problem~\eqref{problem1}, we employ stochastic approximations. For a random subset $\mathcal{S} \subseteq [N]$, we define the stochastic estimators for the functions and their gradients as 
$g_{\mathcal{S}}(x) := \tfrac{1}{|\mathcal{S}|}\sum_{i\in\mathcal{S}}g_i(x)$, $\nabla g_{\mathcal{S}}(x) := \tfrac{1}{|\mathcal{S}|}\sum_{i\in\mathcal{S}}\nabla g_i(x)$, $h_{\mathcal{S}}(y) := \tfrac{1}{|\mathcal{S}|}\sum_{i\in\mathcal{S}}h_i(y)$, $\nabla h_{\mathcal{S}}(y) := \tfrac{1}{|\mathcal{S}|}\sum_{i\in\mathcal{S}}\nabla h_i(y)$, $\nabla_x f_{\mathcal{S}}(x,y) := \tfrac{1}{|\mathcal{S}|}\sum_{i\in\mathcal{S}}\nabla_x f_i(x,y)$, $\nabla_y f_{\mathcal{S}}(x,y) := \tfrac{1}{|\mathcal{S}|}\sum_{i\in\mathcal{S}}\nabla_y f_i(x,y)$.
Accordingly, the stochastic Lagrangian is defined as
\[
L_\mathcal{S}(x,y,\lambda) := \varphi(x) + g_{\mathcal{S}}(x) + f_{\mathcal{S}}(x,y) - h_{\mathcal{S}}(y) - \psi(y) + \langle \lambda, A x + B y + c \rangle.
\]
where $ f_{\mathcal{S}}(x,y) := \frac{1}{|\mathcal{S}|}\sum_{i\in\mathcal{S}}f_i(x,y).$

Replacing the full gradients in \eqref{eq:iteration_form} with stochastic estimators over a random batch $\mathcal{S}_k \subseteq [N]$ ($\mathcal{S}_k$ be a randomly sampled subset at iteration $k$) yields the update
\begin{equation}
\label{sto_imp_update} \tag{2.7}
\begin{cases}
    y^{k+1} = \operatorname{prox}_{\alpha_k \psi} \big( y^k + \alpha_k [ \nabla_y f_{\mathcal{S}_k}(x^k, y^{k+1}) - \nabla h_{\mathcal{S}_k}(y^{k+1}) + B^\top \lambda^k ] \big), \\
    x^{k+1} = \operatorname{prox}_{\alpha_k \varphi} \big( x^k - \alpha_k [ \nabla g_{\mathcal{S}_k}(x^{k+1}) + \nabla_x f_{\mathcal{S}_k}(x^{k+1}, y^{k+1}) + A^\top \lambda^k ] \big), \\
    \lambda^{k+1} = \lambda^k - \alpha_k ( A x^{k+1} + B y^{k+1} + c ).
\end{cases}
\end{equation}
Next, we define the natural residual maps $F_{y,\text{nat}}^{x,\alpha}: \R^n \to \R^n$ and $F_{x,\text{nat}}^{y,\alpha}: \R^m \to \R^m$ as
\begin{align*}
F_{y,\text{nat}}^{x,\alpha}(x) &:= x - \operatorname{prox}_{\alpha \varphi} \big( x - \alpha ( \nabla g(x) + \nabla_x f(x,y) + A^\top \lambda ) \big), \\
F_{x,\text{nat}}^{y,\alpha}(y) &:= y - \operatorname{prox}_{\alpha \psi} \big( y + \alpha ( \nabla_y f(x,y) - \nabla h(y) + B^\top \lambda ) \big).
\end{align*}
Let $F^{x}_{y,\text{nat}} := F_{y,\text{nat}}^{x,1}$ and $F^{y}_{x,\text{nat}} := F_{x,\text{nat}}^{y,1}$. Under the smoothness assumptions on $f, g, h$, the maps $F^{x}_{y,\text{nat}}$ and $F^{y}_{x,\text{nat}}$ are Lipschitz continuous with constants $2 + L_f + L_g$ and $2 + L_f + L_h$, respectively.

While the basic stochastic approach reduces computational burden, the inherent variance in gradient estimation can lead to slower convergence and instability. To address this limitation and achieve faster convergence rates, we incorporate variance reduction techniques that have proven to be powerful tools for accelerating stochastic optimization algorithms \cite{j2016proximal,xiao2014proximal}. Specifically, we consider SVRG-type stochastic oracles that additionally incorporate the following gradient correction terms in each iteration
\begin{equation*}
\begin{aligned}
       v^{x,k}_y &:=\nabla g(\tilde{x})+\nabla_x f(\tilde{x},\tilde{y})-\nabla g_{\mathcal{S}_k}(\tilde{x})-\nabla_x f_{\mathcal{S}_k}(\tilde{x},\tilde{y}) \\
v^{y,k}_x &:=-\nabla h(\tilde{y})+\nabla_y f(\tilde{x},\tilde{y})+\nabla h_{\mathcal{S}_k}(\tilde{y})-\nabla_y f_{\mathcal{S}_k}(\tilde{x},\tilde{y}),
\label{svrg_varience_reduce_term}
\end{aligned}
\tag{2.8}
\end{equation*}
where $\tilde{x},\tilde{y}$ is a reference point that is generated in an outer loop. To simplify the notation,
setting 
\[\hat{v}^{x,k}_y := \alpha_k (v^{x,k}_y+A^T \lambda^k),\quad 
\hat{v}^{y,k}_x := -\alpha_k (v^{y,k}_x+B^T \lambda^k)\tag{2.9}\label{hat{v}}.\]
 This leads to stochastic proximal point-type updates of the form

\begin{equation}
    \tag{2.10}
    \begin{cases}
        \label{(3.3)}
        \displaystyle
        y^{k+1} = \operatorname{prox}_{\alpha_k \psi} \Big( y^k - \alpha_k \big[ -\nabla_y f_{\mathcal{S}_k}(x^k, y^{k+1}) + \nabla h_{\mathcal{S}_k}(y^{k+1})\big]-\hat{v}^{y,k}_x  \Big), \\
        \displaystyle
        x^{k+1} = \operatorname{prox}_{\alpha_k \varphi} \Big( x^k - \alpha_k \big[ \nabla g_{\mathcal{S}_k}(x^{k+1}) + \nabla_x f_{\mathcal{S}_k}(x^{k+1}, y^{k+1})\big] -\hat{v}^{x,k}_y  \Big), \\
        \displaystyle
        \lambda^{k+1} = \lambda^k - \alpha_k \big( A x^{k+1} + B y^{k+1} + c \big).
    \end{cases}
\end{equation}

\indent We now introduce an alternative equation-based characterization of the implicit update \eqref{sto_imp_update}.
Let us set \( b_k := |\mathcal{S}_k| \) and let \( (\kappa_k(1), \ldots, \kappa_k(b_k)) \)
 enumerate the elements of the tuple \( \mathcal{S}_k \). We will often abbreviate \( \kappa_k \)
  by \( \kappa \). We now define 
  \( \xi_{y}^{x,k+1} = (\xi_{y,1}^{x,k+1}, \ldots, \xi_{y,b_k}^{x,k+1}) \) and 
   \( \xi_{x}^{y,k+1} = (\xi_{x,1}^{y,k+1}, \ldots, \xi_{x,b_k}^{y,k+1}) \) by
\[
\begin{cases}
\xi_{y,i}^{x,k+1} := \nabla_x \phi^x_{y^{k+1},k(i)}(x^{k+1}) = \nabla g_{k(i)}(x^{k+1})+ \nabla_x f_{k(i)}(x^{k+1},y^{k+1})\\
\xi_{x,i}^{y,k+1} := \nabla_y \phi^y_{x^k,k(i)}(y^{k+1}) = \nabla h_{k(i)}(y^{k+1})- \nabla_y f_{k(i)}(x^{k},y^{k+1}),
\tag{2.11}
\quad i \in [b_k].
\end{cases}
\]
Under assumption \ref{asumme:L-smooth}, \cite[Thm 4.20]{beck2017first} yields
\[
\xi_{y,i}^{x,k+1}= \nabla_x \phi^x_{y,k(i)}(x^{k+1}) \quad \iff \quad \nabla (\phi^x_{y,k(i)})^*(\xi_{y,i}^{x,k+1})=x^{k+1},
\]
\[
\xi_{x,i}^{y,k+1}= \nabla_y \phi^y_{x,k(i)}(y^{k+1}) \quad \iff \quad \nabla (\phi^y_{x,k(i)})^*(\xi_{x,i}^{y,k+1})=y^{k+1}.
\]

The step \eqref{(3.3)} is equivalent to the system

\[
\begin{cases}
\label{equation_1}
y^{k+1} = \operatorname{prox}_{\alpha_k \psi} \left( y^k- \frac{\alpha_k}{b_k}\sum_{i=1}^{b_k}\xi_{x,i}^{y,k+1}-\hat{v}^{y,k}_x \right),\\
x^{k+1} = \operatorname{prox}_{\alpha_k \varphi} \left( x^k - \frac{\alpha_k}{b_k}\sum_{i=1}^{b_k}\xi_{y,i}^{x,k+1}-\hat{v}^{x,k}_y \right), \\

\lambda^{k+1}=\lambda^k-\alpha_k(Ax^{k+1}+By^{k+1}+c),\\
\nabla (\phi^y_{x,k(i)})^*(\xi_{x,i}^{y,k+1})=\operatorname{prox}_{\alpha_k \psi} \left( y^k -\frac{\alpha_k}{b_k}\sum_{i=1}^{b_k}\xi_{x,i}^{y,k+1}-\hat{v}^{y,k}_x\right),\\
\nabla(\phi^x_{y,k(i)})^*(\xi_{y,i}^{x,k+1})=\operatorname{prox}_{\alpha_k \varphi} \left( x^k - \frac{\alpha_k}{b_k}\sum_{i=1}^{b_k}\xi_{y,i}^{x,k+1}-\hat{v}^{x,k}_y \right),  i\in [b_k].
\tag{2.12}
\end{cases}
\]
 The semismooth Newton method for \eqref{equation_1} is
specified and discussed in the next section. In this article, we primarily focus on the variance-reduced
update (\ref{(3.3)}), yet the technique and results presented in \hyperref[Sloving_subproblem]{Section\ref{Sloving_subproblem}} also hold true for the general
update \eqref{sto_imp_update}.
\subsection{Bound of Variance}
We derives the variance bounds for our stochastic gradient estimators, a critical step for establishing convergence guarantees. We first analyze the variance properties under different sampling schemes, then combine these results with Lipschitz continuity to obtain the final bounds. The analysis ensures our gradient estimators maintain controlled variance while preserving unbiasedness, providing the theoretical foundation for algorithm stability.
We now formally specify the notion of admissible stochastic oracles for our problem.

\begin{defn}
Let \( \mathcal{S} \sim \mathbb{P} \) be a \( b \)-tuple of elements of
 \( [N] \), where \( b \in [N] \) is fixed. Let \( \kappa(i) \in [N] \)
  denote the random number in the \( i \)-th position of \( \mathcal{S} \).
   We call \( \mathcal{S} \sim \mathbb{P} \) an admissible sampling procedure if, f
   or all \( z_i \in \mathbb{R}^\ell \), \( i \in [N] \), \( \ell \in \mathbb{N} \),
    it holds that \( \mathbb{E}_{\mathbb{P}}[z_{\mathcal{S}}] = \frac{1}{N} \sum_{i=1}^N z_i \) 
    where \( z_{\mathcal{S}} := \frac{1}{b} \sum_{i=1}^b z_{\kappa(i)} \).
\label{def:admissible_sampling}
\end{defn}
If \( \mathcal{S} \sim \mathbb{P} \) is an admissible sampling procedure, 
then we have \( \mathbb{E}_{\mathbb{P}}[f_{\mathcal{S}}(x)] = f(x) \) and 
\( \mathbb{E}_{\mathbb{P}}[\nabla f_{\mathcal{S}}(x)] = \nabla f(x) \) for all \( x \in \mathbb{R}^n \).
In the simplest case, we can choose \( \mathcal{S} \) by drawing \( b \) elements from \( [N] \)
 under a uniform distribution (cf. \cite{xiao2014proximal} for a similar setting). This is an admissible sampling procedure 
 in the sense of \hyperref[def:admissible_sampling]{Definition \ref{def:admissible_sampling}}, regardless of whether we draw with or without replacement  \cite[\S 2.8]{lohr2021sampling}.
\begin{rem}
Note that \( x^k \), \( \alpha_k \), etc., serve as abbreviations when the value of \( s \) is clear. The notation \( x^{s,k} \), \( \alpha_k^s \), etc., can be used to highlight the full \( (s,k) \)-dependence.
\end{rem}

Let \((\Omega, \mathcal{F}, \mathbb{P})\) be a probability space, where \(\Omega\) is the sample space, 
\(\mathcal{F}\) is a \(\sigma\)-algebra over \(\Omega\), and \(\mathbb{P}\) is a probability measure.
 Suppose \( y_i : \Omega \to \mathcal{Y} \) are \textbf{\(\mathcal{F}\)-measurable random variables} 
 for \( i = 1, 2, \ldots, N \), where \(\mathcal{Y}\) is the measurable space (e.g., \(\mathcal{Y} = \R\)
  with the Borel \(\sigma\)-algebra). 
  
\begin{Lemma}  
\cite[\S 2.8]{lohr2021sampling}
Let \(\mathcal{S}\) be a \(b\)-tuple sampled uniformly at random \emph{without replacement} from \([N] \coloneqq \{1, 2, \dots, N\}\), independent of any other random variables. Define the indicator random variables 
\[
Z_i \coloneqq 
\begin{cases} 
1, & \text{if unit } i \text{ is included in } \mathcal{S}, \\ 
0, & \text{otherwise}, 
\end{cases}
\quad \text{for } i = 1, 2, \dots, N.
\]
Then, the collection \(\{Z_1, \dots, Z_N\}\) consists of identically distributed Bernoulli random
 variables with \(\Pr[Z_i = 1] = \frac{b}{N}\) and \(\Pr[Z_i = 0] = 1 - \frac{b}{N}\).
  The sample mean \(\bar{w} \coloneqq \frac{1}{b} \sum_{i=1}^N Z_i w_i\) is an unbiased estimator of the 
  population mean \(\bar{w}_U \coloneqq \frac{1}{N} \sum_{i=1}^N w_i\), i.e., \(\mathbb{E}[\bar{w}] = \bar{w}_U\).

The unbiased sample variance be \(s^2=\frac{1}{b-1}\sum_{i \in \mathcal{S}}(w_i-\bar{w})^2\).
The population variance be \( S^2 = \frac{1}{N - 1}\sum_{i = 1}^N (w_i - \bar{w}_U)^2 \).
 Then the variance of \( \bar{w} \) is \( V(\bar{w})
      = \left(1 - \frac{b}{N}\right)\frac{S^2}{b} \).
      \label{expectation_variance}
\end{Lemma}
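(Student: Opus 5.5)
We prove Lemma \ref{expectation_variance} directly from the indicator representation $\bar{w} = \frac{1}{b}\sum_{i=1}^N Z_i w_i$; the only inputs are the first and second moments of the $Z_i$. We work with the values $w_i$ as fixed numbers; the random vectors $z_i\in\R^\ell$ are handled componentwise.

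\textbf{Marginals of the $Z_i$.} Since $\mathcal{S}$ is drawn uniformly without replacement, every $b$-subset of $[N]$ is equally likely. By counting, $\Pr[Z_i=1] = \binom{N-1}{b-1}/\binom{N}{b} = b/N$, and this does not depend on $i$. So the $Z_i$ are identically distributed Bernoulli$(b/N)$ variables.

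\textbf{Unbiasedness.} By linearity,
\[
\E[\bar w] = \frac{1}{b}\sum_{i=1}^N \frac{b}{N} w_i = \bar w_U .
\]

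\textbf{Joint moments.} For $i\neq j$,
\[
\Pr[Z_iZ_j=1] = \binom{N-2}{b-2}\Big/\binom{N}{b} = \frac{b(b-1)}{N(N-1)} .
\]
This gives
\[
V(Z_i) = \frac{b}{N}\Big(1-\frac{b}{N}\Big), \qquad
\mathrm{Cov}(Z_i,Z_j) = \frac{b(b-1)}{N(N-1)} - \frac{b^2}{N^2} = -\frac{b}{N}\Big(1-\frac{b}{N}\Big)\frac{1}{N-1}.
\]

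\textbf{Variance expansion.} Because $\sum_i Z_i = b$ is deterministic, $\bar w - \bar w_U = \frac{1}{b}\sum_i Z_i (w_i-\bar w_U)$. Set $d_i := w_i - \bar w_U$, so that $\sum_i d_i = 0$. Then
\[
V(\bar w) = \frac{1}{b^2}\Big[\sum_i d_i^2\, V(Z_i) + \sum_{i\neq j} d_i d_j\, \mathrm{Cov}(Z_i,Z_j)\Big].
\]
Since $\sum_i d_i = 0$, we have $\sum_{i\neq j} d_i d_j = -\sum_i d_i^2$. Substituting,
\[
V(\bar w) = \frac{1}{b^2}\,\frac{b}{N}\Big(1-\frac{b}{N}\Big)\Big(1+\frac{1}{N-1}\Big)\sum_i d_i^2
= \frac{1}{b}\Big(1-\frac{b}{N}\Big)\frac{1}{N-1}\sum_i d_i^2
= \Big(1-\frac{b}{N}\Big)\frac{S^2}{b}.
\]

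The only real work is the covariance computation combined with the identity $\sum_{i\neq j} d_id_j = -\sum_i d_i^2$, which is exactly where the finite-population correction $1-b/N$ appears. The sample variance $s^2$ is introduced in the statement only as notation; nothing about it needs to be proved.
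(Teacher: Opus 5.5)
Your proof is correct and takes essentially the same route as the reference the paper cites (Lohr, \S 2.8); the paper itself gives no proof. That route is the standard indicator-variable computation: $\E[Z_i]=b/N$, $\mathrm{Cov}(Z_i,Z_j)=-\frac{1}{N-1}\frac{b}{N}(1-\frac{b}{N})$ for $i\neq j$, followed by expanding $V(\bar w)$. Centering at $\bar w_U$ via the deterministic identity $\sum_i Z_i=b$ is only a small streamlining of that same computation.
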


Before proving the lemma, we define
\begin{align*}
&\phi_{y,\mathcal{S}}^x:= \frac{1}{|\mathcal{S}|} \sum_{i=1}^{|\mathcal{S}|} \phi_{y,i}^x,~
\phi_{x,\mathcal{S}}^y:=\frac{1}{|\mathcal{S}|}\sum_{i=1}^{|\mathcal{S}|} \phi_{x,i}^y.\\
&\zeta_{y,i}^x(x,\tilde{x};y,\tilde{y}):=\nabla_x \phi_{y,i}^x(x)-\nabla_x \phi_{\tilde{y},i}^x(\tilde{x}),\ \zeta_{y,\mathcal{S}}^x(x,\tilde{x};y,\tilde{y}):=\frac{1}{\mathcal{|S|}}\sum_{i\in\mathcal{S}}\zeta_{y,i}^x(x,\tilde{x};y,\tilde{y}),\\ 
&\zeta_{x,i}^y(y,\tilde{y};x,\tilde{x}):=\nabla_y \phi_{x,i}^y(y)-\nabla_y \phi_{\tilde{x},i}^y(\tilde{y}),\zeta_{x,\mathcal{S}}^y(y,\tilde{y};x,\tilde{x}):=\frac{1}{\mathcal{|S|}}\sum_{i\in\mathcal{S}}\zeta_{x,i}^y(y,\tilde{y};x,\tilde{x})\\
&~\bar{L}_{\phi,x}:=\max_i L_{\phi,x}^i,~\bar{L}_{\phi,y}:=\max_i L_{\phi,y}^i.
\end{align*}
Then, we have:~$\phi_{y,i}^x$ is $L_{\phi,x}^i:=(L_f^i+L_g^i)$-smooth and $\mu_x^i$-strongly convex ,
 $\phi_i^y$ is $L_{\phi,y}^i:=(L_f^i+L_h^i)$-smooth and $\mu_y^i$-strongly convex.

In this section, let $\mathcal{F}$ be a $\sigma$-algebra and suppose that $x$ and $\tilde{x}$ are $\mathcal{F}$-measurable random variables in $\R^n$,
 $y$ and $\tilde{y}$ are $\mathcal{F}$-measurable random variables in $\R^m$.

\begin{Lemma}
Suppose that condition \ref{asumme:L-smooth} is satisfied and let the index \(i\) be drawn uniformly from \([N]\) 
and independently of \(\mathcal{F}\). Conditioned on \(\mathcal{F}\), we then have 
\begin{align*}
&\mathbb{E}\|\zeta_{y,i}^x(x,\tilde{x};y,\tilde{y})\|^2 \leq \bar{L}_{\phi,x}^2[\|x - \tilde{x}\|^2+\|y-\tilde{y}\|^2],\quad\\
&\mathbb{E}\|\zeta_{x,i}^y(y,\tilde{y},x,\tilde{x})\|^2 \leq \bar{L}_{\phi,y}^2[\|x - \tilde{x}\|^2+\|y-\tilde{y}\|^2]
\end{align*} 
almost surely.
\label{control_gradient}
\end{Lemma}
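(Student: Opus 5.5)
The plan is to prove a deterministic, pointwise bound for each fixed index $i\in[N]$ and each realization of $(x,\tilde x,y,\tilde y)$, and then average over the uniformly drawn index. Since the index is independent of $\mathcal{F}$ and $x,\tilde x,y,\tilde y$ are $\mathcal{F}$-measurable, the conditional expectation is
\[
\mathbb{E}\bigl[\|\zeta_{y,i}^x(x,\tilde x;y,\tilde y)\|^2\mid\mathcal{F}\bigr]=\frac1N\sum_{j=1}^N\|\zeta_{y,j}^x(x,\tilde x;y,\tilde y)\|^2
\]
almost surely. It therefore suffices to show, for every $j\in[N]$, that $\|\zeta_{y,j}^x\|^2\le (L^j_{\phi,x})^2\,[\|x-\tilde x\|^2+\|y-\tilde y\|^2]$. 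Bounding each $L^j_{\phi,x}$ by $\bar L_{\phi,x}=\max_j L^j_{\phi,x}$ then gives the claim, and the $y$-part is handled the same way.

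For the pointwise bound, I would expand $\zeta_{y,j}^x(x,\tilde x;y,\tilde y)=[\nabla g_j(x)-\nabla g_j(\tilde x)]+[\nabla_x f_j(x,y)-\nabla_x f_j(\tilde x,\tilde y)]$ and apply the triangle inequality together with the Lipschitz bounds in \ref{asumme:L-smooth}. This gives
\[
\|\zeta_{y,j}^x\|\le L_g^j\|x-\tilde x\|+L_f^j\|(x,y)-(\tilde x,\tilde y)\|.
\]
Using $\|x-\tilde x\|\le\|(x,y)-(\tilde x,\tilde y)\|=\bigl(\|x-\tilde x\|^2+\|y-\tilde y\|^2\bigr)^{1/2}$, the right-hand side is at most $(L_g^j+L_f^j)\|(x-\tilde x,y-\tilde y)\|=L^j_{\phi,x}\|(x-\tilde x,y-\tilde y)\|$. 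Squaring yields the desired pointwise estimate.

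The second inequality follows identically from $\zeta_{x,j}^y=[\nabla h_j(y)-\nabla h_j(\tilde y)]-[\nabla_y f_j(x,y)-\nabla_y f_j(\tilde x,\tilde y)]$. Here one uses $\|y-\tilde y\|\le\|(x,y)-(\tilde x,\tilde y)\|$ and the joint Lipschitz bound on $\nabla_y f_j$, which produces the constant $L^j_{\phi,y}=L_h^j+L_f^j$.

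There is no real obstacle in this argument. The only point needing care is that the Lipschitz condition on $\nabla_x f_i$ and $\nabla_y f_i$ in \ref{asumme:L-smooth} is stated with respect to the joint norm on $\R^n\times\R^m$. This is exactly what allows the $g$-term and the $f$-term to be combined under a single factor $\|(x-\tilde x,y-\tilde y)\|$, rather than an additional factor of $2$ arising from $(a+b)^2\le 2a^2+2b^2$. The measurability and independence assumptions serve only to justify replacing the conditional expectation by the finite average above.
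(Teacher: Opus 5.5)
Your proposal is correct and follows the same route as the paper, whose proof consists only of the remark that the bound ``follows directly from Lipschitz smoothness.'' You fill in the details the paper omits: the pointwise estimate $\|\zeta^x_{y,j}\|\le (L_g^j+L_f^j)\|(x-\tilde x,y-\tilde y)\|$ from the joint Lipschitz condition in \ref{asumme:L-smooth}, then averaging over the uniformly drawn index and bounding by $\bar L_{\phi,x}$ (and symmetrically for the $y$-part).
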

\begin{proof}
The first statement follows directly from Lipschitz  smoothness.  

\end{proof}

\begin{Lemma}
\label{unbiase_guarantee}
Let \(\mathcal{S}\) be a \(b\)-tuple drawn uniformly from \([N]\) independent of \(\mathcal{F}\). 
Define the estimators
\begin{align*}
u^x_y(x,\tilde{x};y,\tilde{y}) &:= \nabla_x \phi_{y,\mathcal{S}}^x(x) - \nabla_x\phi_{\tilde{y},\mathcal{S}}^x(\tilde{x}) + \nabla_x \phi_{\tilde{y}}^x(\tilde{x}), \\
u^y_x(y,\tilde{y};x,\tilde{x}) &:= \nabla_y \phi_{x,\mathcal{S}}^y(y) - \nabla_y \phi_{\tilde{x},\mathcal{S}}^y(\tilde{y}) + \nabla_y \phi_{\tilde{x}}^y(\tilde{y}).
\end{align*}
Then, \(u_y^x\) and \(u_x^y\) are unbiased estimators of \(\nabla_x \phi_y^x(x)\) and \(\nabla_y\phi_x^y(y)\). 
Furthermore, let \(\zeta_{y,i}^x := \nabla_x \phi^x_{y,i}(x)-\nabla_x \phi^x_{\tilde{y},i}(\tilde{x})\) (and similarly for \(\zeta_{x,i}^y\)). The variance is bounded by
\begin{enumerate}
    \item[(i)] With replacement: \(\mathbb{E}\|u_y^x(x,\tilde{x};y,\tilde{y})-\nabla_x \phi^x_y(x)\|^2 \leq \frac{1}{b}\mathbb{E}\|\zeta^x_{y,i}(x,\tilde{x};y,\tilde{y})\|^2\).
    \item[(ii)] Without replacement: \(\mathbb{E}\|u_y^x(x,\tilde{x};y,\tilde{y})-\nabla_x \phi^x_y(x)\|^2 \leq \frac{N-b}{b(N - 1)}\frac{1}{N}\sum_{i=1}^{N}\mathbb{E}\|\zeta_{y,i}^x(x,\tilde{x};y,\tilde{y})\|^2\).
\end{enumerate}
Symmetric bounds hold for \(u_x^y\).
\end{Lemma}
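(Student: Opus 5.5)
Unbiasedness follows from admissibility (Definition \ref{def:admissible_sampling}), which holds for uniform sampling with or without replacement. Apply it with $z_i := \zeta^x_{y,i}(x,\tilde{x};y,\tilde{y})$. Since $x,\tilde{x},y,\tilde{y}$ are $\mathcal{F}$-measurable and $\mathcal{S}$ is independent of $\mathcal{F}$, the $z_i$ are fixed vectors after conditioning on $\mathcal{F}$. Hence
\[
\mathbb{E}[\zeta^x_{y,\mathcal{S}}\mid\mathcal{F}] = \frac{1}{N}\sum_{i=1}^N \zeta^x_{y,i} = \nabla_x\phi^x_y(x) - \nabla_x\phi^x_{\tilde y}(\tilde x).
\]
Since $u^x_y = \zeta^x_{y,\mathcal{S}} + \nabla_x\phi^x_{\tilde y}(\tilde x)$, we get $\mathbb{E}[u^x_y\mid\mathcal{F}] = \nabla_x\phi^x_y(x)$. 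The argument for $u^y_x$ is identical.

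For the variance, write $\bar z := \frac1N\sum_i z_i$. Then
\[
u^x_y - \nabla_x\phi^x_y(x) = \zeta^x_{y,\mathcal{S}} - \bar z = \frac1b\sum_{j=1}^b\bigl(z_{\kappa(j)}-\bar z\bigr),
\]
so the quantity to bound is the variance of a sample mean of vectors.

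\emph{With replacement.} The indices $\kappa(1),\dots,\kappa(b)$ are i.i.d.\ uniform on $[N]$. The centered terms $z_{\kappa(j)}-\bar z$ therefore have mean zero and are independent, so the cross terms vanish. This gives
\[
\mathbb{E}\|\cdot\|^2 = \frac1b\,\mathbb{E}\|z_{\kappa(1)}-\bar z\|^2 = \frac1b\bigl(\mathbb{E}\|z_i\|^2-\|\bar z\|^2\bigr)\le \frac1b\,\mathbb{E}\|z_i\|^2,
\]
which is (i). The inequality is just the fact that variance is at most the second moment.

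\emph{Without replacement.} Apply Lemma \ref{expectation_variance} coordinatewise, with $w_i$ the $\ell$-th coordinate of $z_i$, and sum over $\ell$. This gives
\[
\mathbb{E}\|\zeta^x_{y,\mathcal{S}}-\bar z\|^2 = \Bigl(1-\frac bN\Bigr)\frac1b\cdot\frac1{N-1}\sum_{i=1}^N\|z_i-\bar z\|^2 .
\]
Now use $\sum_i\|z_i-\bar z\|^2 = \sum_i\|z_i\|^2 - N\|\bar z\|^2 \le \sum_i\|z_i\|^2$. Multiplying through yields the constant $\frac{N-b}{b(N-1)}\cdot\frac1N\sum_i\|z_i\|^2$, which matches (ii). The outer expectation in (ii) only records that the $z_i$ are random through $\mathcal{F}$. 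One can take total expectation at the end, or read everything conditionally on $\mathcal{F}$.

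The symmetric bounds for $u^y_x$ follow verbatim with $z_i := \zeta^y_{x,i}$.

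The main obstacle is bookkeeping rather than substance. Lemma \ref{expectation_variance} is stated for scalars, so it must be lifted to vectors by summing over coordinates. One must also check that conditioning on $\mathcal{F}$ legitimately freezes the $z_i$, which relies on the independence of $\mathcal{S}$ from $\mathcal{F}$. Finally, the without-replacement case requires the case $b\ge 2$ for the sample-variance formula. When $b=1$, both sampling schemes coincide and the bound (ii) reduces to (i), so this case is covered separately.
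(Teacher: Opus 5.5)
Your proposal is correct and follows essentially the same route as the paper. Unbiasedness comes from the unbiasedness of the sample mean under uniform sampling. Case (i) comes from the i.i.d.\ variance computation, which the paper simply cites as Lemma~7 of \cite{j2016proximal} and you derive directly, including the step ``variance $\le$ second moment''. Case (ii) comes from the finite population correction in Lemma~\ref{expectation_variance}, followed by $\sum_i\|z_i-\bar z\|^2\le\sum_i\|z_i\|^2$.

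Your extra bookkeeping (lifting Lemma~\ref{expectation_variance} coordinatewise, and freezing the $z_i$ by conditioning on $\mathcal{F}$) fills in steps the paper leaves implicit. The separate treatment of $b=1$ is harmless but unnecessary: $V(\bar w)=(1-\frac{b}{N})\frac{S^2}{b}$ holds for every $1\le b\le N$, and only the sample variance $s^2$, which you never use, needs $b\ge 2$.
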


\begin{proof}
Unbiasedness follows directly from \hyperref[expectation_variance]{Lemma \ref{expectation_variance}}. 
Note that \(\zeta_{y,\mathcal{S}}^x(x,\tilde{x};y,\tilde{y}) = u_y^x(x,\tilde{x};y,\tilde{y}) - \nabla_x \phi^x_{\tilde{y}}(\tilde{x})= \nabla_x \phi^x_{y,\mathcal{S}}(x)-\nabla_x \phi^x_{\tilde{y},\mathcal{S}}(\tilde{x})
\).

For Case (i), applying the variance bound for independent sampling \cite[Lemma 7]{j2016proximal}, we immediately obtain
\begin{align*}
\mathbb{E}\|u_y^x(x,\tilde{x};y,\tilde{y})-\nabla_x \phi^x_y(x)\|^2=& \mathbb{E}\|\frac{1}{b} \sum_{i\in \mathcal{S}}\zeta^x_{y,i}(x,\tilde{x};y,\tilde{y})-\mathbb{E}\zeta^x_{y,i}(x,\tilde{x};y,\tilde{y})\|^2\\
\leq& \frac{1}{b}\mathbb{E}\|\zeta^x_{y,i}(x,\tilde{x};y,\tilde{y})\|^2.
\end{align*}
For Case (ii), using the finite population correction from \hyperref[expectation_variance]{Lemma \ref{expectation_variance}}, we have
\[
\mathbb{E}\|u_y^x(x,\tilde{x};y,\tilde{y})-\nabla_x \phi^x_y(x)\|^2 \leq \left(1 - \frac{b}{N}\right)\frac{1}{b(N - 1)}\sum_{i=1}^{N}\mathbb{E}\|\zeta^x_{y,i}(x,\tilde{x};y,\tilde{y})\|^2.
\]
The bounds for \(u_x^y\) follow symmetrically by exchanging \(x\) and \(y\).
\end{proof}

This result guarantees unbiasedness for convergence and quantifies variance reduction, highlighting the benefit of the finite population correction $(1-b/N)$ in without-replacement sampling. Combining \hyperref[control_gradient]{Lemma \ref{control_gradient}} and \hyperref[unbiase_guarantee]{Lemma \ref{unbiase_guarantee}}, we obtain the following conclusion.

\begin{thm}
\label{thm:variance_reduce_1}
    Suppose that \ref{asumme:L-smooth} hold. Let \( \mathcal{S} \) be a \( b \)-tuple drawn uniformly from \( [N] \) and independently 
of \( \mathcal{F} \). With \( u^x_y(x,\tilde{x};y,\tilde{y}),u_x^y(y,\tilde{y};x,\tilde{x}) \) as in \hyperref[unbiase_guarantee]{Lemma \ref{unbiase_guarantee}} and conditioned on \( \mathcal{F} \), it holds that

    \[
    \mathbb{E}\|u_y^x(x,\tilde{x};y,\tilde{y})- \nabla_x \phi_y^x(x)\|^2 \leq \frac{\bar{L}^2_{\phi,x} \tau}{b} [\|x - \tilde{x}\|^2+\|y-\tilde{y}\|^2],
    \]
    \[
    \mathbb{E}\|u_x^y(y,\tilde{y};x,\tilde{x})- \nabla_y \phi_x^y(y)\|^2 \leq \frac{\bar{L}^2_{\phi,y} \tau}{b} [\|y - \tilde{y}\|^2+\|x-\tilde{x}\|^2].
    \]
where \( \tau = 1 \) if \( \mathcal{S} \) is drawn with replacement and \( \tau = \frac{N-b}{N-1} \) if \( \mathcal{S} \) is drawn without replacement.
\end{thm}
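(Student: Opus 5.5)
The plan is to chain Lemma \ref{unbiase_guarantee} with Lemma \ref{control_gradient}, handling the two sampling schemes separately and unifying them through the factor $\tau$. Everything is conditioned on $\mathcal{F}$, so $x,\tilde{x},y,\tilde{y}$ act as deterministic vectors, and the only randomness left is the draw of $\mathcal{S}$ (or of a single index $i$).

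For sampling \emph{with replacement}, I would apply part (i) of Lemma \ref{unbiase_guarantee}:
\[
\mathbb{E}\|u_y^x(x,\tilde{x};y,\tilde{y})-\nabla_x\phi^x_y(x)\|^2\le \tfrac1b\,\mathbb{E}\|\zeta^x_{y,i}(x,\tilde{x};y,\tilde{y})\|^2 .
\]
Here $i$ is uniform on $[N]$ and independent of $\mathcal{F}$. Lemma \ref{control_gradient} then bounds the right-hand side by $\bar{L}_{\phi,x}^2(\|x-\tilde{x}\|^2+\|y-\tilde{y}\|^2)/b$, which is the claim with $\tau=1$.

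For sampling \emph{without replacement}, I would start from part (ii) of Lemma \ref{unbiase_guarantee}. The key observation is that $\frac1N\sum_{i=1}^N\|\zeta^x_{y,i}\|^2$ is exactly the expectation of $\|\zeta^x_{y,i}\|^2$ over a uniform index $i$. Lemma \ref{control_gradient} therefore applies again and gives the factor $\frac{N-b}{b(N-1)}\bar{L}_{\phi,x}^2(\|x-\tilde{x}\|^2+\|y-\tilde{y}\|^2)$, i.e. the claim with $\tau=\frac{N-b}{N-1}$.

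If one wants Lemma \ref{control_gradient} justified pointwise, the estimate for each fixed $i$ is direct. One writes
\[
\zeta^x_{y,i}=\bigl(\nabla g_i(x)-\nabla g_i(\tilde{x})\bigr)+\bigl(\nabla_x f_i(x,y)-\nabla_x f_i(\tilde{x},\tilde{y})\bigr),
\]
and applies (A1) to each bracket, using $\|(x,y)-(\tilde{x},\tilde{y})\|^2=\|x-\tilde{x}\|^2+\|y-\tilde{y}\|^2$ and $L^i_{\phi,x}\le\bar{L}_{\phi,x}$. The $u_x^y$ bound is the symmetric computation with $h_i$, $\nabla_y f_i$ and $\bar{L}_{\phi,y}$.

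The main obstacle is the constant in this pointwise step. The triangle inequality gives $\|\zeta^x_{y,i}\|\le L_g^i\|x-\tilde{x}\|+L_f^i\|(x,y)-(\tilde{x},\tilde{y})\|\le (L_g^i+L_f^i)\|(x,y)-(\tilde{x},\tilde{y})\|$. Squaring this yields exactly $(L^i_{\phi,x})^2(\|x-\tilde{x}\|^2+\|y-\tilde{y}\|^2)$, so no extra factor of $2$ appears and the stated constant is attained. Note that $\mu$-convexity is not needed anywhere in this argument; only smoothness is used.
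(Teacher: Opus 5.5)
Your proposal is correct and matches the paper's route: the paper obtains the theorem simply by combining Lemma~\ref{control_gradient} with parts (i) and (ii) of Lemma~\ref{unbiase_guarantee}, just as you do. In (ii) you read $\frac1N\sum_{i}\|\zeta^x_{y,i}\|^2$ as the uniform-index expectation, and your pointwise check of Lemma~\ref{control_gradient} is valid; it uses the triangle inequality with constant $L_g^i+L_f^i$, so no factor $2$ appears, and it supplies the detail the paper dismisses as following ``directly from Lipschitz smoothness.''
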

\begin{rem}
\label{rem:variance_bound}
Unlike traditional stochastic optimization methods, our analysis avoids explicit bounded variance assumptions. Instead, variance control is achieved through rigorous subproblem analysis, providing a more natural theoretical foundation for the algorithm's behavior.
\end{rem}

\subsection{Algorithm Framework}

We propose an algorithmic framework ( \hyperref[alg:SNmMSPP]{Algorithm \ref{alg:SNmMSPP}})alternates between two main steps: a variance-reduced stochastic 
gradient step and the step to solve the resulting proximal  subproblem: solving the nonsmooth equations defining $\xi^{y,k+1}_x$ and $\xi^{x,k+1}_y$. The dimension of this system, as well as the dimensions of $\xi^{y,k+1}_x$ and $\xi^{x,k+1}_y$, is controlled by the batch size $b_k$, which becomes a significant advantage when $b_k \ll n$. We allow approximate solutions to system \eqref{equation_1}, leading to inexact proximal steps.  The convergence of \hyperref[alg:SNmMSPP]{Algorithm \ref{alg:SNmMSPP}} partially depends on the solution accuracy of the subproblems; however, theoretically, the convergence rate of \hyperref[alg:SNmMSPP]{Algorithm \ref{alg:SNmMSPP}} does not impose stringent requirements on the convergence speed of the subproblems. This flexibility enables us to propose an algorithmic framework for solving subproblem without being constrained to any specific algorithm.
\begin{algorithm}[h]
\small 
\caption{SNmMSPP}
\label{alg:SNmMSPP}
\begin{algorithmic}
\Require Initial points $\tilde{x}^0, \tilde{y}^0, \tilde{\lambda}^0$; parameters $m, S, \alpha_k^s, b_k^s, \varepsilon_k^s$.
\For{$s = 0, \dots, S$}    
    \State Initialize: $x^0=x^{s,0}=\tilde{x} \leftarrow \tilde{x}^s, ~y^0=y^{s,0}=\tilde{y} \leftarrow \tilde{y}^s, ~\lambda^0=\lambda^{s,0}=\tilde{\lambda} \leftarrow \tilde{\lambda}^s$.
    \State Set step sizes $\alpha_k$, batch sizes $b_k$, tolerances $\varepsilon_k$ for inner loop.
    \For{$k = 0, \dots, m-1$}        
        \State \textbf{(Sampling)} Sample $S_k$ ($|S_k|=b_k$) and compute 
        
       \State \( v^{x,k}_{y,s}:=v^{x,k}_y,~v^{y,k}_{x,s}:=v^{y,k}_x,~\hat{v}^{x,k}_y, ~\hat{v}^{y,k}_x . \) \quad ( See \eqref{svrg_varience_reduce_term} and \eqref{hat{v}})
        
        \State \textbf{ (Update $y$)} 
        \State \quad Find $\xi^{y,k+1}_x$ via \hyperref[alg:ssn_generic]{Alg.~\ref{alg:ssn_generic}} s.t. $\|\nabla \mathcal{I}^y_x(\xi^{y,j}_x)\| \leq \varepsilon_k$ with input        
          \State     $x^k,y^k,\lambda^k,\alpha_k,S_k,-\hat{v}_x^{y,k},\varepsilon_k.$
        \State \quad $y^{k+1} \leftarrow \operatorname{prox}_{\alpha_k \psi} \big( y^k- \frac{\alpha_k}{b_k}\sum_{i=1}^{b_k}\xi_{x,i}^{y,k+1}-\hat{v}^{y,k}_x \big)$.
        
        \State \textbf{(Update $x$)} 
        \State \quad Find $\xi^{x,k+1}_y$ via \hyperref[alg:ssn_generic]{Alg.~\ref{alg:ssn_generic}} s.t. $\|\nabla \mathcal{I}^x_y(\xi^{x,j}_y)\| \leq \varepsilon_k$. with input        
         \State $x^k,y^{k+1},\lambda^k,\alpha_k,S_k,-\hat{v}_y^{x,k},\varepsilon_k.$ 
        \State \quad $x^{k+1} \leftarrow \operatorname{prox}_{\alpha_k \varphi} \big(x^k - \frac{\alpha_k}{b_k}\sum_{i=1}^{b_k}\xi_{y,i}^{x,k+1}-\hat{v}^{x,k}_y \big)$.
        
        \State \textbf{(Update $\lambda$)} 
        \State \quad $\lambda^{k+1} \leftarrow \lambda^k - \alpha_k(Ax^{k+1} + By^{k+1} + c)$.
    \EndFor    
    \State Update outer: $\tilde{x}^{s+1} \leftarrow x^m, ~\tilde{y}^{s+1} \leftarrow y^m, ~\tilde{\lambda}^{s+1} \leftarrow \lambda^m$.
\EndFor
\State \textbf{Return} $\tilde{x}^{s+1}, \tilde{y}^{s+1}, \tilde{\lambda}^{s+1}$.
\end{algorithmic}
\end{algorithm}

This potential inexactness constitutes an important component of our algorithmic design and convergence analysis, and we will address such inexact updates in \hyperref[sec:controllinexact]{Section \ref{sec:controllinexact}}.
\begin{rem}
\label{rem:constraint_satisfaction}
It is important to note that the stochastic strategy of our updates does not guarantee that the linear constraints $Ax + By + c = 0$ are satisfied at every iteration. However,  We will discuss in \hyperref[sec:convergence_analysis]{Section \ref{sec:convergence_analysis}} how to project onto the linear constraints to ensure they are satisfied within a desired tolerance throughout the algorithm.
\end{rem}

\section{A Semismooth Newton Method for Solving the Subproblem}\label{Sloving_subproblem}

In this section, we employ a deterministic semismooth Newton method to efficiently solve the subproblems arising from stochastic gradient approximations within a tolerance $\varepsilon_{\text{sub}}$. Following \cite{ulbrich2011semismooth} and \cite{qi1993nonsmooth}, we rely on the concept of surrogate generalized differentials $\hat{\partial} F$ for locally Lipschitz functions.

\begin{assump}
\label{assump:semismooth}
For a constant $\nu \in (0, 1]$, we assume the following mappings are $\nu$-order semismooth
\begin{enumerate}[label=(A\arabic*),start=5]
    \item The proximal operators $x \mapsto \mathrm{prox}_{\alpha \varphi}(x)$ on $\R^n$ and $y \mapsto \operatorname{prox}_{\alpha\psi}(y)$ on $\R^m$ for any $\alpha > 0$. \label{A5}
    \item The conjugate gradient mappings $z \mapsto \nabla (\phi_{y,i}^x)^*(z)$ on $\mathcal{D}^x_i$ and $z \mapsto \nabla (\phi_{x,i}^y)^*(z)$ on $\mathcal{D}^y_i$ for all $i$. \label{A6}
\end{enumerate}
\end{assump}

Consider a batch $\mathcal{S} = (\kappa(1), \ldots, \kappa(b)) \in [N]$,step size $\alpha > 0$, and vectors $x, v^x_y \in \R^n$, $y, v^{y,k}_x \in \R^m$, $\lambda \in \R^q$, we define the subproblem domains as $\mathcal{D}^x = \prod_{i \in \mathcal{S}} \mathcal{D}^x_i \subseteq \R^{m^x_{\mathcal{S}}}$ and $\mathcal{D}^y = \prod_{i \in \mathcal{S}} \mathcal{D}^y_i \subseteq \R^{m^y_{\mathcal{S}}}$, where $m_{\mathcal{S}}^\cdot$ denotes the cumulative dimension. 

The second part of \eqref{equation_1} is reformulated as the system 
\[
\begin{cases}
\tag{3.1}
\label{semi_smooth_equation}
\mathcal{F}^x_y(\xi^x_y)=0\\
\mathcal{F}^y_x(\xi^y_x)=0,
\end{cases}
\] where $\boldsymbol{\xi}^x = ((\xi^x_{y,1})^\top, \ldots, (\xi^x_{y,b})^\top)^\top \in \mathcal{D}^x$ (and similarly for $\boldsymbol{\xi}^y \in \mathcal{D}^y$). 
Letting $\mathcal{A}^x_{\mathcal{S}} := \frac{1}{b} [I, \ldots, I]^\top$, we define the auxiliary variables $u^x := x^k - \hat{v}^{x,k}_y - \alpha_k \mathcal{A}_{\mathcal{S}}^{x\top} \boldsymbol{\xi}^x$ and $u^y := y^k - \hat{v}^{y,k}_x - \alpha_k \mathcal{A}_{\mathcal{S}}^{y\top} \boldsymbol{\xi}^y$. The $i$-th component of the mappings is given by
\begin{equation}
\label{Define_F}
\tag{3.2}
\begin{aligned}
    \mathcal{F}_{y,i}^x(\boldsymbol{\xi}^x) &= \nabla (\phi^x_{y,\kappa(i)})^*(\xi_{y,i}^{x}) - \operatorname{prox}_{\alpha_k \varphi}(u^x), \\
    \mathcal{F}_{x,i}^y(\boldsymbol{\xi}^y) &= \nabla (\phi^y_{x,\kappa(i)})^*(\xi_{x,i}^{y}) - \operatorname{prox}_{\alpha_k \psi}(u^y).
\end{aligned}
\end{equation}
The Newton direction $d = (d^x, d^y)$ is obtained by solving
\begin{equation}
\label{newton_step}
\tag{3.3}
\mathcal{W}_x d^x = -\mathcal{F}^x_y(\boldsymbol{\xi}^x) \quad \text{and} \quad \mathcal{W}_y d^y = -\mathcal{F}^y_x(\boldsymbol{\xi}^y),
\end{equation}
where $\mathcal{W}_x \in \hat{\partial}\mathcal{F}^x_y$ and $\mathcal{W}_y \in \hat{\partial}\mathcal{F}^y_x$ are elements of the surrogate differentials
\begin{equation}
\label{surrogate_differential}
\tag{3.4}
\begin{split}
    \hat{\partial} \mathcal{F}^x_y(\boldsymbol{\xi}^x) &= \left\{ \mathbf{H}^x + \alpha b \mathcal{A}^x_{\mathcal{S}} U^x (\mathcal{A}^x_{\mathcal{S}})^\top \mid U^x \in \partial \operatorname{prox}_{\alpha \varphi}(u^x), \mathbf{H}^x \in \mathcal{H}^x(\boldsymbol{\xi}^x) \right\}, \\
    \hat{\partial} \mathcal{F}^y_x(\boldsymbol{\xi}^y) &= \left\{ \mathbf{H}^y + \alpha b \mathcal{A}^y_{\mathcal{S}} U^y (\mathcal{A}^y_{\mathcal{S}})^\top \mid U^y \in \partial \operatorname{prox}_{\alpha \psi}(u^y), \mathbf{H}^y \in \mathcal{H}^y(\boldsymbol{\xi}^y) \right\}.
\end{split}
\end{equation}
Here, $\mathcal{H}^x(\boldsymbol{\xi}^x)$ denotes the set of block-diagonal matrices $\operatorname{blkdiag}(H_1, \ldots, H_b)$ with $H_i \in \partial (\nabla (\phi^x_{y,\kappa(i)})^*)(\xi^x_{y,i})$, and similarly for $\mathcal{H}^y(\boldsymbol{\xi}^y)$.

We first present several basic properties of the operators and functions involved in the Newton step \eqref{newton_step}.

\begin{prop} \cite[Lem.3.3.5]{milzarek2016numerical}
\label{semi_definite}
Let \(\alpha > 0\) and \(x \in \R^n\),\(y \in \R^m\) be given. Each element \(U^x \in \partial \operatorname{prox}_{\alpha \varphi}(x)\) 
is a symmetric and positive semidefinite \(n \times n\) matrix. Each element \(U^y \in \partial \operatorname{prox}_{\alpha \psi}(y)\) 
is a symmetric and positive semidefinite \(m \times m\) matrix.
\end{prop}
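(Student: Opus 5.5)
The plan is to prove the claim for a general proper, closed, convex $g:\R^n\to(-\infty,\infty]$ and a general $\alpha>0$; the two assertions then follow by taking $g=\varphi$ on $\R^n$ and $g=\psi$ on $\R^m$. Here $\partial \operatorname{prox}_{\alpha g}$ denotes the Clarke generalized Jacobian. By Rademacher's theorem it is the convex hull of the B-subdifferential
\[
\partial_B \operatorname{prox}_{\alpha g}(x)=\Bigl\{\lim_{j\to\infty} \nabla \operatorname{prox}_{\alpha g}(x^j) \;:\; x^j\to x,\ x^j\in D\Bigr\},
\]
where $D$ is the full-measure set of differentiability points. Symmetry and positive semidefiniteness are both preserved under limits and convex combinations. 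It therefore suffices to show that $\nabla \operatorname{prox}_{\alpha g}(z)$ is symmetric and positive semidefinite at every $z\in D$.

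\textbf{Symmetry.} I would use the Moreau envelope identity recalled in the preliminaries, $\nabla \env_{\alpha g}(x)=\frac{1}{\alpha}\bigl(x-\operatorname{prox}_{\alpha g}(x)\bigr)$. This gives $\operatorname{prox}_{\alpha g}(x)=\nabla\bigl(\tfrac12\|x\|^2-\alpha\env_{\alpha g}(x)\bigr)$, so $\operatorname{prox}_{\alpha g}$ is the gradient of the $C^{1,1}$ function $q(x):=\tfrac12\|x\|^2-\alpha\env_{\alpha g}(x)$. At a point $z\in D$, the Jacobian of $\nabla q$ is the second derivative of $q$ in the Fréchet sense. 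A twice-differentiable function (in the sense that its gradient is differentiable) has a symmetric Hessian. This needs only differentiability of $\nabla q$ at $z$, not continuity of second derivatives, and is a standard result. So $\nabla \operatorname{prox}_{\alpha g}(z)$ is symmetric.

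\textbf{Positive semidefiniteness.} I would use firm nonexpansiveness of the proximal map:
\[
\langle \operatorname{prox}_{\alpha g}(u)-\operatorname{prox}_{\alpha g}(v),\,u-v\rangle\ge \|\operatorname{prox}_{\alpha g}(u)-\operatorname{prox}_{\alpha g}(v)\|^2\ge 0 .
\]
Take $u=z+th$ and $v=z$, divide by $t^2$, and let $t\downarrow 0$. This yields $\langle h,Jh\rangle\ge\|Jh\|^2\ge 0$ for $J=\nabla\operatorname{prox}_{\alpha g}(z)$. As a bonus, this also shows $0\preceq J\preceq I$.

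\textbf{Main obstacle.} The expected difficulty is the symmetry step, since $\operatorname{prox}_{\alpha g}$ is only Lipschitz. If one is uneasy with symmetry of a pointwise second derivative of a $C^{1,1}$ function, I would instead argue as follows. Convexity of $q$ follows from $\nabla q=\operatorname{prox}_{\alpha g}$ being monotone. One can then invoke Alexandrov-type results, or the known fact (e.g. \cite[Lem.3.3.5]{milzarek2016numerical}, Hiriart-Urruty et al.) that the Clarke generalized Hessian of a $C^{1,1}$ function consists of symmetric matrices. Finally, I would pass to limits and convex hulls to conclude for all of $\partial \operatorname{prox}_{\alpha\varphi}(x)$ and $\partial \operatorname{prox}_{\alpha\psi}(y)$.
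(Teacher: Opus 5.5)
Your proof is correct and uses essentially the same idea as the result the paper cites without proof (Milzarek's Lem.~3.3.5): $\operatorname{prox}_{\alpha g}$ is the gradient of the convex $C^{1,1}$ function $\tfrac12\|x\|^2-\alpha\env_g^{\alpha}(x)$, so its Clarke Jacobian is a generalized Hessian, which consists of symmetric positive semidefinite matrices. Your version makes this self-contained by proving symmetry of the Fr\'echet second derivative at each differentiability point, proving positive semidefiniteness via firm nonexpansiveness, and then passing to limits and convex hulls; the pointwise symmetry step is a standard theorem that needs no continuity of second derivatives, so the fallback in your last paragraph is unnecessary.
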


\begin{prop} \cite[Prop.~4.2]{milzarek2024semismooth}
\label{prop:4.2}
\noindent Suppose that the conditions \ref{essentially_differentiable} and \ref{asumme:conjugate_semidef} are satisfied. 
Let \(\mathcal{S}\) be a \(b\)-tuple of elements from \([N]\), and let \(\alpha > 0\), 
\(x, v^x_y \in \R^n\), and \(y, v^y_x \in \R^m\) be given. 
Then, the function \(\mathcal{F}^x_y\) is semismooth on \(\mathcal{D}^x\) with respect to 
\(\hat{\partial}\mathcal{F}_y^x\), and the function \(\mathcal{F}^y_x\) is semismooth on \(\mathcal{D}^y\) 
with respect to \(\hat{\partial}\mathcal{F}_x^y\). If \ref{A5} and \ref{A6} hold, then
\(\mathcal{F}_y^x\) and \(\mathcal{F}_x^y\) are \(\nu\)-order semismooth with respect to 
\(\hat{\partial}\mathcal{F}_y^x\) and \(\hat{\partial}\mathcal{F}_x^y\), respectively.
\end{prop}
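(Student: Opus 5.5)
The plan is to prove semismoothness of $\mathcal{F}^x_y$ with respect to $\hat{\partial}\mathcal{F}^x_y$ by showing that each block in \eqref{Define_F} is built from semismooth pieces by operations that preserve semismoothness. The argument for $\mathcal{F}^y_x$ is identical with $\varphi,A,\phi^x$ replaced by $\psi,B,\phi^y$. Write
\[
\mathcal{F}^x_y(\boldsymbol{\xi}^x)=G(\boldsymbol{\xi}^x)-\mathcal{A}^x_{\mathcal{S}}\,b\,\operatorname{prox}_{\alpha\varphi}(u^x(\boldsymbol{\xi}^x)),
\]
where
\[
G(\boldsymbol{\xi}^x):=\bigl(\nabla(\phi^x_{y,\kappa(1)})^*(\xi^x_{y,1}),\dots,\nabla(\phi^x_{y,\kappa(b)})^*(\xi^x_{y,b})\bigr).
\]
Note that $b\mathcal{A}^x_{\mathcal{S}}=[I,\dots,I]^\top$ simply stacks the same vector $b$ times, and that $u^x(\boldsymbol{\xi}^x)=x-\hat v^{x}_y-\alpha(\mathcal{A}^x_{\mathcal{S}})^\top\boldsymbol{\xi}^x$ is affine in $\boldsymbol{\xi}^x$.

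\textbf{First piece $G$.} By \ref{essentially_differentiable} and \ref{asumme:conjugate_semidef}, each $\nabla(\phi^x_{y,\kappa(i)})^*$ is locally Lipschitz and semismooth on the open set $\mathcal{D}^x_{\kappa(i)}$ with respect to its Clarke Jacobian. A block-separable map is then semismooth on the product $\mathcal{D}^x$ with respect to $\mathcal{H}^x$, the block-diagonal selections. To see this, take $\sup_{\mathbf H\in\mathcal H^x}\|G(\xi+s)-G(\xi)-\mathbf H s\|$ and bound it by the sum over $i$ of the blockwise remainders, each of which is $o(\|s_i\|)\le o(\|s\|)$. Directional differentiability also holds blockwise.

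\textbf{Second piece, the composition.} $\operatorname{prox}_{\alpha\varphi}$ is $1$-Lipschitz and semismooth on all of $\R^n$ (stated after Assumption~\ref{eq:assume_2.1}), and $u^x$ is affine with constant Jacobian $-\alpha(\mathcal{A}^x_{\mathcal{S}})^\top$. Composing a semismooth map with an affine one is semismooth with respect to the chain-rule set $\{U\cdot(-\alpha\mathcal{A}^{x\top}_{\mathcal{S}}) : U\in\partial\operatorname{prox}_{\alpha\varphi}(u^x)\}$. This is verified directly by substituting $t=-\alpha\mathcal{A}^{x\top}_{\mathcal{S}}s$, which satisfies $\|t\|\le C\|s\|$. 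Premultiplying by $-b\mathcal{A}^x_{\mathcal{S}}$ then produces exactly the term $\alpha b\mathcal{A}^x_{\mathcal{S}}U^x(\mathcal{A}^x_{\mathcal{S}})^\top$ appearing in \eqref{surrogate_differential}.

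\textbf{Combining and the higher-order case.} A sum of maps that are semismooth with respect to sets $M_1$ and $M_2$ is semismooth with respect to $M_1+M_2$ by the triangle inequality, and this sum is precisely $\hat{\partial}\mathcal{F}^x_y$. For the $\nu$-order claim under \ref{A5} and \ref{A6}, the same estimates go through with $o(\|s\|)$ replaced by $\mathcal{O}(\|s\|^{1+\nu})$, since all constants involved ($b$, $\alpha$, $\|\mathcal{A}^x_{\mathcal{S}}\|$) are fixed.

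\textbf{Main obstacle.} The delicate point is that $\hat{\partial}$ is a surrogate and not the Clarke Jacobian of $\mathcal{F}^x_y$. The proof must therefore avoid any chain rule for Clarke Jacobians, which generally gives only inclusions, and instead verify the semismooth remainder estimate directly for each selection, as described above. One must also ensure that $\xi+s$ stays in the open set $\mathcal{D}^x$ for small $s$, which follows from openness.
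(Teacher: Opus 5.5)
Your proposal is correct and follows the standard argument behind the cited result. The paper gives no proof of its own and simply imports \cite[Prop.~4.2]{milzarek2024semismooth}; that result rests on blockwise semismoothness of the conjugate gradients together with the affine chain rule and the sum rule for semismoothness with respect to the surrogate $\hat{\partial}\mathcal{F}^x_y$, which you verify directly instead of quoting.

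One caveat concerns the premise you take from the remark after Assumption~\ref{eq:assume_2.1}. Semismoothness of $\operatorname{prox}_{\alpha\varphi}$ does not follow from convexity of $\varphi$ or from \ref{essentially_differentiable}--\ref{asumme:conjugate_semidef}: there are closed convex sets in $\R^3$ whose metric projection (the prox of the indicator function) is not even directionally differentiable. So for the first claim it must be imposed as an additional hypothesis, whereas for the $\nu$-order claim it is supplied by \ref{A5}.
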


In the following, we demonstrate that the functions \(\mathcal{F}^x_y\) and \(\mathcal{F}^y_x\) can be interpreted as gradient mappings. 
Consequently, finding a root of \(\mathcal{F}^x_y\) and \(\mathcal{F}^y_x\) is equivalent to identifying a stationary point.

\begin{prop}
    \label{eq:prop_4.3}
Let the assumptions $\ref{asumme:L-smooth}$ and $\ref{essentially_differentiable}$ hold. Let $\alpha > 0$ and $x, d^x \in \R^n,~$and $y, d^y \in \R^m$ be given and 
let $\mathcal{S}$ be a $b$-tuple of elements of $[N]$. For $\xi^x_y \in \mathcal{D}^x,\xi^y_x \in \mathcal{D}^y $, we define
\begin{align*}
\mathcal{I}^x_y(\xi^x_y) := \sum_{i=1}^b(\phi^x_{y,k(i)})^*(\xi^x_{y,i}) + \frac{b}{2\alpha} \| z^x_y(\xi^x_y) \|^2 
- \frac{b}{\alpha} \operatorname{env}_{\alpha \varphi}(z^x_y(\xi^x_y)),
 \end{align*}
where $z^x_y(\xi^x_y) := x - \frac{\alpha}{b}\sum_{i=1}^b\xi^x_{y,i} - \hat{v}^x_y$. The function $\mathcal{I}^y_x(\boldsymbol{\xi}^y)$ is defined analogously.
Then, $\mathcal{I}^x_y ,\mathcal{I}^y_x $ are $\mu^*_x$-strongly convex, $\mu^*_y$-strongly convex  on the 
set $\mathcal{E}_x := \prod_{i=1}^b \operatorname{dom}(\phi^x_{y,k(i)})^*$, 
$\mathcal{E}_y := \prod_{i=1}^b \operatorname{dom}(\phi^y_{x,k(i)})^*$
respectively,and we have
 \[\nabla_{\xi^x_y} \mathcal{I}^x_y(\xi^x_y) = \mathcal{F}^x_y(\xi^x_y),\quad \nabla_{\xi_x^y} \mathcal{I}^y_x(\xi^y_x) = \mathcal{F}^y_x(\xi^y_x) \]
 for all $\xi^x_y \in \mathcal{D}^x$,$\xi^y_x \in \mathcal{D}^y$,
 where $\mathcal{F}^x_y$,$\mathcal{F}^y_x$ are defined in \eqref{Define_F}.
\end{prop}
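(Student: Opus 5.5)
We prove the statement for $\mathcal{I}^x_y$; the claim for $\mathcal{I}^y_x$ follows by the same argument with $(\phi^y_{x,\kappa(i)})^*$, $\psi$ and $\hat v^{y}_x$ in place of $(\phi^x_{y,\kappa(i)})^*$, $\varphi$ and $\hat v^x_y$. The idea is to split $\mathcal{I}^x_y$ into two parts. The first is the separable part $\Phi(\xi):=\sum_{i=1}^b(\phi^x_{y,\kappa(i)})^*(\xi_i)$. The second is the composite part $\Psi(\xi):=\frac{b}{\alpha}\,q(z(\xi))$, where $q(z):=\frac12\|z\|^2-\operatorname{env}_{\alpha\varphi}(z)$ and $z(\xi)=x-\frac{\alpha}{b}\sum_i\xi_i-\hat v^x_y$ is affine in $\xi$. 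We treat the gradient and the strong convexity of each part separately.

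\textbf{Gradient.} By (A3) each conjugate is differentiable on $\mathcal{D}^x_i$, so $\nabla_{\xi_i}\Phi(\xi)=\nabla(\phi^x_{y,\kappa(i)})^*(\xi_i)$ for $\xi\in\mathcal{D}^x$. For the composite part we use the envelope identity from the preliminaries, written with the paper's convention $\operatorname{env}_{\alpha\varphi}(z)=\min_w\{\varphi(w)+\frac1{2\alpha}\|z-w\|^2\}$. This gives $\nabla\operatorname{env}_{\alpha\varphi}(z)=\frac1\alpha(z-\operatorname{prox}_{\alpha\varphi}(z))$.

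The scaling has to match this convention, and this is the step that needs care. If the envelope is read with unit parameter, the term $\frac b\alpha\operatorname{env}$ must be reinterpreted as $b\,\operatorname{env}^{\alpha}_{\varphi}$ so that the constants cancel. Under that reading, $\nabla_z\big[\frac b{2\alpha}\|z\|^2-\frac b\alpha\operatorname{env}_{\alpha\varphi}(z)\big]=\frac b\alpha\operatorname{prox}_{\alpha\varphi}(z)$. Since $\partial z/\partial\xi_i=-\frac\alpha b I$, the chain rule gives
\[
\nabla_{\xi_i}\Psi(\xi)=-\operatorname{prox}_{\alpha\varphi}(z(\xi)).
\]
Adding the two gradients yields $\nabla_{\xi_i}\mathcal{I}^x_y=\nabla(\phi^x_{y,\kappa(i)})^*(\xi_i)-\operatorname{prox}_{\alpha\varphi}(u^x)$, because $z(\xi)=u^x$. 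This is exactly $\mathcal{F}^x_{y,i}$ in \eqref{Define_F}.

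\textbf{Convexity of the composite part.} We show that $q$ is convex. Since $\frac12\|z\|^2=\operatorname{env}_{\alpha\varphi}(z)+\alpha\,\operatorname{env}_{\varphi^*/\alpha}(z/\alpha)$-type Moreau decomposition holds, $q$ is (a scaling of) the Moreau envelope of the conjugate, hence convex. A more direct route also works. The map $\nabla q=\operatorname{prox}_{\alpha\varphi}$ (up to the positive scaling fixed above) is firmly nonexpansive and therefore monotone. A differentiable function with monotone gradient is convex. Composing the convex $q$ with the affine map $z(\cdot)$ keeps it convex, so $\Psi$ is convex on the whole space.

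\textbf{Strong convexity of the separable part.} By (A1), each $\phi^x_{y,\kappa(i)}$ is $L^{\kappa(i)}_{\phi,x}$-smooth and convex. By the standard duality between smoothness and strong convexity (Beck, Thm.~5.26), its conjugate is $1/L^{\kappa(i)}_{\phi,x}$-strongly convex on its domain. Hence $\Phi$ is $\min_i 1/L^{\kappa(i)}_{\phi,x}\ge 1/\bar L_{\phi,x}$-strongly convex on $\mathcal{E}_x$. Identifying $\mu_x^*$ with this constant, as indicated in the remark leading to \eqref{eq:positive_definite}, the sum $\mathcal{I}^x_y=\Phi+\Psi$ is $\mu^*_x$-strongly convex on $\mathcal{E}_x$.

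The only nontrivial point is the convexity of $\Psi$, and it reduces to monotonicity of the proximal map. The rest is the chain rule together with the conjugate duality, taking care to keep the factor $\frac b\alpha$ consistent with the envelope convention.
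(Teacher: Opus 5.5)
Your proof is correct and follows the paper's route: the chain rule with $\partial z/\partial\xi_i=-\frac{\alpha}{b}I$, strong convexity of the conjugates from smoothness of $\phi^x_{y,\kappa(i)}$, and convexity of the envelope term composed with an affine map. The paper proves that convexity via Moreau's identity, and your direct argument that $\nabla q=\operatorname{prox}_{\alpha\varphi}$ is monotone is an equally valid substitute. One note on conventions: the paper's $\operatorname{env}_{\alpha\varphi}$ is the unit-parameter envelope $\min_w\{\alpha\varphi(w)+\frac12\|z-w\|^2\}$, whose gradient is $z-\operatorname{prox}_{\alpha\varphi}(z)$, so your ``reinterpretation'' as $b\,\operatorname{env}^{\alpha}_{\varphi}$ is simply the literal reading rather than an adjustment.
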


\begin{proof}
 For every $\xi^x_y \in \mathcal{D}^x$ and $i \in [b]$, we have 
 \[
 \frac{\partial z^x_y}{\partial \xi^x_{y,i}}(\xi^x_y) = -\frac{\alpha}{b},\quad  \frac{\partial z^y_x}{\partial \xi^y_{x,i}}(\xi^y_x) = -\frac{\alpha}{b}
 \]
Hence,
\begin{align*}
\nabla_{\xi_{y,i}^x} \mathcal{I}^x_y(\xi^x_y) 
&= \nabla_{\xi_{y,i}^x} (\phi^x_y)_{k(i)}^*(\xi^x_{y,i}) 
-z_y^x(\xi^x_y)+ (z_y^x(\xi^x_y)-\prox_{\alpha \varphi}z^x_y(\xi^x_y)) \\
&= \nabla_{\xi_{y,i}^x} (\phi^x_y)_{k(i)}^*(\xi^x_{y,i}) -\prox_{\alpha \varphi}z_y^x(\xi^x_y)=\mathcal{F}^x_{y,i}(\xi^x_y)
\end{align*}
Similarly,we have
\(
\nabla_{\xi_{x,i}^y} \mathcal{I}^y_x(\xi^y_x)=\nabla_{\xi_{x,i}^y} (\phi^y_x)_{k(i)}^*(\xi^y_{x,i})
 -\prox_{\alpha \psi}z^y(\xi^y_x)=\mathcal{F}^y_{x,i}(\xi^y_x).
\)

For convexity, Assumption \ref{asumme:L-smooth} implies that $\sum (\phi^x_{y,\kappa(i)})^*$ is $\mu^*_x$-strongly convex. For the remaining term, we apply Moreau's identity \cite[Thm. 6.67]{beck2017first}
\begin{equation*}
    \frac{b}{2\alpha} \| z^x_y \|^2 - \frac{b}{\alpha} \operatorname{env}_{\alpha \varphi}(z^x_y) = \alpha b \operatorname{env}_{\alpha^{-1} \varphi^*}(z_y^x/\alpha)
\end{equation*}
Since the Moreau envelope of a proper closed convex function is convex \cite[Thm. 6.55]{beck2017first} and convexity is preserved under affine composition ($u^x$ is affine in $\boldsymbol{\xi}^x$), the second term is convex. Thus, $\mathcal{I}^x_y$ is $\mu^*_x$-strongly convex as the sum of a strongly convex function and a convex function.
Similarly, $\mathcal{I}_x^y$ is $\mu_y^*$-strongly convex on $\mathcal{E}_y$.
\end{proof}

In \hyperref[alg:ssn_generic]{Algorithm \ref{alg:ssn_generic}},
 we formulate a globalized semismooth 
Newton method for solving the nonsmooth system \eqref{semi_smooth_equation}.
 Specifically, the result in \hyperref[eq:prop_4.3]{ Proposition \ref{eq:prop_4.3}} 
 enables us to measure descent properties of a semismooth Newton step
  using $\mathcal{I}^x_y,\mathcal{I}^y_x $ and to apply Armijo line search-based globalization techniques. Based on the results on SC1 minimization
   (cf. \cite{zhao2010newton,facchinei1995minimization,j2016proximal}), we obtain the following convergence result.

\begin{thm}\label{thm:sub_conver}
Let the assumptions \ref{asumme:L-smooth}-\ref{asumme:conjugate_semidef} be satisfied and let the sequence $\{\xi^{x,k+1}_y\}$ and $\{\xi^{y,k+1}_x\}$ be 
generated by \hyperref[alg:ssn_generic]{Algorithm \ref{alg:ssn_generic}}.
 Then, $\{\xi^{x,k+1}_y\}$ and $\{\xi^{y,k+1}_x\}$ are converges q-superlinearly to the unique solution $\hat{\xi}^x_y \in \mathcal{D}^x$ and
  $\hat{\xi}^y_x \in \mathcal{D}^y$ of \eqref{semi_smooth_equation} respectively, i.e.,
\begin{align*}
    &\|\xi^{x,k+1}_y - \hat{\xi}^x_y\| = o(\|\xi^{x,k}_y - \hat{\xi}^x_y\|), \quad \text{as } k \rightarrow \infty,\\
    &\|\xi^{y,k+1}_x - \hat{\xi}^y_x\| = o(\|\xi^{y,k}_x - \hat{\xi}^y_x\|), \quad \text{as } k \rightarrow \infty.
\end{align*}
Moreover, under \ref{A5} and \ref{A6}, we obtain
\begin{align*}
&\|\xi^{x,k+1}_y - \hat{\xi}^x_y\| = O(\|\xi^{x,k}_y - \hat{\xi}^x_y\|^{1+\min \{\tau,\nu\}}), \quad \text{for all } k \text{ sufficiently large.}\\
&\|\xi^{y,k+1}_x - \hat{\xi}^y_x\| = O(\|\xi^{y,k}_x - \hat{\xi}^y_x\|^{1+\min \{\tau,\nu\}}), \quad \text{for all } k \text{ sufficiently large.}
\end{align*}
\end{thm}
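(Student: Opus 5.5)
The plan is to treat each of the two subproblems separately, since $\mathcal{F}^x_y$ and $\mathcal{F}^y_x$ are decoupled once $x^k,y^{k+1},\lambda^k$ (respectively $x^k,y^k,\lambda^k$) are frozen. We then reduce each to the standard SC$^1$ minimization framework of \cite{facchinei1995minimization,zhao2010newton}. By Proposition~\ref{eq:prop_4.3}, $\mathcal{F}^x_y=\nabla\mathcal{I}^x_y$ on $\mathcal{D}^x$, and $\mathcal{I}^x_y$ is $\mu^*_x$-strongly convex on $\mathcal{E}_x$. Hence the root of \eqref{semi_smooth_equation} is the unique minimizer $\hat\xi^x_y$ of $\mathcal{I}^x_y$. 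Essential differentiability in \ref{essentially_differentiable} ensures that the minimizer lies in $\mathcal{D}^x=\operatorname{int}\mathcal{E}_x$, because the gradient blows up at the boundary. This gives existence and uniqueness. The $y$-part is identical with $\mathcal{I}^y_x$, $\mu^*_y$, and $\operatorname{prox}_{\alpha\psi}$.

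\textbf{Global convergence.} First, every $\mathcal{W}_x\in\hat\partial\mathcal{F}^x_y(\xi)$ is uniformly positive definite, since $\mathbf{H}^x\succeq\mu^*_x I$ by \eqref{eq:positive_definite} and $\alpha b\mathcal{A}^x_{\mathcal{S}}U^x(\mathcal{A}^x_{\mathcal{S}})^\top\succeq0$ by Proposition~\ref{semi_definite}. The Newton system \eqref{newton_step} is therefore uniquely solvable, and $d^x$ is a descent direction with $\langle\nabla\mathcal{I}^x_y,d^x\rangle\le-c\|d^x\|^2$ and $\|d^x\|\ge C^{-1}\|\mathcal{F}^x_y\|$. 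The latter uses local boundedness of $\mathcal{W}_x$, which follows from local Lipschitz continuity of $\nabla(\phi^x_{y,i})^*$ and the nonexpansiveness of the prox.

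The level set $\{\mathcal{I}^x_y\le\mathcal{I}^x_y(\xi^0)\}$ is compact by strong convexity. It is contained in $\mathcal{D}^x$ by essential differentiability, so the gradient is Lipschitz on it. The Armijo line search then produces step sizes bounded away from zero. The standard argument gives $\mathcal{F}^x_y(\xi^j)\to0$, and with strong convexity, $\xi^j\to\hat\xi^x_y$.

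\textbf{Local rate.} Near $\hat\xi^x_y$ we use Proposition~\ref{prop:4.2}: $\mathcal{F}^x_y$ is semismooth with respect to $\hat\partial\mathcal{F}^x_y$, and uniformly invertible generalized Jacobians give the usual estimate
\[
\|\xi+d-\hat\xi\|\le\|\mathcal{W}_x^{-1}\|\,\|\mathcal{F}^x_y(\xi)-\mathcal{F}^x_y(\hat\xi)-\mathcal{W}_x(\xi-\hat\xi)\|=o(\|\xi-\hat\xi\|).
\]
Under \ref{A5}--\ref{A6} this becomes $O(\|\xi-\hat\xi\|^{1+\nu})$. If the inner Newton system is solved inexactly with forcing term $\eta_j\lesssim\|\mathcal{F}\|^{\tau}$, this yields the exponent $1+\min\{\tau,\nu\}$.

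The remaining and most delicate step is to show that the unit step $t=1$ is eventually accepted by the Armijo rule. For this I would invoke the Facchinei-type acceptance result for SC$^1$ functions \cite{facchinei1995minimization}. It requires semismoothness of $\nabla\mathcal{I}^x_y$ at $\hat\xi$, positive definiteness of all $\mathcal{W}_x$ at $\hat\xi$ (shown above), and an Armijo parameter $\sigma<1/2$. Expanding $\mathcal{I}(\xi+d)-\mathcal{I}(\xi)$ by a second-order Taylor-like bound via semismoothness gives $\mathcal{I}(\xi+d)-\mathcal{I}(\xi)\le\tfrac12\langle\mathcal{F}(\xi),d\rangle+o(\|d\|^2)$, which yields acceptance. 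The main obstacle is justifying this expansion with the surrogate differential $\hat\partial$ rather than Clarke's. I expect Proposition~\ref{prop:4.2} to supply exactly the needed semismoothness with respect to $\hat\partial$, and the identical argument applies to $\xi^{y}_x$.
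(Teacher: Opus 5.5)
Your proposal is correct and follows essentially the paper's route: Proposition~\ref{eq:prop_4.3} turns each subproblem into minimizing a strongly convex SC$^1$ function, and \eqref{eq:positive_definite} with Proposition~\ref{semi_definite} makes the $\hat\partial$-elements uniformly positive definite. The global-convergence and semismooth Newton estimates are the standard ones the paper cites from Zhao--Sun--Toh, and unit-step acceptance comes from Facchinei's Theorem~3.3 after a uniform sufficient-descent bound.

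Two justifications should be tightened. First, the sublevel set lies in $\mathcal{D}^x$ not because of essential differentiability, which does not by itself keep sublevel sets away from the boundary. It lies there because each $\phi^x_{y,i}$ is strongly convex under \ref{asumme:L-smooth}, so $(\phi^x_{y,i})^*$ has full domain and a globally Lipschitz gradient. Second, the descent inequality for the regularized, CG-computed direction of Algorithm~\ref{alg:ssn_generic} does not follow from exact solvability of \eqref{newton_step}. It comes from the CG identity $\langle\mathcal{F}_j,d^j\rangle=-\langle(\mathcal{W}_j+\eta_jI)d^j,d^j\rangle$, which is what the paper's appeal to Zhao--Sun--Toh, Prop.~3.3, supplies.
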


\begin{proof}
By construction , we have $\{\xi^{x,k+1}_y\}\subseteq D^x$ and set $D^x$ is open , and 
$\{\xi^{y,k+1}_x\}\subseteq D^y$ and set $D^y$ is open. \hyperref[eq:prop_4.3]{Proposition \ref{eq:prop_4.3}}
  and \ref{essentially_differentiable}
 imply that $\mathcal{I}^x_y,\mathcal{I}^y_x $ are $\mu^*_x$-strongly convex and $\mu^*_y$-strongly convex
  on $D^x$ and $D^y$ respectively and essentially differentiable. 
  Hence, $\mathcal{I}^x_y,\mathcal{I}^y_x $ have a unique minimizer $\hat{\xi}^x_y \in D_x $,$\hat{\xi}^y_x \in D_y$ 
  respectively. $\forall \xi_y^x \in D_x ,\xi_x^y \in D_y $, the matrices $W_x(\xi_y^x) \in \hat{\partial} F_y^x(\xi_y^x)$
 and $W_y(\xi_x^y) \in\hat{\partial} F^y_x(\xi^y_x)$ are positive definte by \eqref{eq:iteration_form} and 
 \hyperref[semi_definite]{Proposition \ref{semi_definite}}.
 Using standard arguments ( see \cite{zhao2010newton} Thm3.4 and \cite{li2018highly} Thm3.6), it can be shown that sequence $\{\xi_y^{x,k}\}
,\{\xi_x^{y,k}\}$ generated by \hyperref[alg:ssn_generic]{Algorithm \ref{alg:ssn_generic}} converges to $\hat{\xi}^x_y,\hat{\xi}^y_x.$
Similarly, under \ref{asumme:L-smooth}-\ref{asumme:conjugate_semidef}, we conclude from equation (67) in the proof of [\cite{zhao2010newton}, Thm. 3.5] that 
\[\|\xi^{x,j}_y+d_j^x-\hat{\xi}^x_y\|\leq o(\|\xi^{x,j}_y-\hat{\xi}^x_y\|),
\|\xi^{y,j}_x+d_j^y-\hat{\xi}^y_x\|\leq o(\|\xi^{y,j}_x-\hat{\xi}^y_x\|)\] holds for all j sufficiently large. 
If assumptions \ref{A5} and \ref{A6} are satisfied, then we have  
\(\|\xi_y^{x,j}+d_j^x-\hat{\xi}^x_y\|\leq O(\|\xi^{x,j}_y-\hat{\xi}^x_y\|^{1+\min\{\tau,v\}}),\quad
\|\xi_x^{y,j}+d_j^y-\hat{\xi}^y_x\|\leq O(\|\xi^{y,j}_y-\hat{\xi}^y_x\|^{1+\min\{\tau,v\}}).\)
Finally, let us show that in a neiborhood of the limit point that unit step size is accepted by the armijo line search.
Setting \(\tilde{W}_{x,j}:= W_x+\eta_j^x I,\tilde{W}_{y,j}:= W_y+\eta_j^y I\)  and \(F_y^x(\xi_y^{x,j}) \to 0,F_y^x(\xi_y^{x,j}) \to 0\)
we can infer
\(
\|d_j^x\|=\|\tilde{W}^{-1}_{x,j}(r_j^x-F_y^x(\xi_y^{x,i}))\|
\leq \|\tilde{W}^{-1}_{x,j}\|(\|r_j^x\|+\|F_y^x(\xi_y^{x,j})\|)
\leq  2\lambda_{\min}(\tilde{W}_{x,j})^{-1} \|F_y^x(\xi_y^{x,j})\|,
\)
Similarly, we have
\begin{small}\(
\|d_j^y\|\leq  2\lambda_{\min}(\tilde{W}_{y,j})^{-1} \|F_x^y(\xi_x^{y,j})\|
\)\end{small}
for all  j  sufficiently large. Thus, we have
\begin{align*}
-\frac{\left\langle F^x_y(\xi_y^{x,j}), \bm{d}_{j}^x\right\rangle}{\left\|\bm{d}^{x}_j\right\|^{2}} 
\geq \frac{\lambda_{\min}\left(\widetilde{\mathcal{W}}_{x,j}\right)^{2}}{4} 
\frac{\left\langle - F^x_y(\xi^{x,j}_y), \bm{d}^{x}_j\right\rangle}{\left\|F_y^x \left(\xi^{x,j}_y\right)\right\|^{2}} 
\geq \frac{\lambda_{\min}\left(\widetilde{\mathcal{W}}_{x,j}\right)^{2}}{4 \lambda_{\max}\left(\widetilde{\mathcal{W}}_{x,j}\right)}, 
\end{align*}
Similarly, we have
\begin{small}
\(
-\frac{\left\langle F^y_x(\xi_x^{y,j}), \bm{d}_{j}^y\right\rangle}{\left\|\bm{d}^{y}_j\right\|^{2}} 
\geq \frac{\lambda_{\min}\left(\widetilde{\mathcal{W}}_{y,j}\right)^{2}}{4 \lambda_{\max}\left(\widetilde{\mathcal{W}}_{y,j}\right)}, 
\)\end{small}
\noindent where the second inequality comes from [\cite{zhao2010newton} Prop.3.3] . Due to strong convexity, there exists $\tilde{\rho}>0$ such that 
$$\frac{\lambda_{\min}\left(\widetilde{\mathcal{W}}_{x,j}\right)^{2}}{4\lambda_{\max}\left(\widetilde{\mathcal{W}}_{x,j}\right)} \geq \tilde{\rho}>0, \quad
\frac{\lambda_{\min}\left(\widetilde{\mathcal{W}}_{y,j}\right)^{2}}{4\lambda_{\max}\left(\widetilde{\mathcal{W}}_{y,j}\right)} \geq \tilde{\rho}>0$$
for all $j$. Thanks to [\cite{facchinei1995minimization} Thm3.3], $\beta^x_{j}=\beta^y_j=1$ then fulfills the Armijo condition for $j$ sufficiently large which concludes the proof.
\end{proof}

\begin{algorithm}[h]
\caption{Semismooth Newton Method for Subproblems}
\label{alg:ssn_generic}
\small 
\begin{algorithmic} 
\Require 
Target $u \in \{x, y\}$ with fixed $v \in \{x,y\} \setminus \{u\}$; 
variance reduction vector $\hat{v}_v^u$; 
batch $\mathcal{S} \subseteq [N]$; 
step size $\alpha > 0$, tolerance $\varepsilon_{sub}$.
\State \textbf{Initialize:} $\boldsymbol{\xi}^{u,0}_v \in \mathcal{D}^u$ with $\xi_{v,i}^{u,0} \in \mathcal{D}^u_i ,~\forall i \in \{1,...,b\}.$
\State \textbf{Parameters:} $\hat{\gamma} \in (0,0.5), \eta, \rho, \tau_1, \tau_2 \in (0,1), \tau \in (0,1]$.
\State Set $j = 0$. Define current objective $\mathcal{I}(\cdot) := \mathcal{I}^u_v(\cdot)$ and mapping $\mathcal{F}(\cdot) := \mathcal{F}^u_v(\cdot)$.
\While{$\|\mathcal{F}(\boldsymbol{\xi}^j)\| > \varepsilon_{sub}$} 
    \State \textbf{(Newton Direction)} Compute $\mathcal{F}_j := \mathcal{F}(\boldsymbol{\xi}^j)$ and select $\mathcal{W}_j \in \hat{\partial}\mathcal{F}(\boldsymbol{\xi}^j)$.    
    \State Set $\eta_j := \tau_1 \min\{\tau_2, \|\mathcal{F}_j\|\}$. Solve
     $$(\mathcal{W}_j + \eta_j I)d^j = -\mathcal{F}_j$$ 
    \State via conjugate gradient method such that 
     \( \| (\mathcal{W}_j + \eta_j I)d^j + \mathcal{F}_j \| \leq \min\{\eta_j, \|\mathcal{F}_j\|^{1+\tau}\}. \)        
    \State \textbf{(Line Search)} Find smallest integer $\ell_j \ge 0$ satisfying     
     \[ \mathcal{I}(\boldsymbol{\xi}^j + \rho^{\ell_j}d^j) \leq \mathcal{I}(\boldsymbol{\xi}^j) + \hat{\gamma}\rho^{\ell_j}\langle\nabla \mathcal{I}(\boldsymbol{\xi}^j), d^j \rangle \]
    \Statex \hspace*{1em} with $\boldsymbol{\xi}^j + \rho^{\ell_j} d^j \in \mathcal{D}^u$.     
    \State \textbf{(Update)} Set $\boldsymbol{\xi}^{j+1} = \boldsymbol{\xi}^j + \rho^{\ell_j} d^j$ and $j \gets j + 1$.
\EndWhile
\State \textbf{Return} $\boldsymbol{\xi}^j$.
\end{algorithmic}
\end{algorithm}

\begin{rem}
The tolerance $\varepsilon_{\mathrm{sub}}$ in each call of \hyperref[alg:ssn_generic]{ Algorithm~\ref{alg:ssn_generic}} corresponds to the tolerance $\varepsilon_k$ in \hyperref[alg:SNmMSPP]{Algorithm~\ref{alg:SNmMSPP}}.
\end{rem}

\section{Controlling the Inexactness of the Update}\label{sec:controllinexact}

The strong convexity of the objective functions plays a crucial role in controlling the approximation error. Specifically, $\mathcal{I}^x_y$ and $\mathcal{I}^y_x$ are $\mu_x^*$- and $\mu_y^*$-strongly convex, respectively. Let $\hat{\xi}^x_y$ and $\hat{\xi}^y_x$ denote their unique minimizers.
Since the gradients of strongly convex functions are strongly monotone, and noting that the gradients vanish at the optimal solutions, we have
\[
\mu_x^* \| \xi^x_y - \hat{\xi}^x_y \|^2 \leq \langle \nabla \mathcal{I}^x_y(\xi^x_y), \xi^x_y - \hat{\xi}^x_y \rangle \leq \|\nabla \mathcal{I}^x_y(\xi^x_y)\| \|\xi^x_y - \hat{\xi}^x_y\|, \quad \forall \xi^x_y \in \mathcal{D}_x,
\]
and similarly for $\mathcal{I}^y_x$. This immediately yields the error bounds
\begin{equation}\label{eq:xi_gap}
\| \xi^x_y - \hat{\xi}^x_y\| \leq \frac{1}{\mu_x^*} \| \nabla \mathcal{I}^x_y(\xi^x_y) \|, 
\quad 
\| \xi^y_x - \hat{\xi}^y_x\| \leq \frac{1}{\mu_y^*} \| \nabla \mathcal{I}^y_x(\xi^y_x) \|. \tag{4.1}
\end{equation}
Thus, the stopping criterion $\|\nabla \mathcal{I}(\cdot)\| \leq \varepsilon_{\text{sub}}$ in \hyperref[alg:ssn_generic]{Algorithm~\ref{alg:ssn_generic}} effectively limits the distance between the approximate and exact solutions.

\begin{thm} \label{prop:control_inexact}
Let the assumptions of \hyperref[eq:prop_4.3]{Proposition \ref{eq:prop_4.3}} hold. Let $\hat{\xi}^x_y$ and $\hat{\xi}^y_x$ be the unique minimizers of $\mathcal{I}^x_y$ and $\mathcal{I}^y_x$, respectively. 
Suppose the algorithm terminates with $\|\nabla \mathcal{I}\| \leq \varepsilon_{\text{sub}}$ and returns approximations $\xi^x_y, \xi^y_x$. 
Let $x^+, y^+$ be the computed updates and $\hat{x}^+, \hat{y}^+$ be the exact updates (using $\hat{\xi}$ instead of $\xi$). Then
\(
    \| \hat{\xi}_{x}^{y} - \xi_{x}^{y} \| \leq \frac{\varepsilon_{\text{sub}}}{\mu_y^*}, 
    \| \hat{\xi}_{y}^{x} - \xi_{y}^{x} \| \leq \frac{\varepsilon_{\text{sub}}}{\mu_x^*},
\)
and the propagation errors are bounded by
\(
    \| x^+ - \hat{x}^+ \| \leq \frac{\alpha}{\mu_x^* b} \varepsilon_{\text{sub}}, \quad 
    \| y^+ - \hat{y}^+ \| \leq  \frac{\alpha}{\mu_y^* b} \varepsilon_{\text{sub}}.
\)
\end{thm}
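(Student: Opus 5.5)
The plan is to first translate the termination criterion into a distance bound on the dual variables $\boldsymbol{\xi}$. Then push that bound through the explicit prox-formula for the primal updates using nonexpansiveness of the proximal operator. The $x$- and $y$-parts are symmetric, so I will write the argument for $x$ and note that the $y$-case follows by replacing $(\varphi,\mu_x^*,\mathcal{I}^x_y)$ with $(\psi,\mu_y^*,\mathcal{I}^y_x)$.

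\emph{Step 1: distance of the dual variables.} By Proposition~\ref{eq:prop_4.3}, $\mathcal{I}^x_y$ is $\mu_x^*$-strongly convex with $\nabla\mathcal{I}^x_y=\mathcal{F}^x_y$, and $\hat{\xi}^x_y$ is its unique minimizer, so $\nabla\mathcal{I}^x_y(\hat\xi^x_y)=0$. Strong monotonicity of the gradient, as already displayed before \eqref{eq:xi_gap}, gives $\|\xi^x_y-\hat\xi^x_y\|\le \|\nabla\mathcal{I}^x_y(\xi^x_y)\|/\mu_x^*$. Inserting the stopping rule $\|\nabla\mathcal{I}^x_y(\xi^x_y)\|\le\varepsilon_{\text{sub}}$ of Algorithm~\ref{alg:ssn_generic} yields the first pair of claimed inequalities immediately.

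\emph{Step 2: propagation to the primal update.} The computed and exact updates share the same anchor $x$ and the same correction $\hat v^x_y$:
\[
x^+=\operatorname{prox}_{\alpha\varphi}\Bigl(x-\tfrac{\alpha}{b}\textstyle\sum_{i=1}^b\xi^x_{y,i}-\hat v^x_y\Bigr),\qquad
\hat x^+=\operatorname{prox}_{\alpha\varphi}\Bigl(x-\tfrac{\alpha}{b}\textstyle\sum_{i=1}^b\hat\xi^x_{y,i}-\hat v^x_y\Bigr).
\]
Since $\operatorname{prox}_{\alpha\varphi}$ is $1$-Lipschitz, we get $\|x^+-\hat x^+\|\le \tfrac{\alpha}{b}\bigl\|\sum_{i}(\xi^x_{y,i}-\hat\xi^x_{y,i})\bigr\|$. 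It remains to compare this sum with the block vector, which can be written as $\tfrac{\alpha}{b}\|\sum_i(\cdot)\|=\alpha\|(\mathcal{A}^x_{\mathcal{S}})^\top(\xi-\hat\xi)\|$. Combining with Step 1 then gives a bound of the form $\alpha\,\|(\mathcal{A}^x_{\mathcal{S}})^\top\|\,\varepsilon_{\text{sub}}/\mu_x^*$.

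\emph{Main obstacle: the factor of $b$.} A direct Cauchy--Schwarz estimate, i.e.\ $\|(\mathcal{A}^x_{\mathcal{S}})^\top\|=1/\sqrt b$, only gives $\tfrac{\alpha}{\sqrt b\,\mu_x^*}\varepsilon_{\text{sub}}$, not the stated $\tfrac{\alpha}{b\,\mu_x^*}\varepsilon_{\text{sub}}$. I would first try to recover the sharper constant by interpreting the stopping test blockwise, $\|\mathcal{F}^x_{y,i}\|\le\varepsilon_{\text{sub}}/b$ for each $i$, or equivalently $\|\nabla\mathcal{I}\|\le\varepsilon_{\text{sub}}/\sqrt b$. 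Alternatively, I would exploit the structure of $\mathcal{I}^x_y$: the coupling term depends on $\boldsymbol{\xi}$ only through $\sum_i\xi_i$, so one can apply strong convexity to the reduced variable $\sum_i\xi_i$ rather than the full block vector. If neither route gives the factor $1/b$, I would state the result with $1/\sqrt b$, which is equally sufficient for the subsequent convergence analysis since only $\varepsilon_{\text{sub}}\to0$ at a suitable rate is needed. The $y$-estimate is obtained identically from the prox-formula for $y^+$ with $\operatorname{prox}_{\alpha\psi}$.
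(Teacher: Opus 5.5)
Your Step~1 and the nonexpansiveness estimate in Step~2 are the paper's argument. The obstacle you flag is genuine, and it is exactly where the paper's proof breaks. The paper writes $\bigl\|\frac{\alpha}{b}\sum_{i=1}^b(\hat\xi^x_{y,i}-\xi^x_{y,i})\bigr\|\le\frac{\alpha}{b}\|\hat\xi^x_y-\xi^x_y\|$, i.e.\ $\|\sum_i v_i\|\le\|(v_1,\dots,v_b)\|$. This fails whenever the blocks are aligned: equal blocks cost a factor $\sqrt b$.

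Moreover, the stated propagation bound is false under the given hypothesis, not merely unproven. Take $n=1$, $\varphi\equiv0$, $g_i(x)=\frac{L}{2}x^2$, $f_i\equiv0$, and choose the anchor so that $x-\hat v^x_y=0$. Then $\nabla(\phi^x_{y,i})^*(\xi)=\xi/L$, one may take $\mu_x^*=1/L$, and $\hat\xi^x_y=0$, $\hat x^+=0$. For $\xi^x_y=t(1,\dots,1)$, every block of $\mathcal{F}^x_y(\xi^x_y)$ equals $t(1+\alpha L)/L$. Hence $\|\nabla\mathcal{I}^x_y(\xi^x_y)\|=\sqrt b\,|t|(1+\alpha L)/L$, while $x^+=-\alpha t$. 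Choose $t$ so that this residual equals $\varepsilon_{\text{sub}}$; Algorithm~\ref{alg:ssn_generic} returns such a point immediately if it is the initial iterate. This gives
\[
\|x^+-\hat x^+\|=\frac{\alpha L}{\sqrt b\,(1+\alpha L)}\,\varepsilon_{\text{sub}}=\frac{\sqrt b}{1+\alpha L}\cdot\frac{\alpha}{\mu_x^*b}\,\varepsilon_{\text{sub}},
\]
which exceeds the claimed bound as soon as $b>(1+\alpha L)^2$.

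Consequences for your plan:
\begin{itemize}
\item Route (b) cannot succeed. No use of the structure of $\mathcal{I}^x_y$ yields the factor $1/b$ from $\|\nabla\mathcal{I}^x_y\|\le\varepsilon_{\text{sub}}$ alone, and the example shows that the $1/\sqrt b$ dependence on $b$ is sharp.
\item Route (a) works only as a change of hypothesis. Tightening the stopping test to $\|\nabla\mathcal{I}^x_y\|\le\varepsilon_{\text{sub}}/\sqrt b$ gives exactly $\frac{\alpha}{\mu_x^*b}\varepsilon_{\text{sub}}$. The blockwise test $\|\mathcal{F}^x_{y,i}\|\le\varepsilon_{\text{sub}}/b$ implies this test but is not equivalent to it, contrary to what you wrote.
\item Your fallback $\|x^+-\hat x^+\|\le\frac{\alpha}{\sqrt b\,\mu_x^*}\varepsilon_{\text{sub}}$, and likewise for $y$, is the correct statement. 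The first pair of inequalities stands as you proved it.
\item As you say, the weaker constant only changes the constants in (H.2a)--(H.2b), and hence in $c^x(\alpha)$ and $c^y(\alpha)$. It does not affect the conclusion of Theorem~\ref{thm:opt_q-linear_convergence}.
\end{itemize}
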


\begin{proof}
The bounds on $\|\hat{\xi} - \xi\|$ follow directly from \eqref{eq:xi_gap}. 
For the update steps, since $\operatorname{prox}_{\alpha \varphi}$ is non-expansive, we have
\begin{align*}
\| x^+ - \hat{x}^+ \| 
&= \left\| \operatorname{prox}_{\alpha \varphi} \left( x - 
\frac{\alpha}{b} \sum_{i=1}^b\hat{\xi}_{y,i}^{x}-\hat{v}^{x}_y \right) - 
\operatorname{prox}_{\alpha \varphi} \left( x - \frac{\alpha}{b} \sum_{i=1}^b \xi_{y,i}^{x} - \hat{v}^x_y \right) \right\| \\
&\leq \left\| \frac{\alpha}{b} \sum_{i=1}^b (\hat{\xi}_{y,i}^{x} - \xi_{y,i}^{x}) \right\| \\
&\leq \frac{\alpha}{b} \| \hat{\xi}_{y}^{x} - \xi_{y}^{x} \| \\
&\leq \frac{\alpha}{\mu_x^* b} \varepsilon_{\text{sub}}.
\end{align*}
The bound for $\| y^+ - \hat{y}^+ \|$ follows analogously.
\end{proof}

\section{Convergence Analysis}\label{sec:convergence_analysis}

In this section, we establish q-linear convergence of \hyperref[alg:SNmMSPP]{Algorithm \ref{alg:SNmMSPP}} . We derive 
- similar to Thm.3.1 in \cite{yang2015sdpnal+} - convergence in terms of the objective function. We suppose that in iteration $s$ of the outer
 and iteration $k$ of the inner loop of \hyperref[alg:SNmMSPP]{Algorithm \ref{alg:SNmMSPP}}, the tolerances $\varepsilon_k^s$ satisfy the bound
\begin{equation*}
    \tag{5.1}
\label{tolerance_bound}
\varepsilon_k^s \leq \delta_s \| F_{\text{y,nat}}^x(\tilde{x}^s) \| \quad \text{and} \quad \varepsilon_k^s \leq \delta_s \| F_{x,\text{nat}}^y(\tilde{y}^s) \|
\end{equation*}
for all $k \in \{0, \ldots, m-1\}$, $s \in \mathbb{N}$, and for some sequence $\mathbb{R}_+ \ni \delta_s \to 0$. 
Since $\nabla f(\tilde{x}^s)$ 
is known, $\| F_{\text{y,nat}}^x(\tilde{x}^s) \|$ and $\| F_{x,\text{nat}}^y(\tilde{y}^s) \|$ can be computed without additional costs.

\begin{assump}[Unified Error Bound for Dual Variables]
\label{ass:unified_error_bound}
Suppose there exists a constant $\kappa > 0$ such that for the iterates $x^{s,k}, y^{s,k}, \lambda^{s,k}$
generated by \hyperref[alg:SNmMSPP]{Algorithm \ref{alg:SNmMSPP}}, 
the following inequalities hold simultaneously
\begin{equation*}
\begin{aligned}
\|\lambda^{s,k} - \lambda^*\| &\leq \kappa \|A x^{s,k} + B y^{s,k} + c\|, \quad \forall \, 0 \leq s \leq S, \, 0 \leq k < m,
\end{aligned}
\end{equation*}
\end{assump}

\begin{Lemma}
\label{lem:young_inq_gene}
For any \(a, b, c \in \R^n\) and \(\rho \in (0,1)\), the following inequality holds
\[
\|a - b\|^2 \geq (1 - \rho)\|c-b\|^2 + \left(1 - \frac{1}{\rho}\right)\|a-c\|^2.
\]
\end{Lemma}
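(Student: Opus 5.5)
We prove Lemma~\ref{lem:young_inq_gene} by splitting $a-b$ through the intermediate point $c$ and applying Young's inequality to the cross term. Write $u := c-b$ and $w := a-c$, so that $a-b = u + w$. Expanding the square gives
\[
\|a-b\|^2 = \|u\|^2 + 2\langle u, w\rangle + \|w\|^2 .
\]
The claimed inequality is a lower bound, so the plan is to bound the cross term $2\langle u,w\rangle$ from below in a way that produces the weights $(1-\rho)$ and $(1-1/\rho)$.

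For the key step we use the weighted Young (Cauchy--Schwarz) inequality in its lower-bound form. For any $\rho>0$,
\[
2\langle u,w\rangle \geq -\rho\|u\|^2 - \frac{1}{\rho}\|w\|^2 .
\]
This follows from
\[
0 \le \bigl\|\sqrt{\rho}\,u + \tfrac{1}{\sqrt{\rho}}\,w\bigr\|^2 = \rho\|u\|^2 + 2\langle u,w\rangle + \tfrac{1}{\rho}\|w\|^2 .
\]
Substituting this bound into the expansion yields
\[
\|a-b\|^2 \ge (1-\rho)\|u\|^2 + \Bigl(1-\frac{1}{\rho}\Bigr)\|w\|^2 .
\]
Replacing $u$ and $w$ by their definitions gives exactly the asserted inequality.

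There is no real obstacle. The only points to check are the sign convention, since the coefficient $1-1/\rho$ is negative for $\rho\in(0,1)$, and that $u$ is matched with $c-b$ and $w$ with $a-c$, so that $\rho$ multiplies the correct term. The hypothesis $\rho\in(0,1)$ is not needed for validity; any $\rho>0$ works. The restriction matters only in later use, where $1-\rho>0$ lets the inequality be used as a genuine lower bound on $\|c-b\|^2$.
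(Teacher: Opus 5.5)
Your proposal is correct and follows essentially the same argument as the paper: split $a-b=(a-c)+(c-b)$, expand the square, and bound the cross term below by $-\rho\|c-b\|^2-\frac{1}{\rho}\|a-c\|^2$. You also verify the weighted Young inequality directly from a square, and you correctly observe that the argument is valid for every $\rho>0$, not only $\rho\in(0,1)$.
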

\begin{proof}
Expanding the squared norm and applying Young's inequality $2\langle u, v \rangle \geq -\frac{1}{\rho}\|u\|^2 - \rho\|v\|^2$ with $u=a-c$ and $v=c-b$, we obtain\begin{small}
\(
\|a - b\|^2 = \|(a-c) + (c-b)\|^2 \geq \|a-c\|^2 + \|c-b\|^2 - \frac{1}{\rho}\|a-c\|^2 - \rho\|c-b\|^2 
= \left(1 - \frac{1}{\rho}\right)\|a-c\|^2 + (1 - \rho)\|c-b\|^2.
\)\end{small}
\end{proof}

\begin{thm}
    \label{thm:opt_q-linear_convergence}
 Let \hyperref[eq:assume_2.1]{Assumption \ref{eq:assume_2.1}}, \hyperref[eq:assume_2.2]{Assumption \ref{eq:assume_2.2}} and
 \hyperref[ass:unified_error_bound]{Assumption \ref{ass:unified_error_bound}}be satisfied
  and let $L(x,y,\lambda)$  be $\mu_x$-strongly convex w.r.t x and $\mu_y$-strongly concave w.r.t y .
  We set $\mu_{min}:=\{\mu_x,\mu_y\},\mu_{max}=\max\{\mu_x,\mu_y\}.$

Consider \hyperref[alg:SNmMSPP]{Algorithm \ref{alg:SNmMSPP}} with $S = \infty$ and Option I, using constant step sizes
 $\alpha_k^s = \alpha > 0$ and constant batch sizes $b_k^s = b$. Assume that
 \begin{align*}
\alpha < \min \left\{ 
\left( 
\bar{L}_{\phi,y} 
\right. \right. & + \frac{\sqrt{2m(m-1)}\,\bar{L}_{\phi,y}}{\sqrt{b}} + \frac{\sqrt{m(m-1)}\,\bar{L}_{\phi,x}}{\sqrt{2b}} 
\left. \vphantom{\frac{\sqrt{2m(m-1)}}{\sqrt{b}}} \right)^{-1}, \\
& \left( 
\bar{L}_{\phi,x} 
+ \frac{\sqrt{2m(m-1)}\,\bar{L}_{\phi,x}}{\sqrt{b}} + \frac{\sqrt{m(m-1)}\,\bar{L}_{\phi,y}}{\sqrt{2b}} 
\left. \vphantom{\frac{\sqrt{2m(m-1)}}{\sqrt{b}}} \right)^{-1} 
\right\}
\end{align*}
 and let \eqref{tolerance_bound} hold for a given sequence $\{\delta_s\}$ satisfying 
 $$\delta_s <\min \left\{ \frac{1+2\alpha \mu_{min}}{1+\alpha \mu_{max}} ,
 \frac{2\alpha \mu_x}{1+\alpha \mu_x},\frac{2\alpha \mu_y}{1+\alpha \mu_y} \right\}$$ 
 for all $s$. 
 Then, the iterates $\{ \tilde{x}^s,\tilde{y}^s \}$ converge $q$-linearly in expectation to the unique solution $(x^*,y^*)$ of
  problem \eqref{problem1}, i.e., as $s \to \infty$, we have
\begin{align*}
     & \E\|(\tilde{x}^{s+1},\tilde{y}^{s+1})-(x^*,y^*)\|^2\\
\leq&\left( 1-\frac{ 2\alpha\mu_{min} }{1 + 2\alpha \mu_{min}}+O(\delta_s)\right)\E\|(\tilde{x}^{s},\tilde{y}^{s})-(x^*,y^*)\|^2, \quad \text{as}\quad s \to \infty.
\end{align*}
\noindent Meanwhile, the sequence $\{\tilde{\lambda}^s\}$ also r-linearly converges in expectation to the Lagrange multiplier $\lambda^*$ of problem \eqref{problem1}.

\end{thm}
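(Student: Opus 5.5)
The plan is to follow the standard SVRG/proximal-point Lyapunov analysis used for \cite{milzarek2024semismooth}-type methods, adapted to the saddle-point setting with the multiplier step. For the inner loop we write $w^k=(x^k,y^k)$, $w^*=(x^*,y^*)$. The first step is to compare the computed update $(x^{k+1},y^{k+1})$ with the exact implicit update $(\hat x^{k+1},\hat y^{k+1})$ from \eqref{(3.3)}. Theorem~\ref{prop:control_inexact} together with \eqref{tolerance_bound} gives
\[
\|x^{k+1}-\hat x^{k+1}\|\le \frac{\alpha\delta_s}{\mu_x^* b}\|F^x_{y,\text{nat}}(\tilde x^s)\|.
\]
Since $F_{\text{nat}}$ is Lipschitz and vanishes at the saddle point, this is at most $O(\delta_s)\|(\tilde x^s,\tilde y^s)-w^*\|$, and the same holds for $y$. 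With this bound in hand, the error terms can be carried along and absorbed at the end using Lemma~\ref{lem:young_inq_gene} with $\rho\sim\delta_s$. That step is the source of the $O(\delta_s)$ term in the rate.

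Next comes the one-step inequality for the exact update. The prox characterization of $\hat x^{k+1}$ gives a subgradient inclusion. We combine it with the same inclusion at $x^*$, where $0\in\partial\varphi(x^*)+\nabla_x\phi^x_{y^*}(x^*)+A^\top\lambda^*$. Monotonicity of $\partial\varphi$ then yields
\[
\|\hat x^{k+1}-x^*\|^2+2\alpha\mu_x\|\hat x^{k+1}-x^*\|^2\le \|x^k-x^*\|^2-\|\hat x^{k+1}-x^k\|^2+2\alpha\langle \text{noise}_k+A^\top(\lambda^*-\lambda^k),x^*-\hat x^{k+1}\rangle .
\]
Here the strong convexity of $\phi^x_{y,\mathcal S}$ comes from (A1). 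We do the same for $y$ using strong concavity. Summing the two inequalities gives a factor $(1+2\alpha\mu_{\min})$ on the left. The bilinear terms $\langle\lambda^*-\lambda^k,A(\hat x^{k+1}-x^*)+B(\hat y^{k+1}-y^*)\rangle$ are equal to $\langle \lambda^*-\lambda^k, A\hat x^{k+1}+B\hat y^{k+1}+c\rangle$ because $Ax^*+By^*+c=0$. The multiplier update turns this into a telescoping difference of $\|\lambda-\lambda^*\|^2$, up to an $\alpha^2\|Ax+By+c\|^2$ term. Assumption~\ref{ass:unified_error_bound} lets us bound the leftover constraint-violation terms.

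The noise term is split as $u^x_y(\cdot)$ minus the true gradient, with $u^x_y$ as in Lemma~\ref{unbiase_guarantee}, plus the implicit mismatch. The implicit mismatch arises because the stochastic gradient is evaluated at $(x^{k+1},y^{k+1})$ and, for the $y$-step, at $x^k$ rather than $x^{k+1}$. Conditional on $\mathcal F_k$ we take expectations and use unbiasedness. We then control the cross term by Cauchy--Schwarz and Theorem~\ref{thm:variance_reduce_1}, which gives
\[
\E\|u-\nabla\|^2\le \frac{\bar L^2\tau}{b}\|w^{k}-\tilde w\|^2 .
\]
The drift $\|w^k-\tilde w\|^2\le k\sum_{j<k}\|w^{j+1}-w^j\|^2$ is summed over $k<m$, which produces the factor $m(m-1)/2$. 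This is where the $\sqrt{2m(m-1)/b}$ and $\sqrt{m(m-1)/(2b)}$ terms come from, the $x$–$y$ coupling giving the cross terms with $\bar L_{\phi,x}$ and $\bar L_{\phi,y}$. The step-size condition is exactly what is needed for the negative terms $-\|\hat w^{k+1}-w^k\|^2$ to dominate these variance and drift terms. After this we obtain $\E\|w^{k+1}-w^*\|^2\le(1+2\alpha\mu_{\min})^{-1}\E\|w^k-w^*\|^2+(\text{telescoping}\ \lambda\ \text{terms})+O(\delta_s)\|\tilde w^s-w^*\|^2$. Iterating over one epoch with $w^0=\tilde w^s$ gives the claimed factor $1-\frac{2\alpha\mu_{\min}}{1+2\alpha\mu_{\min}}+O(\delta_s)$. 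The upper bounds on $\delta_s$ in the statement keep this factor below $1$.

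The r-linear convergence of $\tilde\lambda^s$ follows from Assumption~\ref{ass:unified_error_bound}:
\[
\|\tilde\lambda^{s}-\lambda^*\|\le\kappa\|A(\tilde x^s-x^*)+B(\tilde y^s-y^*)\|\le\kappa\|[A\ B]\|\,\|\tilde w^s-w^*\|,
\]
and the right-hand side converges q-linearly in expectation, so the left-hand side converges r-linearly. The main obstacle I expect is the handling of the $\lambda$ cross terms. They need not telescope cleanly in expectation, because $\lambda^k$ lags one step behind, and the sign must be controlled. My first attempt would be to use Assumption~\ref{ass:unified_error_bound} to bound them by $\alpha\kappa\|Ax+By+c\|^2$ and absorb them into the strong-convexity margin. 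If that margin is insufficient, I would fall back on a Lyapunov function $\|w-w^*\|^2+\|\lambda-\lambda^*\|^2$.
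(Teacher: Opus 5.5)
\textbf{There is a gap in the multiplier cross terms, and it comes from a sign, not from the one-step lag.}

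The $x$-step descends with $+A^\top\lambda^k$, but the $y$-step \emph{ascends} with $+B^\top\lambda^k$. The saddle-point inclusions are $-A^\top\lambda^*-\nabla_x\phi^x_{y^*}(x^*)\in\partial\varphi(x^*)$ and $B^\top\lambda^*-\nabla_y\phi^y_{x^*}(y^*)\in\partial\psi(y^*)$. Run your monotonicity argument carefully in both blocks and add. The bilinear part of the right-hand side is then
\[
-2\alpha\bigl\langle \lambda^k-\lambda^*,\ A(\hat x^{k+1}-x^*)-B(\hat y^{k+1}-y^*)\bigr\rangle .
\]
The $B$-block carries a minus sign, so this is not $\langle\lambda^*-\lambda^k,A\hat x^{k+1}+B\hat y^{k+1}+c\rangle$.

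Now add the dual identity
$\|\lambda^{k+1}-\lambda^*\|^2=\|\lambda^k-\lambda^*\|^2-2\alpha\langle\lambda^k-\lambda^*,A(x^{k+1}-x^*)+B(y^{k+1}-y^*)\rangle+\alpha^2\|Ax^{k+1}+By^{k+1}+c\|^2$.
Up to the $O(\delta_s)$ inexactness, the $B$-part cancels but the $A$-part doubles, leaving $-4\alpha\langle\lambda^k-\lambda^*,A(x^{k+1}-x^*)\rangle$. Reversing the dual step would cancel $A$ and double $B$ instead. The reason is that $\lambda$ is paired bilinearly with a minimizing and a maximizing variable at once. So no grouping of the variables makes the Lagrangian convex--concave, unless $A=0$ or $B=0$.

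Consequently, neither your telescoping step nor your fallback Lyapunov function $\|w-w^*\|^2+\|\lambda-\lambda^*\|^2$ closes. Your other fallback, Assumption~\ref{ass:unified_error_bound}, bounds the term by $2\alpha\kappa\|[A\ B]\|^2\|w^k-w^*\|\,\|\hat w^{k+1}-w^*\|$. That term can be absorbed only under an extra hypothesis such as $\kappa\|[A\ B]\|^2<\mu_{\min}$. It then gives the factor $(1+\alpha\kappa\|[A\ B]\|^2)/(1+2\alpha\mu_{\min}-\alpha\kappa\|[A\ B]\|^2)$, not the stated one.

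The Gauss--Seidel mismatch ($x^k$ instead of $\hat x^{k+1}$ in the $y$-step) causes a similar loss in your route. It produces the term $2\alpha L_f\|x^k-\hat x^{k+1}\|\,\|\hat y^{k+1}-y^*\|$, and absorbing it costs an $O(\alpha^2L_f^2)$ share of the $2\alpha\mu$ margin.

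\textbf{How the paper handles the same terms.} The paper never isolates these terms; its route is based on function values. It uses:
\begin{itemize}
\item strong convexity of $\Gamma^{x,k+1}(\cdot,y^{k+1},\lambda^k)+\frac{1}{2\alpha}\|\cdot-x^k\|^2$;
\item the expansion (5.7a);
\item the convexity/descent-lemma step (5.8a), which moves the sampling noise to $x^k$;
\item Young's inequality with $\rho_2$, and Lemma~\ref{lem:young_inq_gene} with $\rho_1=\delta_s$.
\end{itemize}
Throughout, $\lambda^k$ stays inside $2\alpha[L(x^*,y^{k+1},\lambda^k)-L(x^{k+1},y^{k+1},\lambda^k)]$ in (C.2a). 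This bracket is then bounded by $-\alpha\mu_x\|x^{k+1}-x^*\|^2$ ``by strong convexity'', and symmetrically in $y$. That is where the second $\alpha\mu$ in $1+2\alpha\mu_{\min}$ comes from.

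Strong convexity yields that bound only if $x^*$ minimizes $L(\cdot,y^{k+1},\lambda^k)$. In general there is an extra linear term with slope $A^\top(\lambda^k-\lambda^*)+\nabla_x f(x^*,y^{k+1})-\nabla_x f(x^*,y^*)$, which is exactly what your route exposes. Substituting the paper's inequality for your telescoping step would reproduce its conclusion, but it does not supply the missing mechanism.

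\textbf{Further points.}
\begin{itemize}
\item The drift terms $\|w^k-\tilde w^s\|^2$ are controlled only after summing the one-step inequalities over $k=0,\dots,m-1$, as the paper does. Iterating a per-step contraction is not available.
\item Noise evaluated at $y^{k+1}$, or paired with $\hat x^{k+1}$, is not conditionally unbiased, because both depend on $\mathcal S_k$. The paper has the same issue when it applies Theorem~\ref{thm:variance_reduce_1} with $y=y^{k+1}$.
\item Your $r$-linear multiplier argument matches the paper's. Your constant $\kappa\|[A\ B]\|$ is the correct one; the paper's $\max\{\|A\|,\|B\|\}$ is missing a factor $\sqrt2$.
\end{itemize}
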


\begin{proof}
 Fix \( s \in \mathbb{N}_0 \) and let \( k \in \{0, \ldots, m-1\} \) be given.
  Let again \( (\hat{x}^{k+1},\hat{y}^{k+1}, \hat{\xi}^{k+1}) \) denote the pair of exact solutions of (\ref{(3.3)}) and 
  \eqref{equation_1}. Due to 

\[
\begin{cases*}
\tag{5.2}
\label{eq:exact xi}
\hat{\xi}_{y,i}^{x,k+1} = \nabla g_{\kappa(i)}(\hat{x}^{k+1}) + \nabla_x f_{k(i)} (\hat{x}^{k+1},\hat{y}^{k+1}),\\
\hat{\xi}_{x,i}^{y,k+1} =  \nabla h_{\kappa(i)}(\hat{y}^{k+1}) - \nabla_y f_{k(i)} (\hat{x}^{k},\hat{y}^{k+1}) \quad i \in [b],
\end{cases*}
\]
 we have 
\(
\hat{x}^{k+1} = \operatorname{prox}_{\alpha \varphi}\!\left( x^k - \frac{\alpha}{b}\sum_{i\in \mathcal{S}_k}\hat{\xi}^{x,k+1}_{y,i} - \hat{v}^{x,k+1}_y  \right),\) the update for $y$ is obtained similarly.
\(\hat{y}^{k+1} = \operatorname{prox}_{\alpha \psi}\!\left( y^k - \frac{\alpha}{b}\sum_{i\in \mathcal{S}_k}\hat{\xi}^{y,k+1}_{x,i} - \hat{v}^{y,k+1}_x  \right).\)
Furthermore, introducing
\( L^x_k(x,y^{k+1},\lambda^{k}) := L_{\mathcal{S}_k}(x,y^{k+1},\lambda^{k}) + \langle v^{x,k}_y, x - x^k \rangle, \)
\( L^y_k(x^k,y,\lambda^{k}) := L_{\mathcal{S}_k}(x^k,y,\lambda^{k}) + \langle v^{y,k}_x, y - y^k \rangle, \)
 the underlying optimality condition of the proximity operator  implies 
\[
\begin{aligned}
p = \hat{x}^{k+1} \quad &\iff \quad p \in x^k-\alpha_k\bigl(\nabla g_{\mathcal{S}_k}(p) + \nabla_x f_{\mathcal{S}_k}(p, y^{k+1}) + A^\top \lambda^k - \partial \varphi(p) + v^{x,k}_y\bigr) \\
&\iff \quad p = \operatorname{prox}_{\alpha L^x_k}\!\left( x^k \right), 
\end{aligned}
\]
Similarly, $q = \hat{y}^{k+1} \iff  q = \operatorname{prox}_{\alpha L^y_k}\!\left( y^k \right)$.
Moreover, using \hyperref[eq:assume_2.1]{Assumption \ref{eq:assume_2.1}}, the mapping
\(
x \mapsto \phi^x_{y,S_k}(x)=g_{\mathcal{S}_k}(x)+f_{\mathcal{S}_k}(x,y)=\frac{1}{b}\sum_{i\in \mathcal{S}_k}(g_{i}(x)+f_{i}(x,y))\)   is $\mu_x$-strongly convex,
\(y \mapsto \phi^y_{x,S_k}(y)\)  is $\mu_y$-strongly convex.

Hence,setting 
\[
\begin{cases*}
    \Gamma^{x,k+1}(x,y,\lambda)=g_{\mathcal{S}_k}(x)+f_{\mathcal{S}_k}(x,y)+\varphi(x)+(\lambda)^\top Ax+\langle v^{x,k}_y,x-x^k\rangle,\\
    \Gamma^{y,k+1}(x,y,\lambda)=-h_{\mathcal{S}_k}(y)+f_{\mathcal{S}_k}(x,y)-\psi(y)+(\lambda)^\top By+\langle v^{y,k}_x,y-y^k\rangle,
\end{cases*}
\]
then $x \mapsto \Gamma^{x,k+1}(x,y^{k+1},\lambda^k)+\frac{1}{2\alpha}\|x-x^k\|^2$ is $(\mu_x+\frac{1}{\alpha})$-strongly convex,
and $y \mapsto \Gamma^{y,k+1}(x^k,y,\lambda^k)-\frac{1}{2\alpha}\|y-y^k\|^2$ is $(\mu_y+\frac{1}{\alpha})$-strongly concave,
and due to 
\(
\hat{x}^{k+1}=\arg\underset{x}{\min} \{\Gamma^{x,k+1}(x,y^{k+1},\lambda^k)+\frac{1}{2\alpha}\|x-x^k\|^2\},
\hat{y}^{k+1}=\arg\underset{y}{\max} \{\Gamma^{y,k+1}(x^{k},y,\lambda^k)-\frac{1}{2\alpha}\|y-y^k\|^2\},
\)
it follows
\begin{subequations}
\begin{align}
\Gamma^{x,k+1}(x,y^{k+1},\lambda^k)+\frac{1}{2\alpha}\|x-x^k\|^2 \ge {} & \Gamma^{x,k+1}(\hat{x}^{k+1},y^{k+1},\lambda^k)+\frac{1}{2\alpha}\|x^{k+1}-x^k\|^2 \notag \\
& +\frac{1}{2\alpha}\left(\mu_x+\frac{1}{\alpha}\right)\|x-\hat{x}^{k+1}\|^2, \tag{5.3a} \label{eq:Gamma strongly convex}\\
-\Gamma^{y,k+1}(x^k,y,\lambda^k)+\frac{1}{2\alpha}\|y-y^k\|^2 \ge {} & -\Gamma^{y,k+1}(x^{k},\hat{y}^{k+1},\lambda^k)+\frac{1}{2\alpha}\|\hat{y}^{k+1}-y^k\|^2 \notag\\
& +\frac{1}{2\alpha}\left(\mu_y+\frac{1}{\alpha}\right)\|y-\hat{y}^{k+1}\|^2, \tag{5.3b} \label{eq:Gamma strongly concave}
\end{align}
\end{subequations}
Next, combining the optimality condition, the update rule of 
\hyperref[alg:SNmMSPP]{Algorithm \ref{alg:SNmMSPP}}, and (\ref{eq:exact xi}) gives

\begin{align*}
x^{k+1} &\in x^k - \alpha\bigl( \nabla g_{\mathcal{S}_k}(x^{k+1}) + \nabla_x 
f_{\mathcal{S}_k}(x^{k+1}, y^{k+1}) + A^\top \lambda^k + \partial \varphi(x^{k+1}) + v^{x,k}_y \bigr) \\
&= x^k-\alpha H_y^{ x,k+1}-\alpha\partial_x \Gamma ^{x,k+1}(x^{k+1},y^{k+1},\lambda^k).
\end{align*}

Similarly, we have
\(
y^{k+1} \in y^k + \alpha\bigl( -\nabla h_{\mathcal{S}_k}(y^{k+1}) + \nabla_y f_{\mathcal{S}_k}(x^{k}, y^{k+1}) + B^\top \lambda^k - \partial \psi(y^{k+1}) + v^{y,k}_x \bigr) 
= y^k-\alpha H_x^{ y,k+1}+\alpha\partial_y \Gamma ^{y,k+1}(x^{k},y^{k+1},\lambda^k),
\)
Where we let
\begin{equation}
\label{eq:Hxky-definition}
\tag{H.1}
\begin{aligned}
H^{x,k+1}_y &:= (\mathcal{A}_{\mathcal{S}_k}^x)^\top(\xi_y^{x,k+1} - \hat{\xi}_y^{x,k+1}) + \nabla g_{\mathcal{S}_k}(\hat{x}^{k+1}) - \nabla g_{\mathcal{S}_k}(x^{k+1}) \\
&\quad + \nabla_x f_{\mathcal{S}_k}(\hat{x}^{k+1}, \hat{y}^{k+1}) - \nabla_x f_{\mathcal{S}_k}(x^{k+1}, y^{k+1}),\\
H^{y,k+1}_x &:= (\mathcal{A}_{\mathcal{S}_k}^y)^\top(\xi_x^{y,k+1} - \hat{\xi}_x^{y,k+1}) + \nabla h_{\mathcal{S}_k}(\hat{y}^{k+1}) - \nabla h_{\mathcal{S}_k}(y^{k+1}) \\
&\quad + \nabla_y f_{\mathcal{S}_k}(x^{k}, y^{k+1}) - \nabla_y f_{\mathcal{S}_k}(\hat{x}^{k}, \hat{y}^{k+1}).
\end{aligned}
\end{equation}

\noindent This shows that
\[
\begin{cases*}
\label{eq:change partial Gamma}
\tag{5.4}
x^k-x^{k+1}-\alpha H^{x,k+1}_y  \in \alpha \partial_x \Gamma^{x,k+1}(x^{k+1},y^{k+1},\lambda^k)\\
y^{k+1}-y^{k}+\alpha H^{y,k+1}_x  \in \alpha \partial_y \Gamma^{y,k+1}(x^k,y^{k+1},\lambda^k)
\end{cases*}
\]
Thus, due to the strong convexity of ~$ \Gamma^{x,k+1}(x,y^{k+1},\lambda^k)$ and applying\\
 $-\langle a,b \rangle=\frac{1}{2}\|a-b\|^2-\frac{1}{2}\|a\|^2-\frac{1}{2}\|b\|^2$, it follows
 
\begin{align*}
 &\Gamma^{x,k+1}(z_x,y^{k+1},\lambda^k)-\Gamma^{x,k+1}(x^{k+1},y^{k+1},\lambda^k) \tag*{(using \eqref{eq:change partial Gamma})}\\
\ge & \frac{1}{\alpha}\langle x^k-x^{k+1},z_x-x^{k+1}\rangle-\langle H^{x,k+1}_y,z_x-x^{k+1}\rangle+\frac{\mu_x}{2}\|z_x-x^{k+1}\|^2 \\
=&-\langle H^{x,k+1}_y,z_x-x^{k+1}\rangle +\frac{1}{2}(\mu_x+\frac{1}{\alpha})\|z_x-x^{k+1}\|^2-\frac{1}{2\alpha}\|x^k-z_x\|^2\\
&+\frac{1}{2\alpha}\|x^k-x^{k+1}\|^2,
\tag{5.5a}\label{eq:ge Gamma^x}
\end{align*}
 for all $z_x \in dom(\varphi)$ ,due to the strong concavity of ~$ \Gamma^{y,k+1}(x^k,y,\lambda^k)$ and applying
 $-\langle a,b \rangle=-\frac{1}{2}\|a+b\|^2+\frac{1}{2}\|a\|^2+\frac{1}{2}\|b\|^2$, it follows that
 \begin{align*}
&-\Gamma^{y,k+1}(x^k,z_y,\lambda^k)-(-\Gamma^{y,k+1}(x^{k},y^{k+1},\lambda^k))\\
\ge &-\langle H^{y,k+1}_x,z_y-y^{k+1}\rangle +\frac{1}{2}(\mu_y+\frac{1}{\alpha})\|z_y-y^{k+1}\|^2-\frac{1}{2\alpha}\|y^k-z_y\|^2\\
&+\frac{1}{2\alpha}\|y^k-y^{k+1}\|^2,  \tag{5.5b}\label{eq:ge Gamma^y}
\end{align*}
for all $z_y \in dom(\psi)$.
By substituting \( z_x = \hat{x}^{k+1} \) into \(\eqref{eq:ge Gamma^x}\) and then applying this estimate 
to \(\eqref{eq:Gamma strongly convex}\), followed by Young's inequality, we obtain
 \begin{align*}
&\frac{1}{2}(\mu_x+\frac{1}{\alpha})\|x-\hat{x}^{k+1}\|^2-\frac{1}{2\alpha}\|x-x^k\|^2 \\
\leq & \Gamma^{x,k+1}(x,y^{k+1},\lambda^k)-\Gamma^{x,k+1}(\hat{x}^{k+1},y^{k+1},\lambda^k)-\frac{1}{2\alpha}\|\hat{x}^{k+1}-x^k\|^2\\
\leq &\Gamma^{x,k+1}(x,y^{k+1},\lambda^k)-\Gamma^{x,k+1}(x^{k+1},y^{k+1},\lambda^k)+\frac{\alpha}{2} \|H^{x,k+1}_y\|^2\\
&-\frac{1}{2}\mu_x\|\hat{x}^{k+1}-x^{k+1}\|^2-\frac{1}{2\alpha}\|x^k-x^{k+1}\|^2.\tag{5.6a}\label{eq:ge Gamma^x2} 
\end{align*}

Similarly, by substituting \( z_y = \hat{y}^{k+1} \) into \(\eqref{eq:ge Gamma^y}\) and then applying this estimate
 to \(\eqref{eq:Gamma strongly concave}\), followed by Young's inequality, we obtain
 \begin{align*}
&\frac{1}{2}(\mu_y+\frac{1}{\alpha})\|y-\hat{y}^{k+1}\|^2-\frac{1}{2\alpha}\|y-y^k\|^2 \\
\leq&\frac{\alpha}{2}\|H^{y,k+1}_x\|^2-\frac{1}{2}\mu_y\|\hat{y}^{k+1}-y^{k+1}\|^2-\frac{1}{2\alpha}\|y^k-y^{k+1}\|^2
\\
&+[-\Gamma^{y,k+1}(x^k,y,\lambda^k)+\Gamma^{y,k+1}(x^{k},y^{k+1},\lambda^k)].
 \tag{5.6b}\label{eq:ge Gamma^y2}
 \end{align*}

Next, we expand the first term on the right hand side in (\ref{eq:ge Gamma^x2}) as follows
\begin{align*}
&\Gamma^{x,k+1}(x, y^{k+1},\lambda^k) - \Gamma^{x,k+1}(x^{k+1}, y^{k+1},\lambda^k)\\ 
=& L(x, y^{k+1}, \lambda^k) - L(x^{k+1}, y^{k+1}, \lambda^k) + \left( g_{\mathcal{S}_k}(x) - g(x) \right) + \left( f_{\mathcal{S}_k}(x, y^{k+1}) - f(x, y^{k+1}) \right) \\
&\quad + \left( \phi^x_{y^{k+1}, \mathcal{S}_k}(x^k) - \phi^x_{y^{k+1}, \mathcal{S}_k}(x^{k+1}) \right) + \left( g(x^{k+1}) - g(x^k) \right)+ \left( g(x^k) - g_{\mathcal{S}_k}(x^k) \right) \\
&\quad  + \left( f(x^{k+1}, y^{k+1}) - f(x^k, y^{k+1}) \right) + \left( f(x^k, y^{k+1}) - f_{\mathcal{S}_k}(x^k, y^{k+1}) \right) + \langle v^{x,k}_y, x - x^{k+1} \rangle. \\
\tag{5.7a}
\label{eq:ex Gamma^x}
\end{align*}

Similarly, we expand the first term on the right hand side in (\ref{eq:ge Gamma^y2}) as follows
\begin{align*}
&-\Gamma^{y,k+1}(x^k, y,\lambda^k) + \Gamma^{y,k+1}(x^{k}, y^{k+1},\lambda^k)\\
=& L(x^k, y^{k+1}, \lambda^k) - L(x^{k}, y, \lambda^k) + \left( h_{\mathcal{S}_k}(y) - h(y) \right) + \left( f(x^k, y) - f_{\mathcal{S}_k}(x^k, y) \right) \\
&\quad + \left( \phi^y_{x^{k}, \mathcal{S}_k}(y^{k}) - \phi^y_{x^{k}, \mathcal{S}_k}(y^{k+1}) \right) + \left( h(y^{k+1}) - h(y^k) \right)+ \left( h(y^k) - h_{\mathcal{S}_k}(y^k) \right) \\
&\quad  + \left( f_{\mathcal{S}_k}(x^{k}, y^{k}) - f(x^k, y^{k}) \right) + \left( f(x^k, y^{k}) - f(x^k, y^{k+1}) \right) + \langle v^{y,k}_x, y^{k+1} - y \rangle. \\
\tag{5.7b}
\label{eq:ex Gamma^y}
\end{align*}

Combining the convexity of $\phi^x_{y^{k+1}, \mathcal{S}_k}$ and $\phi^y_{x^k, \mathcal{S}_k}$ with the descent lemma (due to the smoothness of $\phi^x_y$ and $\phi^y_x$), we obtain
\begin{subequations}
\label{eq:ex_Gamma_inter}
\begin{align}
&\phi^x_{y^{k+1}, \mathcal{S}_k}(x^k) - \phi^x_{y^{k+1}, \mathcal{S}_k}(x^{k+1}) + \left[ g(x^{k+1}) - g(x^k) \right] + \left[ f(x^{k+1}, y^{k+1}) - f(x^k, y^{k+1}) \right] \notag \\
\leq{} & \langle x^{k+1} - x^k, \nabla g(x^k) - \nabla g_{\mathcal{S}_k}(x^k) + \nabla_x f(x^k, y^{k+1}) - \nabla_x f_{\mathcal{S}_k}(x^k, y^{k+1}) \rangle \notag \\
& + \frac{\bar{L}_{\phi,x}}{2} \|x^{k+1} - x^k\|^2, \tag{5.8a} \label{eq:ex_Gamma^x_inter} \\
&\phi^y_{x^k, \mathcal{S}_k}(y^{k}) - \phi^y_{x^k, \mathcal{S}_k}(y^{k+1}) + \left[ h(y^{k+1}) - h(y^k) \right] + \left[ f(x^k, y^k) - f(x^k, y^{k+1}) \right] \notag \\
\leq{} & \langle y^{k+1} - y^k, \nabla h(y^k) - \nabla h_{\mathcal{S}_k}(y^k) - \nabla_y f(x^k, y^k) + \nabla_y f_{\mathcal{S}_k}(x^k, y^k) \rangle \notag \\
& + \frac{\bar{L}_{\phi,y}}{2} \|y^{k+1} - y^k\|^2. \tag{5.8b} \label{eq:ex_Gamma^y_inter}
\end{align}
\end{subequations}

By applying \hyperref[lem:young_inq_gene]{Lemma \ref{lem:young_inq_gene}} here, we can derive the following results
\[
\begin{cases*}
    \tag{5.9}
    \label{eq:lemma6.1_xy}
    \|\hat{x}^{k+1}-x\|^2 \ge (1-\rho_1^x)\|x^{k+1}-x\|^2+(1-\frac{1}{\rho_1^x})\|\hat{x}^{k+1}-x^{k+1}\|^2,\\
    \|\hat{y}^{k+1}-y\|^2 \ge (1-\rho_1^y)\|y^{k+1}-y\|^2+(1-\frac{1}{\rho_1^y})\|\hat{y}^{k+1}-y^{k+1}\|^2,\\
\end{cases*}
\]
for $\rho_1^x,\rho_1^y \in (0,1) $.By combining \(\eqref{eq:ge Gamma^x2}\), \(\eqref{eq:ex Gamma^x}\), \(\eqref{eq:ex_Gamma^x_inter}\), and \(\eqref{eq:lemma6.1_xy}\),
 we can obtain the following result
\begin{align*}
&\frac{1}{2}\left(\mu_x + \frac{1}{\alpha}\right) \left[(1 - \rho_1^x) \|x^{k+1} - x\|^2 + (1 - \frac{1}{\rho_1^x}) \|\hat{x}^{k+1} - x^{k+1}\|^2\right] - \frac{1}{2\alpha} \|x - x^k\|^2, \\
\leq{} & L(x, y^{k+1}, \lambda^k) - L(x^{k+1}, y^{k+1}, \lambda^k) + (g_{\mathcal{S}_k}(x) - g(x)) + (f_{\mathcal{S}_k}(x, y^{k+1}) - f(x, y^{k+1})) \\
& + (g(x^k) - g_{\mathcal{S}_k}(x^k)) + (f(x^k, y^{k+1}) - f_{\mathcal{S}_k}(x^k, y^{k+1})) + \langle v_y^{x,k}, x - x^{k} \rangle \\
& + \langle x^{k+1} - x^k, \nabla g(x^k) - \nabla g_{\mathcal{S}_k}(x^k) + \nabla_x f(x^k, y^{k+1}) - \nabla_x f_{\mathcal{S}_k}(x^k, y^{k+1}) - v_y^{x,k} \rangle \\
& + \frac{\bar{L}_{\phi ,x}}{2} \|x^{k+1} - x^k\|^2 + \frac{\alpha}{2} \|H_y^{x,k+1}\|^2 - \frac{\mu_x}{2} \|\hat{x}^{k+1} - x^{k+1}\|^2- \frac{1}{2\alpha} \|x^k - x^{k+1}\|^2.
\end{align*}
Similarly, by combining \(\eqref{eq:ge Gamma^y2}\), \(\eqref{eq:ex Gamma^y}\), \(\eqref{eq:ex_Gamma^y_inter}\), and \(\eqref{eq:lemma6.1_xy}\), we can obtain the following result
\begin{align*}
&\frac{1}{2}\left(\mu_y + \frac{1}{\alpha}\right) \left[(1 - \rho_1^y) \|y^{k+1} - y\|^2 + (1 - \frac{1}{\rho_1^x}) \|\hat{y}^{k+1} - y^{k+1}\|^2\right] - \frac{1}{2\alpha} \|y - y^k\|^2 \\
\leq{}&L(x^k, y^{k+1}, \lambda^k) - L(x^{k}, y, \lambda^k) + \left( h_{\mathcal{S}_k}(y) - h(y) \right) + \left( f(x^k, y) - f_{\mathcal{S}_k}(x^k, y) \right) \\
&\quad  + \left( f_{\mathcal{S}_k}(x^{k}, y^{k}) - f(x^k, y^{k}) \right) +\left( h(y^k) - h_{\mathcal{S}_k}(y^k) \right)  + \langle v^{y,k}_x, y^{k} - y \rangle\\
& \quad + \frac{\alpha}{2} \|H_x^{y,k+1}\|^2 - \frac{\mu_y}{2} \|\hat{y}^{k+1} - y^{k+1}\|^2 - \frac{1}{2\alpha} \|y^k - y^{k+1}\|^2+ \frac{\bar{L}_{\phi,y}}{2} \|y^{k+1} - y^k\|^2 \\
&\quad+ \langle y^{k+1} - y^k, \nabla h(y^k) - \nabla h_{\mathcal{S}_k}(y^k) - \nabla_y f(x^k, y^k) + \nabla_y f_{\mathcal{S}_k}(x^k, y^k) +v^{y,k} \rangle.\notag 
\end{align*}

Next, we perform some manipulations on the above inequalities. Setting 
\(e^{x,k}_y(x):=g_{\mathcal{S}_k}(x)-g(x)+g(x^k)-g_{\mathcal{S}_k}(x^k)+f_{\mathcal{S}_k}(x,y^{k+1})-f(x,y^{k+1})+f(x^k,y^{k+1})
-f_{\mathcal{S}_k}(x^k,y^{k+1})+\langle v^{x,k}_y,x-x^k\rangle,\)
\(e^{y,k}_x(y):=h_{\mathcal{S}_k}(y)-h(y)+h(y^k)-h_{\mathcal{S}_k}(y^k)+f_{\mathcal{S}_k}(x^k,y)-f(x^k,y)+f_{\mathcal{S}_k}(x^k,y^{k})
-f(x^k,y^{k})+\langle v^{y,k}_x,y^k-y\rangle,\)
~then, applying Young's inequality once more, we have
\begin{align*}
    &\langle x^{k+1} - x^k, \nabla g(x^k) - \nabla g_{\mathcal{S}_k}(x^k) + \nabla_x f(x^k, y^{k+1}) - \nabla_x f_{\mathcal{S}_k}(x^k, y^{k+1}) - v_y^{x,k} \rangle\\
    \leq & \frac{1}{2\rho_2^x}\|\nabla g(x^k) - \nabla g_{\mathcal{S}_k}(x^k) + \nabla_x f(x^k, y^{k+1}) - \nabla_x f_{\mathcal{S}_k}(x^k, y^{k+1}) - v_y^{x,k}\|^2\\
    &+ \frac{\rho_2^x}{2}\|x^{k+1}-x^k\|^2, \text{ for all }~x\in \dom(\varphi),\rho_2^x\in(0,1).
\end{align*}
Similarly, we have
\(\langle y^{k+1} - y^k, \nabla h(y^k) - \nabla h_{\mathcal{S}_k}(y^k) -\nabla_y f(x^k, y^{k}) + \nabla_y f_{\mathcal{S}_k}(x^k, y^{k}) +v^{y,k}_x \rangle
    \leq  \frac{1}{2\rho_2^y}\|\nabla h(y^k) - \nabla h_{\mathcal{S}_k}(y^k) -\nabla_y f(x^k, y^{k}) + \nabla_y f_{\mathcal{S}_k}(x^k, y^{k}) +v^{y,k}_x\|^2
    + \frac{\rho_2^y}{2}\|y^{k+1}-y^k\|^2, \text{ for all }~y\in \dom(\psi),\ \rho_2^y\in(0,1).
\)
Hence, we obtian
\begin{align*}
\tag{C.2a}
\label{eq:important_inequality_x}
&(1+\alpha \mu_x)(1-\rho_1^x)\|x^{k+1}-x\|^2-\|x-x^k\|^2\\
\leq &2\alpha [L(x, y^{k+1}, \lambda^k) - L(x^{k+1}, y^{k+1}, \lambda^k)]+2\alpha e^{x,k}_y(x)+\alpha^2 \|H_y^{x,k+1}\|^2\\
&+ (1+\alpha \mu_x)((\rho_1^x)^{-1}-1)\|\hat{x}^{k+1}-x^{k+1}\|^2-(1-\alpha(\rho_2^x+\bar{L}_{\phi,x}))\|x^{k}-x^{k+1}\|^2 \\
&+\frac{\alpha}{\rho_2^x}\|\nabla g(x^k) - \nabla g_{\mathcal{S}_k}(x^k) + \nabla_x f(x^k, y^{k+1}) - \nabla_x f_{\mathcal{S}_k}(x^k, y^{k+1}) - v_y^{x,k}\|^2,
\end{align*}
where we use \(-\alpha \mu_x-(1+\alpha \mu_x)(1-(\rho_1^x)^{-1}) \leq(1+\alpha \mu_x)((\rho_1^x)^{-1}-1).\)
Similarly, it follows from \(-\alpha \mu_y-(1+\alpha \mu_y)(1-(\rho_1^y)^{-1}) \leq (1+\alpha \mu_y)((\rho_1^y)^{-1}-1)\) that
\begin{align*}
\tag{C.2b}
\label{eq:important_inequality_y}
&(1+\alpha \mu_y)(1-\rho_1^y)\|y^{k+1}-y\|^2-\|y-y^k\|^2\\
\leq &2\alpha[L(x^k, y^{k+1}, \lambda^k) - L(x^{k}, y, \lambda^k)] +2\alpha e^{y,k}_x(y)+ \alpha^2 \|H_x^{y,k+1}\|^2\\
&-(1-\alpha(\bar{L}_{\phi,y}+\rho_2^y)) \|y^k - y^{k+1}\|^2+(1+\alpha \mu_y)((\rho_1^y)^{-1}-1)\|\hat{y}^{k+1}-y^{k+1}\|^2 \\
&+ \frac{\alpha}{\rho_2^y}\|\nabla h(y^k) - \nabla h_{\mathcal{S}_k}(y^k) -\nabla_y f(x^k, y^{k}) + \nabla_y f_{\mathcal{S}_k}(x^k, y^{k}) +v^{y,k}_x\|^2.
\end{align*}
Moreover, by Lipschitz smoothness,~\eqref{eq:Hxky-definition} and \hyperref[prop:control_inexact]{proposition \ref{prop:control_inexact}}, it holds that\begin{small}
\begin{align*}
\quad\|H^{x,k+1}_y\|
 &= \|(\mathcal{A}_{\mathcal{S}_k}^y)^\top (\xi_y^{x,k+1} - \hat{\xi}_y^{x,k+1}) +\bigl( \nabla g_{\mathcal{S}_k}(\hat{x}^{k+1}) - \nabla g_{\mathcal{S}_k}(x^{k+1}) \bigr) \\
&\quad +\bigl( \nabla_x f_{\mathcal{S}_k}(\hat{x}^{k+1}, \hat{y}^{k+1}) - \nabla_x f_{\mathcal{S}_k}(x^{k+1}, y^{k+1}) \bigr) \|\\
&\leq \frac{\|\mathcal{A}_{\mathcal{S}_k}^x\|\varepsilon_{\text{sub}}}{\mu_x^*} +L_g\frac{\alpha}{\mu_x^* b} \varepsilon_{\text{sub}} +\frac{\alpha}{b} L_f \varepsilon_{\text{sub}} \left( \frac{1}{\mu_x^*} + \frac{1}{\mu_y^*} \right),
\end{align*}
which implies that
\begin{equation*}
    \label{leq:H_x}
    \tag{H.2a}
\|H^{x,k+1}_y\|\leq \left[\frac{\|\mathcal{A}_{\mathcal{S}_k}^x\|}{\mu^*_x}+
L_g\frac{\alpha}{\mu_x^* b}+\frac{\alpha}{b} L_f(\frac{1}{\mu_x^*}+\frac{1}{\mu_y^*})\right]
\varepsilon_{\text{sub}}
\end{equation*}
\begin{equation}
    \label{leq:H_y}
    \tag{H.2b}
\text{and\ }\|H^{y,k+1}_x\|\leq \left[\frac{\|\mathcal{A}_{\mathcal{S}_k}^y\|}{\mu^*_y}+L_h\frac{\alpha}{\mu_y^* b}+
\frac{\alpha}{b} L_f(\frac{1}{\mu_x^*}+\frac{1}{\mu_y^*})\right]\varepsilon_{\text{sub}}.
\end{equation}\end{small}
By the strong convexity and strong concavity properties, we have
$$
L(x^*, y^{k+1}, \lambda^k) - L(x^{k+1}, y^{k+1}, \lambda^k) \leq -\frac{\mu_x}{2} \| x^{k+1} - x^* \|^2$$
$$L(x^k, y^{k+1}, \lambda^k) - L(x^k, y^*, \lambda^k) \leq -\frac{\mu_y}{2} \| y^{k+1} - y^* \|^2.
$$
By \hyperref[thm:variance_reduce_1]{Theorem~\ref{thm:variance_reduce_1}} with $x=x^k,\tilde{x}=\tilde{x}^s,y=y^{k+1},\tilde{y}=\tilde{y}^s$, we have 
$$
\mathbb{E} \left\| \nabla g(x^k) - \nabla g_{s_k}(x^k) + \nabla_x f(x^k, y^{k+1}) - \nabla_x f_{s_k}(x^k, y^{k+1}) - v_y^{x,k} \right\|^2 
\leq \frac{\bar{L}_{\phi, x}^2}{b} [\| x^k - \tilde{x}^s \|^2+\|y^{k+1}-\tilde{y}^s\|^2],
$$
and with $x=x^k,\tilde{x}=\tilde{x}^s,y=y^{k},\tilde{y}=\tilde{y}^s$,  we have
$$
\mathbb{E} \left\| \nabla h(y^k) - \nabla h_{\mathcal{S}_k}(y^k) -\nabla_y f(x^k, y^{k}) + \nabla_y f_{\mathcal{S}_k}(x^k, y^{k}) +v^{y,k}_x \right\|^2 
\leq \frac{\bar{L}_{\phi, y}^2}{b} [\| x^k - \tilde{x}^s \|^2+\|y^{k}-\tilde{y}^s\|^2].
$$
Moreover,  
\(
\mathbb{E}[e_k^x(x^*)] = 0 \text{\ and\ }\mathbb{E}[e_k^y(y^*)] = 0.
\)  
In addition, by definition and due to the Lipschitz continuity of $F^x_{y,nat},F^y_{x,nat}$. We
obtain
\(\varepsilon_k\leq \delta_s\|F^y_{x,nat}(\tilde{y}^s)\|\leq  (2+L_f+L_h)\delta_s\|\tilde{y}^s-y^*\| ,
\varepsilon_k\leq \delta_s\|F^x_{y,nat}(\tilde{x}^s)\|\leq  (2+L_f+L_g)\delta_s\|\tilde{x}^s-x^*\| .\)
We now choose $x = x^*$, combining our previous results, and taking expectation, it follows
\begin{small} 
\begin{align*}    
    &(1+\alpha \mu_x)(1-\rho_1^x)\E\|x^{k+1}-x^*\|^2-\E\|x^*-x^k\|^2 \\
    \leq{}& -\alpha \mu_x\E \|x^{k+1}-x^*\|^2 + c^x(\alpha)\delta_s^2\|\tilde{x}^s-x^*\|^2 \\
    & +\frac{\alpha\bar{L}^2_{\phi,x}}{\rho^x_2b}\left[\| x^k - \tilde{x}^s \|^2+\|y^{k+1}-\tilde{y}^s\|^2\right]
    -(1-\alpha(\rho_2^x+\bar{L}_{\phi,x}))\|x^{k}-x^{k+1}\|^2.
\end{align*}
\end{small}%
For the $y$ update, we have
\begin{small}
\begin{align*}    
    &(1+\alpha \mu_y)(1-\rho_1^y)\E\|y^{k+1}-y^*\|^2-\E\|y^*-y^k\|^2 \\
    \leq{}& -\alpha \mu_y\E \|y^{k+1}-y^*\|^2 + c^y(\alpha)\delta_s^2\|\tilde{y}^s-y^*\|^2 \\
    & +\frac{\alpha\bar{L}^2_{\phi,y}}{\rho^y_2b}\left[\| x^k - \tilde{x}^s \|^2+\|y^{k}-\tilde{y}^s\|^2\right]
    -(1-\alpha(\rho_2^y+\bar{L}_{\phi,y}))\|y^{k}-y^{k+1}\|^2.
\end{align*}where the coefficient terms $c^x(\alpha)$ and $c^y(\alpha)$ as
\( c^x(\alpha) \coloneqq \alpha^2 \left[\frac{\|\mathcal{A}_{\mathcal{S}_k}^x\|}{\mu^*_x}+ \frac{\alpha L_g}{\mu_x^* b}+\frac{\alpha L_f}{b} \left(\frac{1}{\mu_x^*}+\frac{1}{\mu_y^*}\right)\right]^2 (2+L_f+L_g)^2, 
c^y(\alpha) \coloneqq \alpha^2 \left[\frac{\|\mathcal{A}_{\mathcal{S}_k}^y\|}{\mu^*_y}+ \frac{\alpha L_h}{\mu_y^* b}+\frac{\alpha L_f}{b} \left(\frac{1}{\mu_x^*}+\frac{1}{\mu_y^*}\right)\right]^2 (2+L_f+L_h)^2.
\)\end{small}
Then, we have
\begin{small}
\begin{align*}
    &(1+2\alpha \mu_x-\rho_1^x-\alpha \rho^x_1\mu_x)\E\|x^{k+1}-x^*\|^2\\
\leq&\E\|x^*-x^k\|^2+c^x(\alpha)\delta_s^2\|\tilde{x}^s-x^*\|^2-(1-\alpha(\rho_2^x+\bar{L}_{\phi,x}))\|x^{k}-x^{k+1}\|^2+\frac{\alpha\bar{L}^2_{\phi,x}}{\rho^x_2b}[\| x^k - \tilde{x}^s \|^2+\|y^{k+1}-\tilde{y}^s\|^2] \tag{5.10a}
    \label{x:important_inequality}\\
 &(1+2\alpha \mu_y-\rho_1^y-\alpha \rho^y_1\mu_y)\E\|y^{k+1}-y^*\|^2\\
\leq&\E\|y^*-y^k\|^2+c^y(\alpha)\delta_s^2\|\tilde{y}^s-y^*\|^2-(1-\alpha(\rho_2^y+\bar{L}_{\phi,y}))\|y^{k}-y^{k+1}\|^2+\frac{\alpha\bar{L}^2_{\phi,y}}{\rho^y_2b}[\| x^k - \tilde{x}^s \|^2+\|y^{k}-\tilde{y}^s\|^2]\tag{5.10b}
    \label{y:important_inequality}
\end{align*}\end{small}
For $ k<m$, $\tilde{x}^s=x^0$, we have
$$
\E\|x^k-\tilde{x}^s\|^2=\E\|\sum_{i=0}^{k-1}(x^{i+1}-x^i)\|^2
\leq k\sum_{i=0}^{k-1}\E\|x^{i+1}-x^{i}\|^2\leq k\sum_{i=0}^{m-2}\E\|x^{i+1}-x^i\|^2.$$
Summing this estimate for $k = 0,\cdots,m-1$ gives that
$$\sum_{k=0}^{m-1}\E\|x^k-\tilde{x}^s\|^2\leq \sum_{k=0}^{m-1} k \sum_{i=0}^{m-2}\E\|x^{i+1}-x^i\|^2\leq \frac{m(m-1)}{2}\sum_{i=0}^{m-2}\E\|x^{i+1}-x^i\|^2.$$
Similarly, we have
\(\sum_{k=0}^{m-1}\E\|y^k-\tilde{y}^s\|^2\leq  \frac{m(m-1)}{2}\sum_{i=0}^{m-2}\E\|y^{i+1}-y^i\|^2\).
We now suppose that $\rho^x_1$ is chosen such that $2\alpha \mu_x > \rho_1^x(1+\alpha \mu_x)$,
 $\rho^y_1$ is chosen such that $2\alpha \mu_y > \rho_1^y(1+\alpha \mu_y)$.
 We set $\rho_1=\max\{\rho_1^x,\rho_1^y\},$  we set
 \(\rho_1 <\min\left\{\frac{2\alpha \mu_x}{1+\alpha \mu_x},\frac{2\alpha \mu_y}{1+\alpha \mu_y} \right\}\).
  Then, summing the \eqref{x:important_inequality} estimate for $k = 0,\cdots,m-1$ this implies
\begin{align*}
    &(1+2\alpha \mu_x-\rho_1^x-\alpha \rho^x_1\mu_x)\E\|\tilde{x}^{s+1}-x^*\|^2\\
\leq&\E\|x^*-\tilde{x}^s\|^2+c^x(\alpha)\delta_s^2\E\|\tilde{x}^s-x^*\|^2-(1-\alpha(\rho_2^x+\bar{L}_{\phi,x}))\sum_{k=0}^{m-1}\|x^{k}-x^{k+1}\|^2\\
&+\frac{\alpha\bar{L}^2_{\phi,x}}{\rho^x_2b}\sum_{k=0}^{m-1}[\| x^k - \tilde{x}^s \|^2+\|y^{k+1}-\tilde{y}^s\|^2] \\
\leq&(1+c^x(\alpha)\delta_s^2)\E\|\tilde{x}^s-x^*\|^2-(1-\alpha(\rho_2^x+\bar{L}_{\phi,x}+\frac{m(m-1)\bar{L}^2_{\phi,x}}{2\rho^x_2b}))\sum_{k=0}^{m-1}\|x^{k}-x^{k+1}\|^2\\
&+\frac{m(m-1)\alpha\bar{L}^2_{\phi,x}}{2\rho^y_2b}\sum_{k=0}^{m-1}\|y^{k+1}-y^k\|^2
\end{align*}
and
\begin{align*}
&(1+2\alpha \mu_y-\rho_1^y-\alpha \rho^y_1\mu_y)\E\|\tilde{y}^{s+1}-y^*\|^2\\
\leq&(1+c^y(\alpha)\delta_s^2)\E\|\tilde{y}^s-x^*\|^2-(1-\alpha(\rho_2^y+\bar{L}_{\phi,y}+\frac{m(m-1)\bar{L}^2_{\phi,y}}{2\rho^y_2b}))\sum_{k=0}^{m-1}\|y^{k}-y^{k+1}\|^2\\
&+\frac{m(m-1)\alpha\bar{L}^2_{\phi,y}}{2\rho^y_2b}\sum_{k=0}^{m-1}\|x^{k+1}-x^k\|^2.
\end{align*}
Choosing \(\rho_2^x=\frac{\bar{L}_{\phi,x}\sqrt{m(m-1)}}{\sqrt{2b}}, \rho_2^y=\frac{\bar{L}_{\phi,y}\sqrt{m(m-1)}}{\sqrt{2b}}\),and set $\rho_1=\delta_s$.
We take the minimum value between \((1 + 2\alpha \mu_x - \rho_1^x - \alpha \rho_1^x \mu_x)\) and 
\((1 + 2\alpha \mu_y - \rho_1^y - \alpha \rho_1^y \mu_y)\). We set $\mu_{min}:=\min\{\mu_x,\mu_y\},\mu_{max}:=\max\{\mu_x,\mu_y\}$,
 obviously , we have
\begin{equation*}
    (1 + 2\alpha \mu_{min} - \rho_1 - \alpha \rho_1 \mu_{max}) \leq \min\big\{(1 + 2\alpha \mu_x - \rho_1^x - \alpha \rho_1^x \mu_x),\ (1 + 2\alpha \mu_y - \rho_1^y - \alpha \rho_1^y \mu_y)\big\}.
\end{equation*}
Then we take the maximum value between \(c_x(\alpha)\) and \(c_y(\alpha)\). Similarly, we set
\(
    c^z(\alpha) := \max\big\{c^x(\alpha),\ c^y(\alpha)\big\}.
\)
Then,we have
\begin{align*}
    & (1 + 2\alpha \mu_{min} - \rho_1 - \alpha \rho_1 \mu_{max}) \E\|(\tilde{x}^{s+1},\tilde{y}^{s+1})-(x^*,y^*)\|^2\\
 \leq&(1+2\alpha \mu_x-\rho_1^x-\alpha \rho^x_1\mu_x)\E\|\tilde{x}^{s+1}-x^*\|^2+(1+2\alpha \mu_y-\rho_1^y-\alpha \rho^y_1\mu_y)\E\|\tilde{y}^{s+1}-y^*\|^2\\
\leq& (1+c^x(\alpha)\delta_s^2)\E\|\tilde{x}^s-x^*\|^2+(1+c^y(\alpha)\delta_s^2)\E\|\tilde{y}^s-y^*\|^2\\
&\quad+\frac{m(m-1)\alpha\bar{L}^2_{\phi,y}}{2\rho^y_2b}\sum_{k=0}^{m-1}\|x^{k+1}-x^k\|^2+\frac{m(m-1)\alpha\bar{L}^2_{\phi,x}}{2\rho^y_2b}\sum_{k=0}^{m-1}\|y^{k+1}-y^k\|^2\\
&\quad-(1-\alpha(\rho_2^y+\bar{L}_{\phi,y}+\frac{m(m-1)\bar{L}^2_{\phi,y}}{2\rho^y_2b}))\sum_{k=0}^{m-1}\|y^{k}-y^{k+1}\|^2\\
&\quad-(1-\alpha(\rho_2^x+\bar{L}_{\phi,x}+\frac{m(m-1)\bar{L}^2_{\phi,x}}{2\rho^x_2b}))\sum_{k=0}^{m-1}\|x^{k}-x^{k+1}\|^2\\
\leq&(1+c^z(\alpha)\delta_s^2) \E\|(\tilde{x}^{s},\tilde{y}^{s})-(x^*,y^*)\|^2,\\
\end{align*}
where we set the following
\(
\alpha < \min \left\{ 
\left( 
\bar{L}_{\phi,y} 
\right. \right.  + \frac{\sqrt{2m(m-1)}\,\bar{L}_{\phi,y}}{\sqrt{b}} + \frac{\sqrt{m(m-1)}\,\bar{L}_{\phi,x}}{\sqrt{2b}} 
\left. \vphantom{\frac{\sqrt{2m(m-1)}}{\sqrt{b}}} \right)^{-1},\)

\( \left( 
\bar{L}_{\phi,x} 
+ \frac{\sqrt{2m(m-1)}\,\bar{L}_{\phi,x}}{\sqrt{b}} + \frac{\sqrt{m(m-1)}\,\bar{L}_{\phi,y}}{\sqrt{2b}} 
\left. \vphantom{\frac{\sqrt{2m(m-1)}}{\sqrt{b}}} \right)^{-1} 
\right\}.
\)
Now, we make $\rho_1=\delta_s$. Then, we have
\begin{align*}
     & \E\|(\tilde{x}^{s+1},\tilde{y}^{s+1})-(x^*,y^*)\|^2\\
\leq&\left( \frac{1+c^z(\alpha)\delta_s^2}{1 + 2\alpha \mu_{min} - \rho_1 - \alpha \rho_1 \mu_{max}}\right)\E\|(\tilde{x}^{s},\tilde{y}^{s})-(x^*,y^*)\|^2\\
=&\left( 1-\frac{ 2\alpha\mu_{min} }{1 + 2\alpha \mu_{min} - \rho_1 - \alpha \rho_1 \mu_{max}}+\frac{ \rho_1+\alpha \rho_1 \mu_{max}+c^z(\alpha)\delta_s^2}{1 + 2\alpha \mu_{min} - \rho_1 - \alpha \rho_1 \mu_{max}}\right)\E\|(\tilde{x}^{s},\tilde{y}^{s})-(x^*,y^*)\|^2\\
\leq&\left( 1-\frac{ 2\alpha\mu_{min} }{1 + 2\alpha \mu_{min}}+O(\delta_s)\right)\E\|(\tilde{x}^{s},\tilde{y}^{s})-(x^*,y^*)\|^2, \quad \text{as}\quad s \to \infty.
\end{align*}
\noindent The last inequality follows from the following assumption
\(\rho_1 <\min \left\{ \frac{1+2\alpha \mu_{min}}{1+\alpha \mu_{max}} ,\frac{2\alpha \mu_x}{1+\alpha \mu_x},\frac{2\alpha \mu_y}{1+\alpha \mu_y}      \right\}\)
 This proves q-linear convergence of \( \{(x^s,y^s)\} \) to \( (x^*,y^*) \) in expectation.
Now,we have known
\begin{align*}
    &\E\|(\tilde{x}^{s+1},\tilde{y}^{s+1})-(x^*,y^*)\|^2\\
    \leq&\left( 1-\frac{ 2\alpha\mu_{min} }{1 + 2\alpha \mu_{min}}+O(\delta_s)\right)\E\|(\tilde{x}^{s},\tilde{y}^{s})-(x^*,y^*)\|^2, \quad \text{as}\quad s \to \infty.
\end{align*}

Next, we prove the R-linear convergence of $\lambda$. By Assumption \ref{tolerance_bound} (with $L_{\lambda} = \max(\norm{A}, \norm{B})$) and the update rule (where $\tilde{\lambda}^s = \lambda^{s-1,m}$), the dual error is bounded by the primal error
\begin{equation}\tag{5.11}
\label{eq:lambda_bound_sq}
\norm{\tilde{\lambda}^s - \lambda^{*}}^2 \leq \kappa^2 L_{\lambda}^2 \norm{(\tilde{x}^s, \tilde{y}^s) - (x^*, y^*)}^2.
\end{equation}
Recalling the linear convergence of the primal variables established previously, we have
$$\Expect \norm{(\tilde{x}^{s}, \tilde{y}^{s}) - (x^*, y^*)}^2 \leq C_0 \rho^s$$ for some $C_0 > 0$ and $\rho \in (0,1)$. Taking the expectation of \eqref{eq:lambda_bound_sq} and substituting this bound yields:
\begin{equation}\tag{5.12}
\label{eq:lambda_geometric}
\Expect \norm{\tilde{\lambda}^s - \lambda^{*}}^2 \leq \kappa^2 L_{\lambda}^2 \Expect \norm{(\tilde{x}^s, \tilde{y}^s) - (x^*, y^*)}^2 \leq \underbrace{(\kappa L_{\lambda})^2 C_0}_{:=C_1} \rho^s.
\end{equation}
Since $\rho < 1$, \eqref{eq:lambda_geometric} demonstrates the R-linear convergence of $\{\tilde{\lambda}^s\}$ to $\lambda^*$ in expectation.
\end{proof}         

\begin{assump}\label{full_row}
Suppose that \( A \) and \( B \) satisfy that \( [A\ B] \) is of full row rank.
\end{assump}

It follows from \hyperref[full_row]{Assumption \ref{full_row}} that \( AA^T + BB^T \) is positively definite. 
\hyperref[full_row]{Assumption \ref{full_row}} can be seen as an extension of the linear independent constraint qualification (LICQ) in nonlinear programming, 
which is a general assumption to ensure existence and uniqueness of Lagrange multipliers (e.g., \cite{bonnans2013perturbation,kyparisis1985uniqueness, wachsmuth2013licq}). 
We give the expression of the projection of a point in \( \R^n \times \R^m \) onto \( C \) in the following lemma.

\begin{Lemma}
Let \hyperref[full_row]{Assumption \ref{full_row}} be satisfied. Then for any \( (\tilde{x}, \tilde{y}) \), which is output by Algorithm \ref{alg:SNmMSPP}, its projection on \( C \), denoted by \( \Pi_C(\tilde{x}, \tilde{y}) \), is given by
\[
\Pi_C(\tilde{x}, \tilde{y}) = (\tilde{x}, \tilde{y}) - \bigl(A^T \tilde{\zeta}, B^T \tilde{\zeta}\bigr)
\]
with \( \tilde{\zeta} = (AA^T + BB^T)^{-1}(A\tilde{x} + B\tilde{y} + c) \).
\end{Lemma}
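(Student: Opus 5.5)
The set $C$ is understood to be the affine set $C=\{(x,y)\in\R^n\times\R^m : Ax+By+c=0\}$, and the claim is the standard formula for the Euclidean projection onto an affine subspace. I will obtain it from the KKT conditions of the projection problem. Write $M:=[A\ B]\in\R^{q\times(n+m)}$ and $w=(x,y)$, so that $C=\{w: Mw+c=0\}$ and $MM^T=AA^T+BB^T$. By \hyperref[full_row]{Assumption \ref{full_row}}, $M$ has full row rank. Hence $MM^T$ is positive definite and invertible, which makes $\tilde\zeta$ well defined. Full row rank also guarantees $C\neq\emptyset$, since $w=-M^T(MM^T)^{-1}c$ is feasible.

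The projection $\Pi_C(\tilde w)$ is the unique minimizer of the strongly convex problem $\min_w \frac12\|w-\tilde w\|^2$ subject to $Mw+c=0$. Because the constraints are affine, the KKT conditions are necessary and sufficient, with no further qualification needed. They read $w-\tilde w+M^T\zeta=0$ and $Mw+c=0$ for some $\zeta\in\R^q$. The first condition gives $w=\tilde w-M^T\zeta$. Substituting into the second yields $M\tilde w-MM^T\zeta+c=0$, so $\zeta=(MM^T)^{-1}(M\tilde w+c)=(AA^T+BB^T)^{-1}(A\tilde x+B\tilde y+c)=\tilde\zeta$. Then $w=\tilde w-M^T\tilde\zeta=(\tilde x-A^T\tilde\zeta,\ \tilde y-B^T\tilde\zeta)$, which is the claimed formula.

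As an alternative check that avoids KKT, set $\bar w:=\tilde w-M^T\tilde\zeta$. Direct substitution shows $M\bar w+c=0$, so $\bar w\in C$. For any $w\in C$ we have $M(w-\bar w)=0$, and therefore $\langle \tilde w-\bar w, w-\bar w\rangle=\langle \tilde\zeta, M(w-\bar w)\rangle=0$. This is the variational characterization of the projection onto a convex set, and it identifies $\bar w=\Pi_C(\tilde w)$.

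There is no real obstacle. The only points needing care are the invertibility of $AA^T+BB^T$, which is exactly what Assumption \ref{full_row} provides, and the fact that the origin of $(\tilde x,\tilde y)$ as an output of Algorithm \ref{alg:SNmMSPP} plays no role: the formula holds for an arbitrary point.
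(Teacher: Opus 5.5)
Your proof is correct and complete. The paper gives no proof of this lemma at all: it states the formula bare, apparently carried over from the cited PGmsAD work on linearly coupled constraints, and it never formally defines $C$. Your reading $C=\{(x,y): Ax+By+c=0\}$ is the one consistent with the formula and with Remark~\ref{rem:constraint_satisfaction}, so your argument supplies what the paper omits rather than offering an alternative to it.

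Either of your two arguments suffices on its own. The KKT derivation shows where $\tilde\zeta$ comes from: it is the multiplier of the projection problem. The direct check needs only the variational characterization of the projection: you verify that $\bar w\in C$ and that $\tilde w-\bar w=M^T\tilde\zeta$ is orthogonal to $\ker M$. You also correctly verify invertibility of $AA^T+BB^T$ and nonemptiness of $C$, both of which the paper only asserts. Finally, you are right that it is irrelevant that $(\tilde x,\tilde y)$ is an output of Algorithm~\ref{alg:SNmMSPP}; the formula holds for every point of $\R^n\times\R^m$.
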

\begin{rem}
The projection mechanism ensures iterates remain near-feasible despite stochastic noise, enabling efficient recovery from temporary constraint violations without compromising convergence.
\end{rem}

\section{Numerical Experiments}\label{sec:num_experiments}

\subsection{Adversarial attacks in network flow problems}

This section evaluates attack strategies (SNmMSPP, MGD, Random, Max capacity, Greedy) on random networks generated via the Erdos-Renyi model (parameter $p$) with $n$ nodes. Edge capacities $p_{ij}$ and base costs $w_{ij}$ are drawn uniformly from $[1,2]$. To model uncertainty in real-world systems \cite{fu2019network,salmeron2004analysis}, we generate $M=2000$ cost samples $w_{ij}^m \sim \mathcal{N}(w_{ij}, \sigma^2)$. The sink demand $r_t$ is set to 50\% of the maximum flow.

The adversary aims to maximize the expected cost by solving the following regularized SAA problem (with $\eta_y=10^{-5}$)
\begin{equation}
\label{eq:experiment_problem}
\begin{aligned}
\max_{\begin{subarray}{c}\mathbf{0} \leq \mathbf{y} \leq \mathbf{p} \\ 
\sum_{(i,j) \in E} y_{ij} = b\end{subarray}} 
&\min_{\begin{subarray}{c}\mathbf{0} \leq \mathbf{x} \leq \mathbf{p} \\ 
\sum_{(i,i) \in E} x_{ii} = r_{t}\end{subarray}} 
\frac{1}{M}\sum_{m=1}^{M}\sum_{(i,j) \in E} w_{ij}^m \cdot (x_{ij} + y_{ij}) \cdot x_{ij} - \frac{\eta_y}{2} \|\mathbf{y}\|^2 \\
& \text{s.t. } \mathbf{x} + \mathbf{y} \leq \mathbf{p} \\
& \quad \sum_{(i,j) \in E} x_{ij} - \sum_{(j,k) \in E} x_{jk} = 0, \quad \forall j \in V \setminus \{s,t\}.
\end{aligned}
\tag{6.2}
\end{equation}
Here, the unit cost is linear in total flow, and the objective maximizes the network owner's expected cost. 

For numerical implementation, we convert the constraints (flow conservation, capacity $\mathbf{x} + \mathbf{y} + \mathbf{z} = \mathbf{p}$, and budget) into a linear system $Ax + By + Cz = b_{\text{eq}}$ by introducing slack variables $z$. Based on these matrices, we compute the projection matrix $M_{\text{proj}} = (AA^T + BB^T + CC^T)^{-1}$ to facilitate the projection operations.

We compare two solution approaches with optimized parameters
\begin{itemize}    
\item \textbf{Deterministic Baselines:} We replace stochastic costs with averages $\bar{w}_{ij} = \frac{1}{M}\sum w_{ij}^m$ and apply Multiplier Gradient Descent (MGD) (with $T=100$ outer iterations, $K=5$ inner steps, step sizes 0.5) and three heuristic strategies
    \textit{Random attack}: Randomly generates $\mathbf{y}$ satisfying the budget.
       \textit{Max capacity attack}: Prioritizes attacking edges with the largest capacities.
        \textit{Greedy attack}: Prioritizes edges with the lowest cost coefficients \cite{tsaknakis2023minimax}.
   
\item \textbf{Stochastic Algorithm (SNmMSPP):} This method handles sampling uncertainty via mini-batches. We set outer iterations $S=200$, inner iterations $M=5$, step size $\alpha=0.002$, batch size $b_0=10$, and $\varepsilon_{\text{sub}}=10^{-10}$. For the Semismooth Newton step (\hyperref[alg:ssn_generic]{Algorithm \ref{alg:ssn_generic}}), we use $\hat{\gamma} = 0.4$, $\eta = 10^{-7}$, $\rho = 0.99$, $\tau = 0.1$, $\tau_1 = 0.01$, and $\tau_2 = 10^{-6}$.
\end{itemize}

Performance is evaluated using the relative cost increase $\rho = \frac{q_{\text{tot}}(\mathbf{x}_{\text{att}})-q_{\text{tot}}(\mathbf{x}_{\text{cl}})}{q_{\text{tot}}(\mathbf{x}_{\text{cl}})}$, where $\mathbf{x}_{\text{cl}}$ and $\mathbf{x}_{\text{att}}$ denote the minimum cost flow assignments before and after the attack. Results are averaged over 15 independent trials per budget level.

\begin{figure}[H]
    \centering
    \setlength{\abovecaptionskip}{3pt} 
    \setlength{\belowcaptionskip}{0pt}
    
    \begin{minipage}[t]{0.49\textwidth} 
        \centering
        \includegraphics[width=\linewidth]{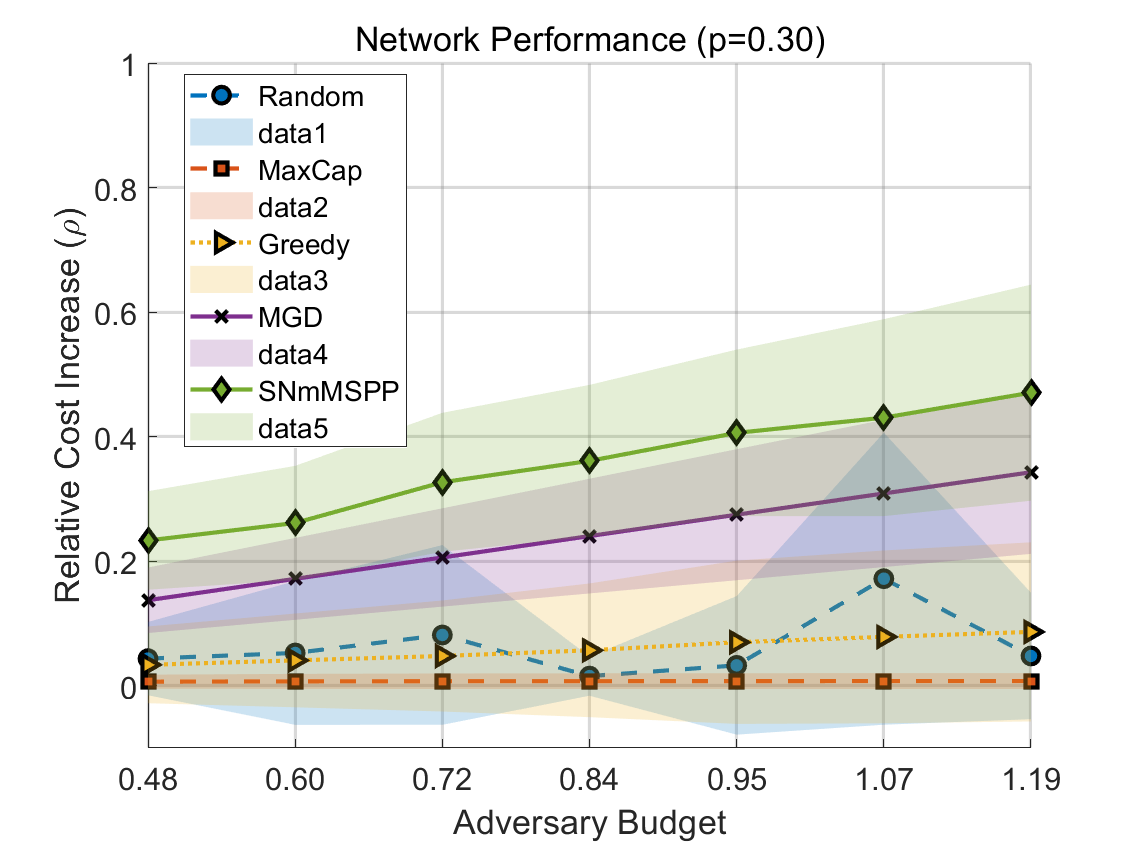}
        \subcaption{p=0.3, n=10, $\sigma=0.001$}
    \end{minipage}%
    \hfill
    \begin{minipage}[t]{0.49\textwidth}
        \centering
        \includegraphics[width=\linewidth]{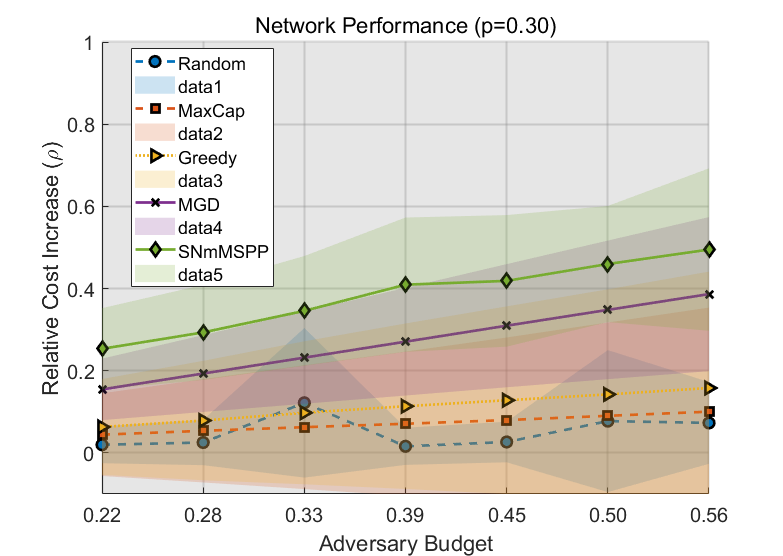}
        \subcaption{p=0.3, n=10, $\sigma=0.01$}
    \end{minipage}
    
    \vspace{4pt} 
    
    \begin{minipage}[t]{0.49\textwidth}
        \centering
        \includegraphics[width=\linewidth]{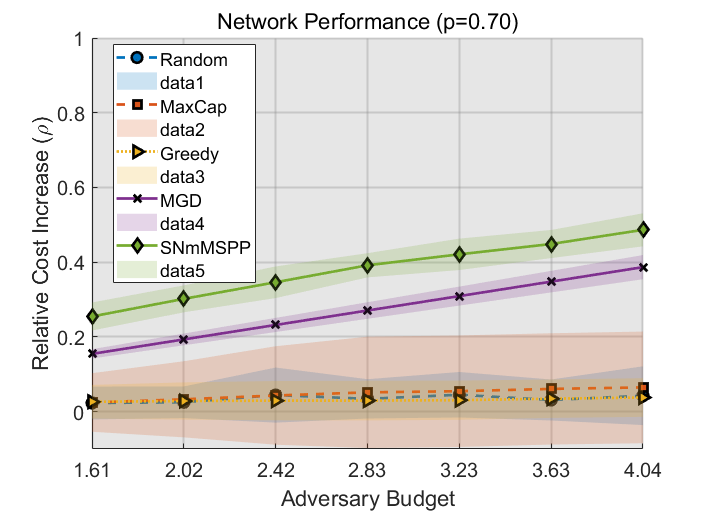}
        \subcaption{p=0.7, n=10, $\sigma=0.001$}
    \end{minipage}%
    \hfill
    \begin{minipage}[t]{0.49\textwidth}
        \centering
        \includegraphics[width=\linewidth]{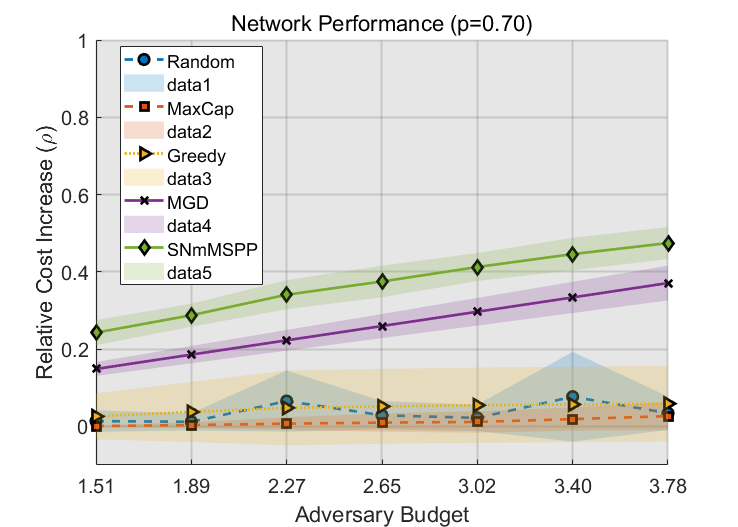}
        \subcaption{p=0.7, n=10, $\sigma=0.01$}
    \end{minipage}
    
    \caption{Comparison of network flow performance under different parameter settings}
    \label{fig:total}
\end{figure}

\subsection{ Linear regression}

In this section, we consider the well-known linear regression problem \cite{du2019linear} with joint linear constraints as follows 
\begin{equation} \tag{6.1}
\begin{aligned}
\min_{x \in \R^n} \max_{y \in \R^m} \quad & f(x, y) = \frac{1}{m} \left[ -\frac{1}{2} \| y \|^2 
- b^T y + \frac{1}{N}\sum_{i=1}^N y^T K_i x \right] + \frac{\lambda}{2} \| x \|^2 \\
\text{subject to} \quad & A x + B y + c = 0_p,
\end{aligned}
\end{equation}  
where we generate a comprehensive dataset of 500,000 random instances for experimental evaluation. The matrix dimensions are set to $n = 100$, $m = 100$, and $p = 50$, with all entries of matrices $K_i \in \R^{m \times n}$, $A \in \R^{p \times n}$, and $B \in \R^{p \times m}$ drawn from a Gaussian distribution $\mathcal{N}(0, \sigma)$ with mean $\mu = 0$ and standard deviation $\sigma = 0.01$. In the following experiments, let $n = m$, $b = 0$, $c = 0$, and $\lambda = 1/m$.

For this experiments, we set the number of outer iterations $S$ in \hyperref[alg:SNmMSPP]{Algorithm \ref{alg:SNmMSPP}} to $S= 30$. For \hyperref[alg:ssn_generic]{Algorithm \ref{alg:ssn_generic}}, we use $\hat{\gamma} = 0.4$, $\eta = 10^{-7}$, $\rho = 0.9$, $\tau = 0.1$, $\tau_1 = 0.01$, $\tau_2 = 1 \times 10^{-6}$ and terminate if $\|\nabla\mathcal{I}(\xi^j)\| \leq 10^{-14}$,i.e.,$\varepsilon_{\text{sub}}=10^{-14}$.

 The relative gradient percentage used in convergence analysis is calculated relative to the initial gradient norm.


\begin{figure}[H]
    \centering
    \begin{subfigure}[b]{0.49\textwidth}
        \centering
        \includegraphics[width=\linewidth]{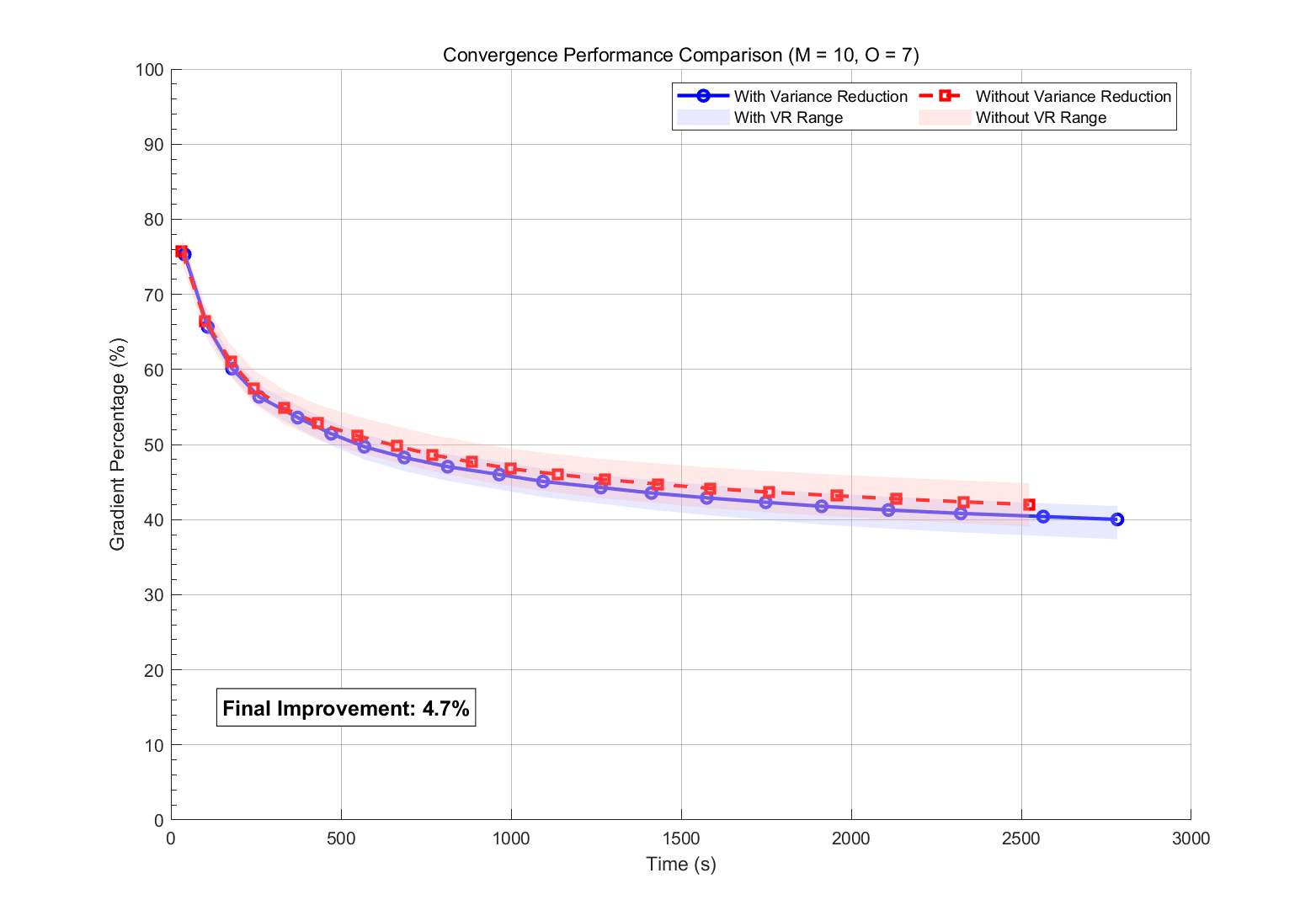}
        \caption{Comparison of step size ($m=10, O=7$)}
        \label{fig:sub1}
    \end{subfigure}
    \hfill
    \begin{subfigure}[b]{0.49\textwidth}
        \centering
        \includegraphics[width=\linewidth]{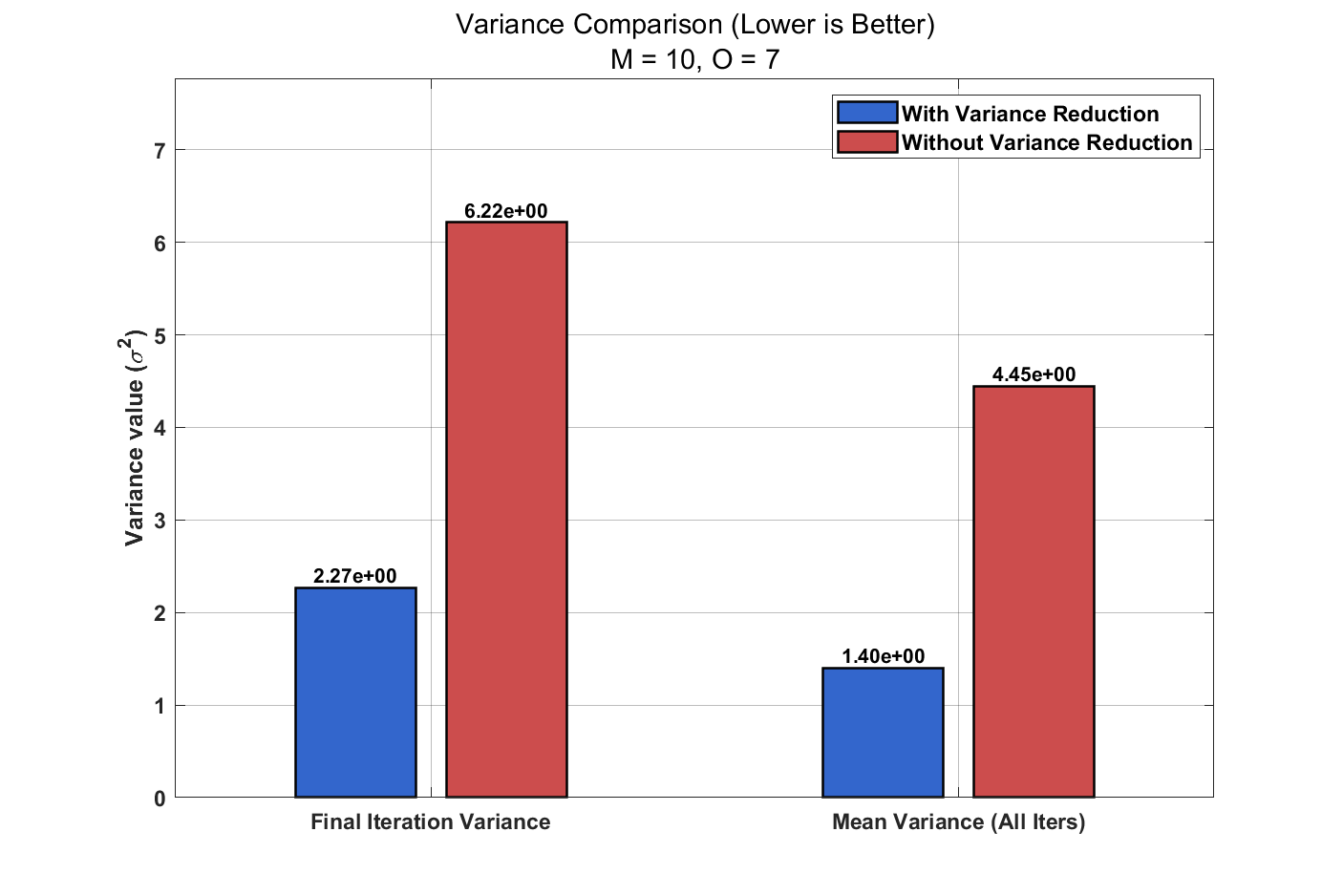}
        \caption{Comparison of inner iterations ($m=10, O=7$)}
        \label{fig:sub2}
    \end{subfigure}
    
    \caption{Performance comparison under different settings. Note that $O$ represents the number of repeated experiments.}
    \label{fig:performance_comparison}
\end{figure}

\begin{figure}[H]
    \centering
    \begin{subfigure}[b]{0.49\textwidth}
        \centering
        \includegraphics[width=\linewidth]{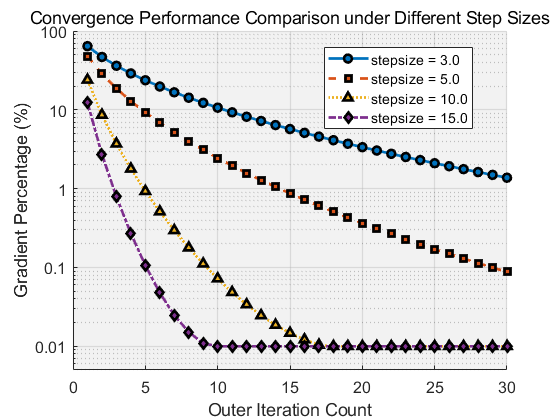}
        \caption{Comparison of step size ($m=15, b_0=5$)}
        \label{fig:sub3}
    \end{subfigure}
    \hfill
    \begin{subfigure}[b]{0.49\textwidth}
        \centering
        \includegraphics[width=\linewidth]{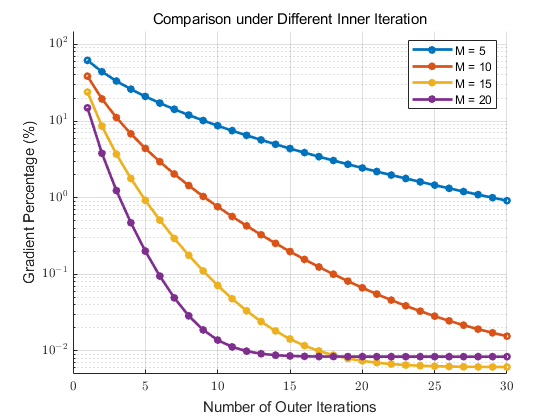}
        \caption{Comparison of inner iterations ($\alpha=5, b_0=5$)}
        \label{fig:sub4}
    \end{subfigure}
    
    \caption{Performance comparison under different settings . It is observed that excessive inner iterations do not strictly imply better overall efficiency due to the trade-off between calculation precision and computational cost.}
\end{figure}

\bibliographystyle{amsplain}
\bibliography{mybib} 
\end{document}